\DeclareMathOperator{\I}{Id}
\newtheorem{Theorem}{Theorem}[section]
\newtheorem{Definition}{Definition}[section]
\newtheorem{Remark}{Remark}[section]
\newtheorem{Proposition}{Proposition}[section]
\newtheorem{Lemma}{Lemma}[section]
\newtheorem{Corollary}{Corollary}[section]
\title{ Varieties of group-graded algebras
	of proper central exponent greater than two }
\author{Francesca S. Benanti}
\address{Dipartimento di Matematica e Informatica, Universit\`a di Palermo, Via Archirafi 34, 90123, Palermo, Italy}
\email{francescasaviella.benanti@unipa.it}
\author{Angela Valenti}
\address{Dipartimento di Ingegneria, Universit\`a di Palermo, Viale delle Scienze, 90128, Palermo, Italy}
\email{angela.valenti@unipa.it}
\thanks{The authors are partially supported by INDAM-GNSAGA of Italy
	and  by  Fondo Finalizzato alla Ricerca dell'Università degli Studi di Palermo  (FFR) – Anno 2025.}
\subjclass[2020]{Primary 16R10, 16R50; Secondary 16P90, 16W50.}
\keywords{central polynomial, polynomial identity, growth, graded algebra.}
\begin{document}

	\begin{abstract}
		Let $F$ be a field of characteristic zero and let $ \mathcal V $ be a variety of associative $F$-algebras graded by a finite abelian group $G$.  To  a variety $ \mathcal V $  is associated a numerical sequence called the sequence  of proper central $G$-codimensions, $c^{G,\delta}_n(\mathcal V), \, n \ge 1.$ Here $c^{G,\delta}_n(\mathcal V)$ is the dimension of the space of multilinear proper central $G$-polynomials in $n$ fixed variables of  any algebra $A$ generating the variety	$\mathcal V.$
      Such sequence gives information on the growth of the proper central $G$-polynomials of $A$ and in \cite{LMR} it was  proved that $exp^{G,\delta}(\mathcal V)=\lim_{n\to\infty}\sqrt[n]{c_n^{G,\delta}(\mathcal V)}$ exists and is an integer called the proper central $G$-exponent.

		The aim of this paper is to characterize the varieties of associative
		$G$-graded algebras  of proper central $G$-exponent greater than two.
        To this end we construct a finite list of $G$-graded algebras and we prove that $exp^{G,\delta}(\mathcal V) >2$ if and only if at least one of the algebras belongs to $\mathcal V$.

        Matching this result with the characterization of the varieties of almost polynomial growth given in \cite{GLP},  we obtain a characterization of the varieties of proper central $G$-exponent equal to two.
	\end{abstract}
	
	\maketitle
	
	\section{Introduction}
	
	Let $G $ be a finite abelian group and let $A$ be an associative $G$-graded algebra over a field $F$ of characteristic zero. If we denote by $F\langle X,G\rangle$ the free associative $G$-graded algebra of countable rank over $F,$  then a $G$-polynomial $f \in F\langle X,G\rangle $ is a central $G$-polynomial of $A$ if $f$ takes values in $Z(A),$ the center of $A.$ In case $f$ takes a non zero values in $A,$ we say that $f$ is a proper central $G$-polynomial otherwise we say that $f$ is a $G$-polynomial identity of $A.$
	Notice that if $G$ is trivial we obtain the well-known notions of central polynomial, proper central polynomial and polynomial identity of the associative algebra $A.$
	
	It is known that $M_n(F),$ the algebra of $ n \times n$ matrices over $F,$ has proper central polynomials, as proved independently by  Formanek and Razmyslov in \cite{Formanek} and \cite{Razmyslov} confirming a famous conjecture of Kaplansky.  This result is highly non-trivial since even if an algebra has a non-zero center, the existence of proper central
	polynomials is not granted. For instance, it is well-known (see \cite{GZ2018}) that
	the algebra of upper block triangular matrices has no proper central polynomials.
	
	In this paper in order to get information about how many proper central $G$-polynomials a given $G$-graded algebra has,  we consider, for any $n \ge 1 ,$ the space $P_n^G$ of multilinear $G$-polynomials  in $n$ fixed variables. \  We attach to it three numerical sequences: $c^G_n(A)$, the dimension of  $P_n^G$ modulo the $G$-polynomial identities of A,
	$c^{G,z}_n(A)$,  the dimension of  $P_n^G$ modulo the central $G$-polynomials of $A$  and
		$$
	c_n^{G,\delta}(A)= c^G_n(A)- c_n^{G,z}(A),
	$$
	the dimension of the space of multilinear proper central $G$-polynomials in $n$ fixed variables.
	They are called the $G$-graded, central $G$-graded and proper central $G$-graded codimension sequence of $A$, respectively.
	
		The asymptotic behavior of the $G$-graded codimension sequence was extensively studied in the past years. In fact, in \cite{GR} it was proved that if $A$ satisfies a non-trivial ordinary polynomial identity (i.e., $A$ is a PI-algebra), then $c^G_n(A)$ is exponentially bounded, moreover in \cite{AG}, \cite{AGL} and \cite{GL} the authors proved the existence and the integrability of the $G$-exponent, $exp^G(A)=\lim_{n\to\infty}\sqrt[n]{c_n^{G}(A)}$, also for non-abelian groups. This result was generalized in the setting of $H$-module algebras, where $ H$ is a semisimple finite dimensional Hopf algebra (see \cite{Ka}).
	
	In \cite{LMR} it was proved that for a PI-algebra $A$, also $exp^{G,z}(A)=\lim_{n\to\infty}\sqrt[n]{c_n^{G,z}(A)}$ and $exp^{G,\delta}(A)=\lim_{n\to\infty}\sqrt[n]{c_n^{G,\delta}(A)}$ exist and are integers. They are called the central $G$-exponent and the proper central $G$-exponent of $A$,  respectively.
	
	In case  $G$ is the trivial group we obtain the ordinary codimension,
	the central codimension,
	and the proper central codimension.
	The existence and the integrability of the exponent,  the central exponent and  the proper central exponent was proved in \cite{GZ1}, \cite{GZ2}, \cite{GZ2018}, \cite{GZ2019}.
	
	In the language of the varieties, if $\mathcal V=var^G(A)$ is the variety of $G$-graded algebras generated by a $G$-graded algebra $A$, then we write $exp^G(\mathcal V)=exp^G(A)$,  $exp^{G,z}(\mathcal V)=exp^{G,z}(A)$ and $exp^{G,\delta}(\mathcal V)=exp^{G,\delta}(A)$.
	
	Recently in \cite{GLP} the authors classified the algebras for which the sequence of proper central codimensions has almost polynomial growth. Moreover they obtained a classification also in the case of group graded algebras. A characterization of the varieties of  associative algebras of proper central exponent greater than two was obtained in \cite{BV}. As a consequence the authors achieved a description for the varieties with proper central exponent equal to two.

	The purpose of this paper is to characterize the varieties of $G$-graded algebras having proper central $G$-exponent
	greater than two. To this end, we shall explicitly exhibit a  list of
	algebras in order to prove that a variety $\mathcal{V}$ has proper central $G$-exponent
	greater than two if and only if at least one of these  belongs to $ \mathcal{V}.$  As a consequence of this result and  of Theorem 3.1 in \cite{GLP},
	we shall obtain a characterization of the varieties of proper central $G$-exponent equal to two.

	\section{Preliminaries and basic results}
	Let $F$ be a field of characteristic zero and  $G$ a finite abelian group.
	Recall that an algebra $A$ is a $G$-graded algebra if $A$ can be written as a direct sum of vector spaces $A=\bigoplus_{g\in G}A^{(g)}$
	such that $A^{(g)}A^{(h)}\subseteq A^{(gh)}$, for all $g,h \in G$. The subspaces $A^{(g)}$ are the  homogeneous components of $A$ of degree $g$ and  an element $a\in A^{(g)}$ is called homogeneous of degree $g=|a|_A$.
	Let $F\langle X \rangle$ be the free associative algebra on a countable set $X=\{x_1, x_2, \ldots \}$. One can define on such an algebra a $G$-grading in a natural way: write $X = \cup_{g\in G}X^{(g)}$, where $X^{(g)}= \{x_1^g, x_2^g, \ldots\}$ are disjoint sets of elements of homogeneous degree $g$. Let $\mathcal{F}^{(g)}$ be the subspace of $F\langle X\rangle$ spanned by all monomials in the variables of $X$ having homogeneous degree $g$.
	Then $F\langle X,G\rangle= \oplus_{g \in G}\mathcal{F}^{(g)}$ is the free associative $G$-graded algebra of countable rank over $F$.
	
	Write $G = \{g_1,\ldots, g_s\}$.
	A $G$-graded polynomial, or simply a $G$-polynomial, $f =f(x_1^{g_1} ,\ldots,x_{t_1}^{g_1},\ldots,x_1^{g_s} ,\ldots,x_{t_s}^{g_s})$ of $F\langle X,G\rangle$ is a $G$-polynomial identity, or simply a graded identity, of $A$, and we write $f \equiv 0$, if
	$$f(a_1^{g_1} ,\ldots,a_{t_1}^{g_1},\ldots,a_1^{g_s} ,\ldots ,a_{t_s}^{g_s})=0, $$
	for all $a_i^{g_i} \in A^{(g_i)}$ , $t_i \geq 0$, for all $1 \leq i \leq s$.

	Let $Id^G(A) = \{f \in F\langle X,G\rangle \, | \, f \equiv 0 \, \,\mathrm{on} \, \, A\}$ be the ideal of graded identities of $A$. It is easily seen that $Id^G(A)$ is a $T_G$-ideal, i.e., it is an ideal invariant under all graded endomorphisms of $F\langle X,G\rangle$. Notice that if, for some $i \geq 1,$ we set $x_i = x_i^{g_1} + \cdots + x_i^{g_s},$  then $ F\langle X\rangle$ is naturally embedded into $F\langle X,G\rangle$ so that we can look at the ordinary
	identities of A as a special kind of graded identities. Since char $F =0$ then  $Id^G(A)$ is determined by the multilinear $G$-polynomials it contains. Thus, for all $n \geq 1$, one can define
	$$
	P^G_n =span_F\{x_{\sigma(1)}^{g_{i_1}}\cdots x_{\sigma(n)}^{g_{i_n}} \, | \, \sigma \in S_n, \, g_{i_1} ,\ldots,g_{i_n} \in G\}
	$$
	as the space of multilinear $G$-polynomials in the graded variables $x_{1}^{g_{i_1}},\ldots ,x_{n}^{g_{i_n}}$ with $g_{i_j}\in G$. Here $S_n$ stands for the symmetric group of order $n.$
	
	An element $f(x_1^{g_1} ,\ldots,x_{t_1}^{g_1},\ldots,x_1^{g_s} ,\ldots,x_{t_s}^{g_s})$ of $F\langle X,G\rangle$ is a central $G$-polynomial of $A$ if, for all homogeneous elements $a_{_i}^{g_j} \in A^{(g_j)},$ $f(a_1^{g_1} ,\ldots,a_{t_1}^{g_1},\ldots,a_1^{g_s} ,\ldots ,a_{t_s}^{g_s}) \in Z(A).$  If $f$ takes only the zero value, then it is clear that $f$ is a $G$-polynomial identity of $A$, otherwise we say that $f$ is a proper central $G$-polynomial of $A.$
    Let $Id^{G,z}(A) = \{f \in F\langle X,G\rangle  \, | \,  f \,\,\, \mathrm{is\, a \, central} \,\,  G-\mathrm{polynomial \, of \, } A \}.$ Then $Id^{G,z}(A)$ is a $T_G$-space of $F\langle X,G\rangle ,$ i.e., a vector space invariant under all $G$-graded endomorphisms of the free $G$-graded algebra.

	It is well known that in characteristic zero every $T_G$-ideal is completely determined by its multilinear polynomials. Thus it is reasonable to consider the
	quotient spaces, for $n=1, 2, \ldots$
	$$
	P_n^G(A)=\frac{P^G_n }{P^G_n \cap Id^G(A)}, \quad P^{G,z}_n (A)= \frac{P^G_n }{P^G_n \cap Id^{G,z}(A)}, \quad P^{G,\delta}_n (A)=\frac{P^G_n \cap Id^{G,z}(A)}{P^G_n \cap Id^{G}(A)}.
	$$
	Notice that $P^{G,\delta}_n (A)$ corresponds to the space of multilinear proper central $G$-polynomials of $A$ in $n$ fixed homogeneous variables. For $n \ge 1$ let's denote by
	$$c^G_n(A)= \mathrm{dim}_F P_n^G(A), \quad c^{G,z}_n(A)=\mathrm{dim}_F P_n^{G,z}(A),  \quad c^{G,\delta}_n(A)= \mathrm{dim}_F P_n^{G,\delta}(A)$$
	the sequences of $G$-codimensions, central $G$-codimensions and proper central $G$-codimensions of $A$, respectevely.
	
	Since the above  codimension sequences do not change by extension of the base field, from now on we assume that $F$ is algebraically closed.
	
	In \cite{GL}, \cite{AGL}, \cite{AG} and \cite{LMR} it was proved that if $A$ is a $G$-graded PI-algebra
	the following three limits
	$$
	exp^G(A)=\lim_{n\to\infty}\sqrt[n]{c_n^G(A)},\quad  exp^{G,z}(A)=\lim_{n\to\infty}\sqrt[n]{c_n^{G,z}(A)}, \quad exp^{G,\delta}(A)=\lim_{n\to\infty}\sqrt[n]{c_n^{G,\delta}(A)}
	$$
	exist and are non negative integers called the $G$-exponent, the central $G$-exponent and the proper central $G$-exponent  of the algebra $A$, respectively. We also call $exp^{G,\delta}(A)$ simply the  $(G, \delta)$-exponent.
	They can be calculated in an explicit way, and here we need to recall how to compute  the $G$-exponent and the proper central $G$-exponent.
	
	First we have to establish a well-known general setting that one can use in order to reduce the
	problem of computing the $T_G$-ideal of graded identities of any $G$-graded algebra to that
	of the so-called Grassmann envelope of a suitable finite dimensional $G \times \mathbb{Z}_2$-graded
	algebra, where $\mathbb{Z}_2$ is the cyclic group of order two in additive notation.
	Let $E$ denote the infinite dimensional Grassmann algebra generated by the elements
	$1, e_1, e_2, \ldots$ subject to the relations $e_ie_j = -e_j e_i$, for all $i,
	j.$ $E$ has a natural structure of superalgebra $  E = E^{(0)} \oplus E^{(1)},$  where
	$E^{(0)} = \mathrm{span}_F\{e_{i_1} \cdots e_{i_{2k}} \, | \, 1 \le {i_1} < \cdots < {i_{2k}}, k \ge 0\}$
	and
	$E^{(1)} = \mathrm{span}_F \{e_{i_1} \cdots e_{i_{2k+1}} \, | \, 1 \le {i_1} < \cdots < {i_{2k+1}}, k \ge 0\}.$
	Moreover, let $A = \bigoplus_ {(g,i) \in G \times \mathbb{Z}_2 }A^{(g,i)}$
	be a $G \times \mathbb{Z}_2$-graded algebra. Then $A$ has an
	induced $\mathbb{Z}_2$-grading, $A = A^{(0)} \oplus A^{(1)},$ where $A^{(0)} = \oplus_{g \in G }A^{(g,0)}$
	and $A^{(1)} = \oplus_{g \in G }A^{(g,1)}$ and
	an induced $G$-grading $A = \bigoplus_{g \in G }A^{(g)}$
	where $A^{(g)} = A^{(g,0)} \oplus A^{(g,1)}$, for all $g \in G$.
	The Grassmann envelope of $A$ is defined as
	$E(A)=(E^{(0)} \otimes A^{(0)}) \oplus (E^{(1)} \otimes A^{(1)}),$
	it has a natural
	$G$-grading induced by the one of $A$ by setting $E(A)^{(g)} = (E^{(0)}\otimes A^{(g,0)}) \oplus (E^{(1)}\otimes A^{(g,1)}),$ for all $g \in G$.
	
	A famous theorem of Kemer
	assert that an arbitrary algebra satisfying a non-trivial polynomial identity
	over a field of characteristic zero has the same identities as the Grassmann
	envelope $E(B)$ of a finite dimensional $\mathbb{Z}_2$-graded algebra $B$ (see \cite{kemer}). This
	result was extended in the setting of graded algebras  in the following theorem proved separately by Aljadeff
	and Belov in \cite{AKB} and Sviridova in \cite{Sv}.

	\begin{Theorem}  Let $A$ be a $G$-graded algebra satisfying a non-trivial ordinary polynomial
		identity. Then there exists a finite dimensional $G \times \mathbb{Z}_2$-graded algebra $B$ such that
		$Id^G(A) = Id^G(E(B)).$
	\end{Theorem}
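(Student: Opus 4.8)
The plan is to follow the two-step strategy of Kemer's proof of the ungraded statement (\cite{kemer}), carried out in the $G$-graded category. Write $\Gamma = Id^G(A)$ for the $T_G$-ideal of graded identities and study the multilinear components $P_n^G/(P_n^G \cap \Gamma)$, decomposed according to the $G$-degrees carried by the $n$ variables. The first task is to build graded analogues of Kemer's combinatorial invariants: a \emph{graded Kemer index} attached to $\Gamma$, which records, through the shapes of the Young diagrams occurring in these components, the maximal sizes of sets of variables on which a non-identity can be made alternating, together with the homogeneous degrees of those variables; and \emph{graded Kemer polynomials}, that is, multilinear non-identities that realize this index by being alternating on the prescribed number of large folds. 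Since $\mathrm{char}\, F = 0$, the entire theory is governed by these multilinear data.

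The second step is an affine representability theorem: every \emph{finitely generated} $G$-graded PI-algebra $A_0$ is $G$-graded PI-equivalent to a finite-dimensional $G$-graded algebra $B_0$, i.e. $Id^G(A_0) = Id^G(B_0)$. Here I would mimic Kemer's argument: the graded Kemer index isolates finitely many critical building blocks; a Noetherianity argument for the associated module structures produces a finite basis ``up to the index''; and the \emph{Phoenix property}---that a non-identity of extremal index survives, in a suitable form, modulo larger $T_G$-ideals---allows one to assemble these blocks into a single finite-dimensional graded model $B_0$ with the same graded identities as $A_0$. Keeping the $G$-grading coherent throughout, so that all alternations and substitutions respect homogeneous degrees, is the technically heavy part of this step.

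The third step reduces the general (non-affine) case to the affine one through the Grassmann envelope, and this is where the group $G \times \mathbb{Z}_2$ enters. By a super-trick one produces a finitely generated $G \times \mathbb{Z}_2$-graded algebra $A'$ with $Id^G(A) = Id^G(E(A'))$: the extra $\mathbb{Z}_2$-grading together with the sign rule of $E$ absorbs exactly the alternating, ``odd'' behaviour that an infinite-dimensional algebra such as $E$ itself displays but that no finite-dimensional $G$-graded algebra can. Regarding $A'$ as an affine $G \times \mathbb{Z}_2$-graded PI-algebra, the affine representability theorem of the second step---applied with the group $G \times \mathbb{Z}_2$---yields a finite-dimensional $G \times \mathbb{Z}_2$-graded algebra $B$ with the same $G \times \mathbb{Z}_2$-graded identities as $A'$. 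Since the passage to the Grassmann envelope depends only on these identities, we obtain $Id^G(E(B)) = Id^G(E(A')) = Id^G(A)$, as required.

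I expect the main obstacle to be the affine representability of the second step: establishing that the graded Kemer index behaves well---that it drops strictly once one factors out a Kemer polynomial, and that it is attained by a finite-dimensional model---and verifying the graded Phoenix property. By contrast, the reduction of the third step is conceptually routine once the ungraded super-trick is transcribed, so that the affine case carries essentially all the structural weight of the theorem.
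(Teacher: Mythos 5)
This statement is not proved in the paper at all: it is quoted as a known theorem, due separately to Aljadeff--Belov \cite{AKB} and Sviridova \cite{Sv}, and is used as a black box throughout. So there is no internal proof to compare yours against; the only meaningful comparison is with the published proofs your outline is implicitly describing.

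As a description of the architecture of those proofs, your three-step plan is accurate: a graded Kemer index and graded Kemer polynomials, an affine (finitely generated) representability theorem proved by a Noetherianity-plus-Phoenix argument, and a reduction of the general case to the affine one via the Grassmann envelope and the passage from $G$ to $G\times\mathbb{Z}_2$. The final bookkeeping step is also right: one needs the standard lemma that $Id^{G\times\mathbb{Z}_2}(A')=Id^{G\times\mathbb{Z}_2}(B)$ forces $Id^G(E(A'))=Id^G(E(B))$, which follows from the sign-twist correspondence between graded identities of an algebra and of its Grassmann envelope. However, what you have written is a table of contents, not a proof. Every load-bearing assertion is deferred: that the graded Kemer index is well defined and attained by a finite-dimensional model, that it strictly drops after factoring out a Kemer polynomial, that the graded Phoenix property holds, and that the super-trick produces an \emph{affine} $G\times\mathbb{Z}_2$-graded algebra $A'$ with $Id^G(A)=Id^G(E(A'))$ are each theorems requiring substantial work (indeed, each occupies a significant portion of \cite{AKB} and \cite{Sv}), and you explicitly flag them as ``to be mimicked'' rather than carrying them out. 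If the intent is to cite the result, your summary is a fair account of why it is true; if the intent is to prove it, the gap is essentially the entire content of step two, and no reader could reconstruct the argument from what you have given.
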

	It is worth mentioning that in \cite{AKB} the result was proved also for non-abelian groups.
	
	Notice that if $A$ is a finite dimensional $G$-graded algebra, then we can regard $A$ as a $G\times \mathbb{Z}_2$-graded algebra with trivial $\mathbb{Z}_2$-grading
	and we get that   $\I^G(A)= \I^G(E^{(0)}\otimes A)= \I^G(E(A))$. Thus we may consider $A$ instead of $E(A)$.
	
	The computation of  the $G$-exponent and  the   $(G, \delta)$-exponent is as follows. According to the previous theorem  we may assume that $A =E(B)$ with $B$ a finite dimensional  $G \times \mathbb{Z}_2$-graded algebra.
	By the Wedderburn-Malcev decomposition (see \cite{CR}), we write $B = \bar B + J,$ where  $\bar B$ is a maximal semisimple $G \times \mathbb{Z}_2$-graded
	subalgebra of $B$ and  $J = J(B)$ is the Jacobson radical of $B,$
	that is a graded ideal (see \cite{CM} and \cite{SV}). Also we can write $\bar B= B_1\oplus \cdots \oplus B_q$, as a decomposition into simple $G \times \mathbb{Z}_2$-graded algebras.
	
	Recall that a classification of finite dimensional simple $G\times \mathbb{Z}_2$-graded algebras  is given in the following theorem proved by Bahturin, Sehgal and Zaicev in \cite{BSZ}.
	
	\begin{Theorem} \label{BSZ}
		Let $B$ be a finite dimensional simple $G \times \mathbb{Z}_2$-graded algebra. Then there exist a subgroup $H$ of $G \times \mathbb{Z}_2$, a 2-cocycle $\alpha: H\times H \to F^*,$ where the action of $H$ on $F$ is trivial, an integer $m$ and an $m$-tuple $(g_1=1_G, g_2, ..., g_m) \in (G \times \mathbb{Z}_2)^m$ such that $B$ is isomorphic as $G \times \mathbb{Z}_2$-graded algebra to
		$$R = F^{\alpha}H \otimes M_m(F),$$
		where $R^{(g)}= \mathrm{span}_F\{b_h\otimes e_{ij} \, | \, g=g^{-1}_ihg_j\}.$ Here $b_h \in F^{\alpha}H$ is a representative of $h \in H.$
	\end{Theorem}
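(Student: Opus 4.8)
The plan is to prove this structure theorem for finite-dimensional graded-simple algebras in three stages: reduce to the graded-semisimple case, invoke a graded Wedderburn--Artin decomposition to realise $B$ as a matrix algebra over a graded division algebra carrying an elementary-type grading, and finally identify that graded division algebra with a twisted group algebra. Throughout write $\Gamma = G\times\mathbb{Z}_2$ for the (finite abelian) grading group and recall that $F$ is algebraically closed of characteristic zero.

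First I would record that the Jacobson radical $J(B)$ is a graded ideal of $B$ (as recalled above, following \cite{CM} and \cite{SV}). Since $J(B)$ is nilpotent it is a proper graded ideal, so graded simplicity of $B$ forces $J(B)=0$; being finite dimensional, $B$ is then semisimple as an ordinary algebra and, over $F=\bar F$, a finite product of matrix algebras permuted compatibly by the $\Gamma$-grading. In particular $B$ is graded-semisimple, and as it is graded-simple it is a single graded-simple block to which a graded version of Wedderburn--Artin applies. Concretely, I would pick a minimal graded left ideal $V$ of $B$; its graded endomorphism algebra $D=\mathrm{End}_B(V)$ is a graded \emph{division} algebra, since a graded Schur-type argument shows every nonzero homogeneous endomorphism is invertible. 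Graded density then gives an isomorphism of $\Gamma$-graded algebras $B\cong \mathrm{End}_D(V)\cong M_m(D)$ with $m=\dim_D V$, where the grading on $M_m(D)$ is the elementary grading determined by a tuple $(g_1=1_G,g_2,\dots,g_m)\in\Gamma^m$ recording the degree shifts of a fixed homogeneous $D$-basis of $V$: the matrix unit $e_{ij}$ contributes the shift $g_i^{-1}(\cdot)\,g_j$ relative to the degrees coming from $D$.

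It then remains to classify the finite-dimensional graded division algebra $D$. Its support $H=\{h\in\Gamma : D^{(h)}\neq 0\}$ is a subgroup of $\Gamma$, since the product of two nonzero homogeneous (hence invertible) elements is again nonzero and inverses preserve homogeneity. The trivial component $D^{(1)}$ is a finite-dimensional division algebra over the algebraically closed field $F$, so $D^{(1)}=F$; picking $0\neq b_h\in D^{(h)}$ for each $h\in H$ then forces every component to be one-dimensional, $D^{(h)}=F\,b_h$. Writing $b_h b_k=\alpha(h,k)\,b_{hk}$ with $\alpha(h,k)\in F^{*}$, associativity of the product is precisely the $2$-cocycle identity for $\alpha$, whence $D\cong F^{\alpha}H$ as graded algebras. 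Substituting into $B\cong M_m(D)$ gives $B\cong F^{\alpha}H\otimes M_m(F)=:R$, and reading off the elementary grading produces exactly $R^{(g)}=\mathrm{span}_F\{b_h\otimes e_{ij}\mid g=g_i^{-1}hg_j\}$, as claimed.

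I expect the main obstacle to be the graded Wedderburn--Artin step. One must verify not only that a graded-simple (hence graded-semisimple) algebra is a full matrix algebra over its graded endomorphism division algebra, but, more delicately, that the \emph{induced} grading on $M_m(D)$ is the elementary one twisted by a tuple $(g_1,\dots,g_m)$. Tracking how the homogeneous components of the minimal graded ideal $V$ distribute among the matrix positions — and fixing the normalisation $g_1=1_G$ — is what forces the explicit relation $g=g_i^{-1}hg_j$, and this bookkeeping is the technical heart of the argument; the twisted-group-algebra classification of $D$, by contrast, is essentially elementary once graded-simplicity has been converted into the statement that $D$ is a graded division algebra over $\bar F$.
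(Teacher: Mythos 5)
This theorem is not proved in the paper at all: it is quoted verbatim as the Bahturin--Sehgal--Zaicev classification from \cite{BSZ}, so there is no internal proof to compare against. Your sketch is correct and is essentially the standard argument of that reference --- graded simplicity kills the (graded) Jacobson radical, a graded Wedderburn--Artin/density argument yields $B\cong M_m(D)$ with an elementary grading shift coming from a homogeneous $D$-basis of a minimal graded left ideal, and over the algebraically closed field $F$ the graded division algebra $D$ has one-dimensional homogeneous components supported on a subgroup $H$, with structure constants forming a $2$-cocycle, whence $D\cong F^{\alpha}H$.
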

	
	In order to estimate the
	$G$-exponent and the  proper central $G$-exponent of $E(B),$ we recall that a semisimple
	$G \times \mathbb{Z}_2$-graded subalgebra
	$C=B_{i_1}\oplus \cdots \oplus B_{i_k}$, where the $B_{i_h}$'s are all distinct,  is an admissible subalgebra of $B$
	if $B_{i_1}J \cdots J B_{i_k}\ne 0$. Moreover we say that $C$ is a centrally admissible subalgebra of $B$,
	if there exists a proper central $G$-polynomial $f$ of $E(B)$ having a non-zero evaluation involving  at least one element of each
	$E(B_{i_h})$, $1\le h \le k$.
	In \cite{GL} it was proved that $exp^G(E(B))$ is the maximal dimension of an admissible subalgebra of $B$ and in \cite{LMR}  it was showed that $exp^{G,\delta}(E(B))$ is the maximal dimension of a
	centrally admissible subalgebra of $B$.

	Recall that if $\mathcal V=var^G(A)$ is the variety of $G$-graded algebras generated by a $G$-graded algebra $A$, then we write $exp^G(\mathcal V)=exp^G(A)$
	and $exp^{G,\delta}(\mathcal V)=exp^{G,\delta}(A)$. Hence the $G$-growth and the $(G,\delta)$-growth of $\mathcal V$ are the growth of the $G$-codimensions  and the growth  of the proper central  $G$-codimensions of the algebra $A$, respectively.  It is worth to mention that the  numerical sequences $c^G_n(A)$ and $c^{G,\delta}_n(A)$  either grow exponentially or are polynomially bounded.
	Hence no intermediate growth is allowed.
	Moreover
	a variety $\mathcal V$ of $G$-graded algebras has almost polynomial $G$-growth if $\mathcal V$ grows exponentially but any
	proper subvariety grows polynomially. Analogously a variety $\mathcal V$ of $G$-graded algebras has almost polynomial $(G,\delta)$-growth if $exp^{G,\delta}(\mathcal V) \ge 2$  and for any proper subvarieties $\mathcal W \subset \mathcal V$ we have that $exp^{G,\delta}(\mathcal V) \le 1.$
	
	In \cite{V} the varieties of $G$-graded algebras of almost polynomial $G$-growth were classified and in \cite{IM} the authors characterized the varieties of $G$-graded
	algebras of $G$-exponent greater than 2. Recently in \cite{GLP}  the varieties of $G$-graded algebras having almost polynomial $(G,\delta)$-growth were classified.

	\section{Graded algebras of small $(G,\delta)$-exponent}
	
	In this section we will introduce some suitable $G$-graded algebras that
	will allow us to prove the main result of this paper.
	We start by establishing some notations and definitions.
	
	Let $M_n(A)$ be the algebra of $n \times n$ matrices over an $F$-algebra $A$ and let $\underline{g} =(g_1,\ldots,g_n)\in G^n$ be an arbitrary $n$-tuple of elements of $G$. Then $\underline{g} =(g_1,\ldots,g_n)$ defines an elementary $G$-grading on $M_n(A)$
	if  the matrix units
	$e_{ij}$ are homogeneous elements of degree $g_i^{-1}g_j.$
	We shall denote by $M_n(A)^{(g_1,\ldots,g_n)}$ or simply by $M_n(A)^{\underline{g}}$ the  algebra $M_n(A)$ with elementary $G$-grading induced by $\underline{g}$.
	Now, if $\underline{g}'$ is obtained from $\underline{g}$ by multiplying  all the elements by a fixed element of $G$, we obtain $M_n(A)^{\underline{g}}= M_n(A)^{\underline{g}'}$.
	Hence, throughout the paper, we shall consider gradings on $M_n(A)^{\underline{g}}$
	with $g_1=1_G$.
	Moreover, if $B$ is a graded subalgebra of $M_n(A)^{\underline{g}}$ the induced grading on $B$ will also be called elementary.
	
	Now let us consider $M_2(F)$, the algebra of $2 \times 2$ matrices endowed with an
	elementary $G\times \mathbb{Z}_2$-grading. It is clear that the possible $G\times \mathbb{Z}_2$-gradings are given by the pairs $((1_G,0),(g,i))$, with $g\in G$, $i\in \{0,1\}$,  then any elementary $G\times \mathbb{Z}_2$-grading is uniquely determined by the
	homogeneous degree of $e_{12}$ and thus we denote by $M_2(F)^{g,i}$ the algebra of $2 \times 2$
	matrices with elementary $G\times \mathbb{Z}_2$-grading induced by $(g, i).$ Finally we define  $\mathcal{A}^{g,i}_1$
	as the Grassmann envelope of $M_2(F)^{g,i}$.
	We observe that  $\mathcal{A}^{g,0}_1=E(M_2(F)^{g,0})$ satisfies the same $G$-polynomial identities of $M_2(F)^{(1,g)}$ and $\mathcal{A}^{g,1}_1=E(M_2(F)^{g,1})$ satisfies the same $G$-polynomial identities of  $M_{1,1}(E)^{(1,g)}=\begin{pmatrix}
		E^{(0)} & E^{(1)}   \\
		E^{(1)}  & E^{(0)}
	\end{pmatrix}^{(1,g)}$.
	
	Let $C_n=\langle g \rangle$ denote the cyclic subgroup generated by an element $g\in G$ of order $n$.
	The group algebra $FC_n$ has a natural $C_n$-grading $FC_n = \oplus_{i=0}^{n-1} FC_n^{(g^i)}$, where $FC_n^{(g^i)}= F{g^i}$,
	$0 \leq i \leq n-1$. It is clear that $FC_n$  can be regarded as a $G$-graded algebra by
	setting $FC_n^{(g')} = 0$, for all $g'\notin \langle g \rangle$. For any prime $p$ greater than two, we denote by $\mathcal{A}_2^p$ the graded algebra $F{C}_p$.
	Moreover let $\mathcal{A}_3=F{C}_4$.
	
	If $g \in G$ is an element of order 4, let $C_{4,1}$ be the cyclic subgroup of $C_4 \times \mathbb{Z}_2$ generated by $(g, 1)$ and $\mathcal{A}_4=E(FC_{4,1})$ the Grassmann envelope of
	$FC_{4,1}$ endowed with its natural $C_{4}$-grading.
	
	Furthermore, if there exist $g, h \in G$ distinct elements of order 2, we let
	$H_{i,j}=<(g,i),(h,j)>$
	be the subgroup of $G \times \mathbb{Z}_2$ generated by $(g, i)$ and $(h, j),$ $i,j \in \mathbb{Z}_2.$ We denote by $\mathcal{A}_5^{i,j}=E(F^\alpha H_{i,j})$  the Grassmann envelope of  $F^\alpha H_{i,j}$, for some cocycle $\alpha$.

	If $\underline{g}=(1_G, g_1,\ldots ,g_{n-1})\in G^n$ and $A$ is an $F$-algebra  we denote by $UT_n^{\textbf{g}}(A)$ the algebra of $n \times n$ upper-triangular matrices endowed with the elementary $G$-grading induced by $\textbf{g}=(1_G, g_1,g_1g_2, g_1g_2g_3, \ldots ,g_1\cdots g_{n-1})$.

	We denote
	by $ \mathcal{A}_6^{g_1,g_2,g_3}$ the subalgebra of $UT_4(F)^{\textbf{g}}$ with $a_{11}=a_{44}$  and $\underline{g}=(1_G, g_1,g_2 ,g_3) \in G^4$, and by $\mathcal{A}_7^{g_1,g_2,g_3, g_4}$  the subalgebra of $UT_5(F)^{\textbf{g}}$ with $a_{11}=a_{55}=0$ and $\underline{g}=(1_G, g_1,g_2 ,g_3,g_4) \in G^5.$
	Moreover, $\mathcal{A}_{8}^{g_1,g_2} $ will be  the graded subalgebra of $UT_3^{\textbf{g}}(E)$ of elements $\begin{pmatrix}
		a & x & y  \\
		0& b & z  \\
		0 & 0 & a
	\end{pmatrix}$ with $ b\in E^{(0)}$ and $\underline{g}=(1_G, g_2, g_1) \in G^3$.
	\noindent
	\smallskip

	In what follows we shall
	denote by $1=1_G$ the unit element of $G,$   by $e$ the unit element of $G\times \mathbb{Z}_2$ and by $g^0$ and $g^1$ the elements  $(g,0)$, $(g,1) \in G\times \mathbb{Z}_2,$ respectively.

	For an ordered set of positive integers $(d_1,\ldots,d_m)$, let
	$
	UT(d_1,\ldots ,d_m)$
	denote the algebra of  upper block-triangular
	matrices over $F$ of size $d_1,\ldots,d_m$ (see \cite{GZbook}).
	Clearly the algebra of upper block triangular matrices  admits an elementary grading. In fact, the embedding of such an algebra into a full matrix algebra with an elementary grading makes it a homogeneous subalgebra.
	We consider the following algebras

	$$
	M= \left \{\begin{pmatrix}
		0 & a & b & d & e  \\
		0 & f & g & h & l \\
		0 & 0 & u & v & m \\
		0 & 0 & v & u & n \\
		0 & 0 & 0 & 0& 0
	\end{pmatrix} \,| \, \, a, \ldots,n \in F  \right \} \subseteq UT(2,3),
	$$
	$$
	N= \left \{\begin{pmatrix}
		0 & a & b & d & e  \\
		0 & u & v & f & g \\
		0 & v & u & h & l \\
		0 & 0 & 0 & m & n \\
		0 & 0 & 0 & 0& 0
	\end{pmatrix} \,| \,\,  a, \ldots,n \in F\right \}\subseteq UT(3,2).
	$$
	
	\noindent
We denote by $\mathcal{A}_{9}^{g_1,g_2,g_3}=E(M^{(e,g_1^0 ,(g_1g_2)^0, (g_1g_2)^{1},(g_1g_2g_3)^0)})$ and $\mathcal{A}_{10}^{g_1,g_2,g_3}= E(N^{(e,g_1^0 ,g_1^1, (g_1g_2)^0,(g_1g_2g_3)^0)})$ the Grassmann envelopes of    $M$ and $N$ with elementary $G\times \mathbb{Z}_2$-grading induced by $(e,g_1^0 ,(g_1g_2)^0, (g_1g_2)^{1},(g_1g_2g_3)^0)$ and by $(e,g_1^0 ,g_1^1, (g_1g_2)^0,(g_1g_2g_3)^0)$, respectively.

	Moreover, let
	$$
	P= \left \{\begin{pmatrix}
		a & b & d & e  \\
		0 & u & v  & f \\
		0 & v & u & g \\
		0 & 0 &  0  & a
	\end{pmatrix} \, | \,\, a, \ldots,g \in F \right \}\subseteq UT(1,2,1).
	$$
	
We denote by	$\mathcal{A}_{11}^{g_1,g_2}$ and $\mathcal{A}_{12}^{g_1,g_2}$  the Grassmann envelope of $P$ with elementary $G\times \mathbb{Z}_2$-grading induced by $(e,g_1^0 ,g_1^1, (g_1g_2)^0)$ and $(e,g_1^0 ,g_1^1, (g_1g_2)^1)$, respectively.

	\medskip
	We now collect the previous $G$-graded algebras in the following list:
	
	\bigskip
	\begin{enumerate}
		\item[ ]  $\mathcal{A}^{g,i}_1=E(M_2(F)^{g,i})$,   $g \in G$, $i\in \mathbb{Z}_2$;
		\medskip
		\item[ ] $\mathcal{A}_2^p=F{C}_p$, $p$ prime such that $p\, | \, \mid G \mid$;
		\medskip
		\item[ ] $\mathcal{A}_3=F{C}_4$;
		\medskip
		\item[ ] $\mathcal{A}_4=E(F{C}_{4,1})$;
		\medskip
		\item[ ] $\mathcal{A}_5^{i,j}=E(F^\alpha H_{i,j})$,  for some cocycle $\alpha$ and  $i,j \in \mathbb{Z}_2$;
		\medskip
		\item[ ]
		$ \mathcal{A}_6^{g_1,g_2,g_3}=\{ (a_{ij}) \in UT_4(F)^{\textbf{g}} \, | \, a_{11}=a_{44}\},$  where $\underline{g}=(1_G, g_1,g_2 ,g_3) \in G^4$;
		
		\medskip
		\item[ ]
		$\mathcal{A}_7^{g_1,g_2,g_3, g_4}= \{ (a_{ij}) \in UT_5(F)^{\textbf{g}} \, | \, a_{11}=a_{55}=0,\}$ where  $\underline{g}=(1_G, g_1,g_2 ,g_3,g_4) \in G^5$;
		
		\medskip
		\item[ ]
		$\mathcal{A}_{8}^{g_1,g_2} =  \{ (a_{ij}) \in UT_3(E)^{\textbf{g}} \, | \, a_{11}=a_{33},\ a_{22}\in E^{(0)} \}$,  where $\underline{g}=(1_G, g_2, g_1) \in G^3$;

		\medskip
		\item[ ]
		$\mathcal{A}_{9}^{g_1,g_2,g_3} = E(M^{(e,g_1^0 ,(g_1g_2)^0, (g_1g_2)^{1},(g_1g_2g_3)^0)})$,  $g_i \in G$,   $1\leq i\leq 3$;

		\medskip
		\item[ ]
		$\mathcal{A}_{10}^{g_1,g_2,g_3} = E(N^{(e,g_1^0 ,g_1^1, (g_1g_2)^0,(g_1g_2g_3)^0)})$, $g_i \in G$, $1\leq i\leq 3$;

		\medskip
		\item[ ]
		$\mathcal{A}_{11}^{g_1,g_2} =E(P^{(e,g_1^0 ,g_1^1, (g_1g_2)^0)})$,  $g_i \in G$,    $1\leq i\leq 2$;
		
		\medskip
		\item[ ]
		$\mathcal{A}_{12}^{g_1,g_2} =E(P^{(e,g_1^0 ,g_1^1, (g_1g_2)^1)})$, $g_i \in G$,    $1\leq i\leq 2$.
	\end{enumerate}
	
	\bigskip
	Let's remark that, since $\mathcal{A}_2^p$ is a simple commutative $G$-graded algebra, $exp^{G,\delta}(\mathcal{A}_2^p)=\dim_F FC_{p}=p$. Also, it is clear that $exp^{G,\delta}(\mathcal{A}^{g,i}_1)=
	exp^{G,\delta}(\mathcal{A}_3)=exp^{G,\delta}(\mathcal{A}_4)=exp^{G,\delta}(\mathcal{A}_5^{i,j})=4$.

	Moreover,  if we regard the proper central polynomials constructed in \cite{BV}
	as $G$-polynomials, then we get  the following

	\begin{Lemma} \label{exponent Ai}
		Let $\mathcal{A}$ be a $G$-graded algebra belonging to  $\{\mathcal{A}_6^{g_1,g_2,g_3}, \mathcal{A}_7^{g_1,g_2,g_3, g_4}, \mathcal{A}_{8}^{g_1,g_2}, \mathcal{A}_{9}^{g_1,g_2,g_3}, \mathcal{A}_{10}^{g_1,g_2,g_3}, \mathcal{A}_{11}^{g_1,g_2}, \mathcal{A}_{12}^{g_1,g_2}\} $, then $exp^G(\mathcal{A})=exp^{G,\delta}(\mathcal{A})=3$.
	\end{Lemma}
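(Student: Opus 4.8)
The plan is to read off both exponents from the structural characterizations recalled in Section 2. By the result of \cite{GL}, if $\mathcal{A}=E(B)$ with $B$ a finite dimensional $G\times\mathbb{Z}_2$-graded algebra, then $exp^G(\mathcal{A})$ equals the maximal dimension of an admissible subalgebra of $B$, and by \cite{LMR} the quantity $exp^{G,\delta}(\mathcal{A})$ equals the maximal dimension of a centrally admissible subalgebra of $B$. Since $c_n^{G,\delta}(\mathcal{A})=c_n^G(\mathcal{A})-c_n^{G,z}(\mathcal{A})\le c_n^G(\mathcal{A})$, one has $exp^{G,\delta}(\mathcal{A})\le exp^G(\mathcal{A})$ for free; hence it suffices to prove the two inequalities $exp^G(\mathcal{A})\le 3$ and $exp^{G,\delta}(\mathcal{A})\ge 3$, which together force $3\le exp^{G,\delta}(\mathcal{A})\le exp^G(\mathcal{A})\le 3$ and give the claim for every $\mathcal{A}$ in the list.

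First I would, for each algebra, fix the finite dimensional $G\times\mathbb{Z}_2$-graded algebra $B$ with $\mathcal{A}=E(B)$ (taking $B$ with trivial $\mathbb{Z}_2$-grading when $\mathcal{A}$ is already defined over $F$, as for $\mathcal{A}_6$ and $\mathcal{A}_7$) and write down its Wedderburn--Malcev decomposition $B=\bar B+J$ with $\bar B=B_1\oplus\cdots\oplus B_q$ split into simple $G\times\mathbb{Z}_2$-graded components. In every case the total dimension of $\bar B$ is $3$: for instance for $\mathcal{A}_6^{g_1,g_2,g_3}$ one has $\bar B=F(e_{11}+e_{44})\oplus Fe_{22}\oplus Fe_{33}$ and $J$ the strictly upper triangular part, while for the envelope algebras $\mathcal{A}_9,\dots,\mathcal{A}_{12}$ the symmetric block $\left(\begin{smallmatrix} u & v\\ v & u\end{smallmatrix}\right)$ yields a two dimensional graded-simple component that is counted together with the remaining diagonal components. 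For the upper bound $exp^G(\mathcal{A})\le 3$ I would then check, using the elementary/block structure, that any chain $B_{i_1}JB_{i_2}J\cdots JB_{i_k}$ of \emph{distinct} simple components which is nonzero has $\dim(B_{i_1}\oplus\cdots\oplus B_{i_k})\le 3$; e.g.\ in $\mathcal{A}_6$ the product $(e_{11}+e_{44})\,e_{12}\,e_{22}\,e_{23}\,e_{33}=e_{13}\ne 0$ links all three components and no larger admissible subalgebra exists.

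For the lower bound $exp^{G,\delta}(\mathcal{A})\ge 3$ I would exhibit a centrally admissible subalgebra of dimension $3$, and this is exactly where the hypothesis of the lemma is used: the explicit multilinear proper central polynomials built in \cite{BV} for the ungraded versions of these algebras are reinterpreted as $G$-polynomials by assigning to each of their variables the homogeneous $G\times\mathbb{Z}_2$-degree of the matrix unit (respectively Grassmann tensor) on which it is evaluated. The witnessing substitution then produces a nonzero central corner element involving an element of each simple component; in $\mathcal{A}_6$, for example, the chain $(e_{11}+e_{44})\,e_{12}\,e_{22}\,e_{23}\,e_{33}\,e_{34}\,(e_{11}+e_{44})=e_{14}$ lands in $Z(\mathcal{A}_6)$ precisely because the constraint $a_{11}=a_{44}$ makes $e_{14}$ commute with the diagonal, and it involves all of $B_1,B_2,B_3$. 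This realizes $\bar B$ as a centrally admissible subalgebra of dimension $3$, so $exp^{G,\delta}(\mathcal{A})\ge 3$, and combining the three inequalities gives $exp^G(\mathcal{A})=exp^{G,\delta}(\mathcal{A})=3$.

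The main obstacle is concentrated in the lower bound step for the Grassmann envelope algebras $\mathcal{A}_8,\dots,\mathcal{A}_{12}$. There one must verify two things: that the degree assignment turning a \cite{BV} polynomial into a $G$-polynomial is consistent with the prescribed elementary $G\times\mathbb{Z}_2$-grading, so that the substituted elements are genuinely homogeneous of the required degrees; and that the resulting corner evaluation is still central in the envelope $E(B)$ and nonzero there. The second point is the delicate one, since centrality in $E(B)$ must be tested against all homogeneous elements and, when the corner sits in the odd component, the anticommutation of the Grassmann generators enters; this is resolved by observing that the corner matrix unit annihilates the whole radical on both sides, so that its products with all odd (hence radical) elements vanish and centrality holds regardless of sign. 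Once centrality and non-vanishing are confirmed for each algebra, the dimension bookkeeping of the first two steps is routine.
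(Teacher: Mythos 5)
Your proposal is correct and takes essentially the same approach as the paper, which states this lemma without a detailed proof, presenting it as an immediate consequence of regarding the proper central polynomials constructed in \cite{BV} as $G$-polynomials. Your write-up merely makes explicit the two bounds the paper leaves implicit: $exp^G(\mathcal A)\le \dim \bar B=3$ via the admissible-subalgebra characterization from \cite{GL}, and $exp^{G,\delta}(\mathcal A)\ge 3$ via the centrally admissible subalgebra witnessed by the \cite{BV} polynomials, combined with the trivial inequality $exp^{G,\delta}(\mathcal A)\le exp^G(\mathcal A)$.
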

	
	\bigskip
	
	Let's now consider the algebras $
	B_1=\left\{\left(
	\begin{array}{cc}
		0 & a \\
		0 & b  \\
	\end{array}
	\right) \,| \,  a,b \in E^{(0)}\right\}$ and
	$B_2=\left\{\left(
	\begin{array}{ccc}
		u & v &  a\\
		v & u& b \\
		0 & 0  & 0 \\
	\end{array}
	\right) \, | \, u,a \in E^{(0)}, v,b\in E^{(1)} \right\}.$
	Let $\langle f_1, \ldots, f_r \rangle_{T_G}$ denote the $T_G$-ideal of $F\langle X,G \rangle$ generated by the $G$-polynomials $ f_1, \ldots, f_r .$
	We have the following
	
	\begin{Lemma} \label{TidealB} Let $g\neq 1$. Then
		\hfill
		\begin{itemize}
			\item[1)]
			$Id^G(B_1^{(1,g)}) = \langle [x_1^1,x_2^1], x_1^gx_2^g, x_1^1x_2^g, x_1^h \rangle_{T_G}$,    $h \ne 1, g$;
			\item[2)]
			$Id^G(B_2^{(1,1,g)}) = \langle [x_1^1,x_2^1,x_3^1], x_1^gx_2^g, x_1^gx_2^1, x_1^h \rangle_{T_G},$     $h \ne 1, g$;
			\item[3)] $ Id^G(B_1^{(1,1)})=\langle x_1^1[x_2^1,x_3^1], x_1^h\rangle_{T_G}$,     $h \ne 1$;
			\item[4)] $ Id^G(B_2^{(1,1,1)})=\langle [x_1^1,x_2^1,x_3^1]x_4^1, x_1^h\rangle_{T_G}$,     $h \ne 1$.
		\end{itemize}
	\end{Lemma}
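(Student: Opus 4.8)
The plan is to prove each of the four equalities by the usual two–inclusion argument. Since $\mathrm{char}\,F=0$, a $T_G$-ideal is determined by its multilinear components, so throughout I work inside the spaces $P_n^G$; write $I$ for the $T_G$-ideal generated by the displayed polynomials in each item. The inclusion $\langle\cdots\rangle_{T_G}\subseteq Id^G$ is checked by evaluating each generator on homogeneous elements of the matrix–over–Grassmann models, using the following structural facts. In $B_1$ the entries lie in the commutative central subalgebra $E^{(0)}$, and among $e_{22}$ (degree $1$ in the grading $(1,g)$) and $e_{12}$ (degree $g$) the only surviving products are $e_{22}e_{22}$ and $e_{12}e_{22}=e_{12}$, while $e_{22}e_{12}=e_{12}e_{12}=0$. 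In $B_2$ the top–left block $\left(\begin{smallmatrix} u & v\\ v & u\end{smallmatrix}\right)$ with $u\in E^{(0)},\,v\in E^{(1)}$ is isomorphic to $E$ via $u+v$, and the third column is a square–zero two–sided ideal which annihilates everything on its right and on which the block acts only on the left, so $B_2\cong E\ltimes E$. From this one reads off $x_1^h\equiv0$ for the excluded degrees and: in (1) the relations $[x_1^1,x_2^1]$, $x_1^1x_2^g$, $x_1^gx_2^g$; in (2) that the degree-$1$ part is $\cong E$, whence $[x_1^1,x_2^1,x_3^1]\equiv0$, together with $x_1^gx_2^g\equiv x_1^gx_2^1\equiv0$; in (3) $x_1^1[x_2^1,x_3^1]\equiv0$; and in (4) that $[x_1^1,x_2^1,x_3^1]$ lands in the square–zero right–annihilator column, giving $[x_1^1,x_2^1,x_3^1]x_4^1\equiv0$.

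For the reverse inclusion I would exploit that, by the easy inclusion, $I\subseteq Id^G(B)$ induces a surjection $P_n^G/(P_n^G\cap I)\twoheadrightarrow P_n^G(B)$, so it suffices to produce a spanning set of $P_n^G$ modulo $I$ that remains linearly independent modulo $Id^G(B)$; a dimension squeeze then forces $P_n^G\cap I=P_n^G\cap Id^G(B)$ for all $n$, i.e. $I=Id^G(B)$. The spanning sets come from reducing multilinear monomials modulo $I$, where first the relations $x_1^h$ discard all variables of excluded degree. In (1) a surviving monomial is a product of degree-$1$ letters in increasing order, optionally preceded by a single degree-$g$ letter, because $x_1^1x_2^g$ and $x_1^gx_2^g$ force a degree-$g$ letter to occur first and at most once while $[x_1^1,x_2^1]$ sorts the degree-$1$ letters; in (3) the relation $x_1^1[x_2^1,x_3^1]$ lets me reorder every letter after the first, leaving only the choice of the first letter. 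In (2) the degree-$1$ letters reduce to the Grassmann–type normal form dictated by $[x_1^1,x_2^1,x_3^1]$ (products of sorted letters together with disjoint pairwise commutators), with at most one degree-$g$ letter appended at the very end since a degree-$g$ letter annihilates anything to its right; in (4) I would pass to the proper (commutator) expansion and use that the relation forbids any factor to the right of a commutator of length at least three, trimming the admissible products to an explicit finite family.

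Linear independence modulo $Id^G(B)$ is then verified by explicit evaluations in the models: degree-$1$ letters are sent to suitable block matrices and degree-$g$ letters to the column (or to $e_{12}$ in $B_1$), while the odd generators $e_i\in E^{(1)}$ are used to realise the commutator factors, since the commutator of two block elements produces the nonzero central even product $e_ie_j$. The main obstacle is precisely the $B_2$ cases (2) and (4): one must show the listed relations suffice to reach the normal form — in particular that no further proper identity is hidden — and then match the count exactly, which requires careful bookkeeping of the Grassmann–type normal form of the degree-$1$ subalgebra (recall $c_n(E)=2^{n-1}$) against its interaction with the degree-$g$ column and the right–annihilator structure. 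Choosing the Grassmann evaluations that genuinely separate these normal forms is the delicate computational point on which the whole argument rests.
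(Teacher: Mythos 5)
Your proposal follows essentially the same route as the paper: verify the easy inclusion by direct evaluation on the matrix models, then reduce an arbitrary multilinear polynomial modulo the candidate $T_G$-ideal to the same normal forms (a single leading degree-$g$ letter times sorted degree-$1$ letters for $B_1^{(1,g)}$; the Grassmann normal form of sorted letters times disjoint commutators, with at most one trailing degree-$g$ letter, for $B_2^{(1,1,g)}$), and kill the coefficients by substituting $e_{22}$, $e_{12}$, $e_{11}+e_{22}$ and odd Grassmann multiples of $e_{12}+e_{21}$ exactly as the paper does. The only cosmetic difference is that for parts 3) and 4) the paper simply cites \cite[Lemma 3.2]{BV} rather than redoing the ungraded normal-form argument you sketch.
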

	
	\begin{proof} It is easy to see that $B_1^{(1,g)}$ satisfies the graded identities  $[x_1^1,x_2^1], x_1^gx_2^g, x_1^1x_2^g$, $x^h$, with $h\neq 1, g$.
		
		Write  $I=\langle [x_1^1,x_2^1], x_1^gx_2^g, x_1^1x_2^g, x^h\rangle_{T_G}$ and let $f=f(x_1^1,\ldots, x_{t_1}^1,x_1^{g_2},\ldots,x_{t_2}^{g_2}, \ldots,,x_1^{g_s},\ldots,x_{t_s}^{g_s})$
		be a multilinear $G$-polynomial in $Id^G(B_1^{(1,g)})$.
		We wish to show that, modulo $I$, $f$ is the zero polynomial.
		
		It is clear that, modulo $I$, we can write $f\equiv f_1(x_1^1,\ldots, x_{t_1}^1) +
		f_2(x_1^g, x_1^{1},\ldots,x_{t_1}^{1})$.  By the multihomogeneity of $T_G$-ideals, it follows that $f_1$
		and $f_2$ are both graded identities of $B_1^{(1,g)}$.
		Since $[x_1^1,x_2^1]\in I$, we obtain that $f_1 \equiv \alpha x_1^1 \cdots x_{t_1}^1$ for some $\alpha \in F.$  By substituting $x_1^1 = \cdots = x_{t_1}^1  = e_{22}$ we have $\alpha =0$ and, so, $f_1 \equiv 0$ modulo $I$.

		Now, modulo $I,$ $f_2 \equiv\beta x_1^g x_1^1 \cdots x_{t_1}^1,$ for some $\beta \in F$.  If we consider the following substitutions $x_1^g=e_{12}$ and $x_1^1 = \cdots = x_{t_1}^1  = e_{22}$, we get $f_2=\beta e_{12}$. Hence $\beta = 0,$  $f_2 \equiv 0$ modulo $I$, and we are done.
		
		Now let $I=\langle [x_1^1,x_2^1,x_3^1], x_1^gx_2^g, x_1^gx_2^1, x^h\rangle_{T_G}.$ It is easy to see that $I \subseteq  Id^G(B_2^{(1,1,g)}).$
		Let $f=f(x_1^1,\ldots, x_{t_1}^1,x_1^{g_2},\ldots, \\ x_{t_2}^{g_2}, \ldots,,x_1^{g_s},\ldots,x_{t_s}^{g_s})$
		be a multilinear polynomial in $Id^G(B_2^{(1,1,g)})$.
		Write $f\equiv f_1(x_1^1,\ldots, x_{t_1}^1) +
		f_2( x_1^{1},\ldots,x_{t_1}^{1},x_1^g)$ where
		$$
		f_1(x^1_1, \ldots , x^1_{t_1})\equiv\sum \beta_{\underline i,\underline j}x^1_{i_1}\cdots x^1_{i_{t_1-2m}}[x^1_{j_1},x^1_{j_2}]\cdots [x^1_{j_{2m-1}},x^1_{j_{2m}}]
		$$
		and
		$$
		f_2(x^1_1, \ldots , x^1_{t_1},x_1^g)\equiv\sum \gamma_{\underline r,\underline s}x^1_{r_1}\cdots x^1_{r_{t_1-2q}}[x^1_{s_1},x^1_{s_2}]\cdots [x^1_{s_{2q}},x^1_{s_{2q}}] x_1^g,
		$$
		with $\beta_{\underline i,\underline j}, \gamma_{\underline r,\underline s} \in F$, $\underline i =(i_1, \ldots, i_{t_1-2m})$,
		$i_1<\cdots < i_{t_1-2m}$, $\underline j=(j_1, \ldots,  j_{2m})$,
		$j_1<\cdots < j_{2m}$,
		$\underline r =(r_1, \ldots, r_{t_1-2q})$,
		$r_1<\cdots < r_{t_1-2q}$, $\underline s=(s_1, \ldots,  s_{2q})$ and
		$s_1<\cdots < s_{2q}$.
		
		By the multihomogeneity of $T_G$-ideals, it follows that $f_1$
		and $f_2$ are both graded identities of $B_2^{(1,1,g)}$. Suppose that $f_1 \not \equiv 0$ modulo $I$.
		
		We choose the minimal $m$ such that $\beta_{\underline i,\underline j} \neq 0$ and fix one of the corresponding $(t_1-2m, 2m)$-tuples \\ $(i_1,\ldots ,i_{t_1-2m}; j_1, \ldots, j_{2m})$.
		We make the substitution $x_{i_1}=\cdots = x_{i_{t_1-2m}}= e_{11}+e_{22}$, $x_{j_t}=a_t(e_{12}+e_{21})$, for all $t=1, \ldots ,  2m$,   where  the $a_t$'s are distinct generators of $E^{(1)}$.
		Clearly we obtain, for a suitable coefficient $\beta_{\underline i,\underline j} \in F$,
		
		$$
		\beta_{\underline i,\underline j} [a_1, a_2]\cdots  [a_{2m-1}, a_{2m}] (e_{11}+ e_{22})=2^m\beta_{\underline i,\underline j} a_1\cdots a_{2m}(e_{11}+ e_{22})\neq 0.
		$$
		
		\noindent
		Since $[e_{11}+e_{22},e_{12}+e_{21}]=0$, the minimality of $m$ gives that all other summand of $f_1$ vanish and we get a contradiction.
		Similar considerations hold for $f_2$.
		Thus $f \equiv 0$ modulo $I$, and we are done.
		
		The proof of $3)$ and $4)$ follows by \cite[Lemma 3.2]{BV}.
	\end{proof}
	
	\bigskip
	
	Next consider the algebras
	$C_1=\left\{\left(
	\begin{array}{ccc}
		0 & a & b\\
		0 & u & v \\
		0 & v & u \\
	\end{array}
	\right) \, | \,  u,a \in E^{(0)}, v,b\in E^{(1)} \right\}$ and $
	C_2=\left\{\left(
	\begin{array}{cc}
		a & b \\
		0 & 0  \\
	\end{array}
	\right) \, | \,  a,b \in E^{(0)}\right\}$.

    In a similar vein as in the proof of the previous lemma  we obtain the following
	
	\bigskip
	
	\begin{Lemma} \label{TidealC} If $ g \ne 1,$ then
		\hfill
		\begin{itemize}
			\item[1)]
			$Id^G(C_1^{(1,g,g)}) = \langle [x_1^1,x_2^1,x_3^1], x_1^gx_2^g, x_1^1x_2^g, x_1^h \rangle_{T_G},$ $h\neq 1,g$;
			\item[2)]
			$Id^G(C_2^{(1,g)}) = \langle [x_1^1,x_2^1], x_1^gx_2^g, x_1^gx_2^1, x_1^h \rangle_{T_G}$, $h\neq 1,g$;
			\item[3)] $Id^G(C_1^{(1,1,1)})=\langle x_1^1 [x_2^1,x_3^1,x_4^1], x_1^h\rangle_{T_G}$, $h\neq 1$;
			\item[4)] $Id^G(C_2^{(1,1)})=\langle [x^1_1,x_2^1]x_3^1, x_1^h\rangle_{T_G}$, $h\neq 1.$
		\end{itemize}
	\end{Lemma}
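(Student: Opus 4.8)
The plan is to follow the scheme of Lemma~\ref{TidealB}: for each item prove the inclusion $I \subseteq Id^G$ by inspecting the multiplication table of the homogeneous components, and then prove the reverse inclusion by reducing an arbitrary multilinear graded identity to a short normal form modulo $I$ and annihilating the surviving coefficients through explicit substitutions in matrix units and Grassmann generators. Items 3) and 4), where the grading is trivial, I would instead deduce from the ordinary statements in \cite[Lemma 3.2]{BV}.

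First, for the easy inclusions I would record the components. For $\underline g=(1,g,g)$ the degree-$g$ part of $C_1$ is $E^{(0)}e_{12}\oplus E^{(1)}e_{13}$ and the degree-$1$ part is $\{u(e_{22}+e_{33})+v(e_{23}+e_{32}): u\in E^{(0)},\, v\in E^{(1)}\}$; one checks $C_1^{(g)}C_1^{(g)}=0$ and $C_1^{(1)}C_1^{(g)}=0$, and that the commutator of two degree-$1$ elements equals $2vv'(e_{22}+e_{33})$, a scalar multiple of the block identity and hence central in $C_1^{(1)}$, so $[x_1^1,x_2^1,x_3^1]\equiv 0$. The analogous table for $C_2^{(1,g)}$, where $C_2^{(1)}=E^{(0)}e_{11}$ is commutative, $C_2^{(g)}=E^{(0)}e_{12}$, and $C_2^{(g)}C_2^{(1)}=C_2^{(g)}C_2^{(g)}=0$, yields the generators in 2). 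These verifications are routine.

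For the reverse inclusion in 1), let $f$ be multilinear in $Id^G(C_1^{(1,g,g)})$; modulo $I$ every variable of degree $\neq 1,g$ vanishes, and using $x_1^gx_2^g$ and $x_1^1x_2^g$ I would show that a surviving monomial carries at most one degree-$g$ letter forced to the leftmost position (any degree-$1$ letter immediately preceding a degree-$g$ letter, or two degree-$g$ letters anywhere, produce a sub-word lying in $I$). The relation $[x_1^1,x_2^1,x_3^1]$ then lets me collect the degree-$1$ letters into products of variables times products of central commutators, so that $f\equiv f_1(x_1^1,\dots,x_{t_1}^1)+f_2(x_1^g,x_1^1,\dots,x_{t_1}^1)$ exactly as for $B_2^{(1,1,g)}$. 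By multihomogeneity $f_1$ and $f_2$ are separately identities. Choosing the minimal number $m$ of brackets with nonzero coefficient and substituting $e_{22}+e_{33}$ for the unbracketed degree-$1$ variables, $a_t(e_{23}+e_{32})$ with distinct $a_t\in E^{(1)}$ for the bracketed ones, and $e_{12}$ for $x_1^g$, the identities $(e_{23}+e_{32})^2=e_{22}+e_{33}$ and $[a_s(e_{23}+e_{32}),a_t(e_{23}+e_{32})]=2a_sa_t(e_{22}+e_{33})$ make the minimal term survive as a nonzero multiple of $a_1\cdots a_{2m}(e_{22}+e_{33})$ (respectively of $a_1\cdots a_{2m}e_{12}$ for $f_2$), forcing every coefficient to vanish. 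Item 2) is the same but easier: the degree-$1$ part being commutative, $f\equiv \alpha\,x_1^1\cdots x_{t_1}^1+\beta\,x_1^1\cdots x_{t_1}^1 x_1^g$ (the single degree-$g$ letter now pushed to the right by $x_1^gx_2^1$), and the substitutions $x_i^1=e_{11}$, $x_1^g=e_{12}$ give $\alpha=\beta=0$.

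Finally, for 3) and 4) the tuples $(1,1,1)$ and $(1,1)$ place the whole algebra in $G$-degree $1$, so each $x_1^h$ with $h\neq 1$ is an identity and the rest is ordinary: the ungraded $T$-ideals of $C_1$ and $C_2$ are generated by $x_1[x_2,x_3,x_4]$ and $[x_1,x_2]x_3$ respectively by \cite[Lemma 3.2]{BV}, giving the stated $T_G$-ideals after identifying degree-$1$ variables with ungraded ones. I expect the main obstacle to be the normal-form step in 1): rigorously verifying that, modulo $I$, an arbitrary multilinear identity collapses to the two-summand shape with the degree-$g$ letter forced to the left (i.e.\ that no other monomial type survives the relations), and then organizing the minimality-of-$m$ substitution so that precisely one coefficient is isolated at each stage.
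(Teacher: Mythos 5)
Your proposal is correct and matches the paper's intent exactly: the paper proves this lemma only by the remark that it follows ``in a similar vein'' from Lemma~\ref{TidealB}, and your write-up is precisely that adaptation, with the left/right roles of the degree-$g$ variable correctly reversed (forced leftmost for $C_1^{(1,g,g)}$ via $x_1^1x_2^g$, rightmost for $C_2^{(1,g)}$ via $x_1^gx_2^1$), the same minimal-$m$ substitution into $e_{22}+e_{33}$ and $a_t(e_{23}+e_{32})$, and the same reduction of items 3) and 4) to \cite[Lemma 3.2]{BV}. No gaps.
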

	
	\bigskip
	
	Recall that the free supercommutative algebra $S = F[U, V]$ is the algebra with $1$ generated by two countable sets
	$U =\{u_{j}^i\}$ and $V =\{v_{j}^i\}$ over $F$, subject to the condition that the elements of $U$ are central and the elements of $V$ anticommute among them.
	The algebra $S$ has a natural $ \mathbb{Z}_2$-grading,  $S=S^{(0)} \oplus S^{(1)}$, by requiring that the variables of $U$ are of homogeneous degree zero and  those of $V$ are of homegeneous degree one. We shall call the elements of $U$ commutative o polynomial variables and the elements of $V$ Grassmann variables. Notice that the Grassmann algebra $E$ can be viewed as a $\mathbb{Z}_2$-graded subalgebra of $S$ if we identify the generating elements $e_k$ of $E$ with the elements of $V$. Hence $S \cong E \otimes F[U], S^{(0)}\cong E^{(0)}\otimes F[U]$ and $S^{(1)}\cong E^{(1)} \otimes F[U].$

	If $B=\bigoplus_{(g,i) \in G\times \mathbb{Z}_2} B^{(g,i)}$ is a finite dimensional $G \times \mathbb{Z}_2$-graded algebra over an algebraically closed field $F,$ one can define the superenvelope of $B$ to be  the algebra $S(B) = (S^{(0)} \otimes B^{(0)}) + (S^{(1)} \otimes B^{(1)})$ where $B^{(0)}=  \bigoplus_{g \in G} B^{(g,0)}$ and $B^{(1)}=  \bigoplus_{g \in G} B^{(g,1)}.$  Clearly $ S(B)$ has an induced $G$-grading where $S(B)^g \cong E(B)^g \otimes F[U],$ with $g \in G$ and $Id^G(S(B)) = Id^G(E(B)).$
	
	Let $ \mathcal B$ denote a basis of homogeneous elements with respect to the $G \times \mathbb{Z}_2$-grading of $B$, write $\mathcal B=\{a_1, \ldots, a_r, b_1, \ldots, b_t\}$ where $\{a_1, \ldots, a_r\}$ is a basis of $B^{(0)}$ and $\{b_1, \ldots, b_t\}$ is a basis of $B^{(1)}.$ For a fixed $n \ge 1,$  we choose $nr$ variables $u_{j}^i \in U$, $i =1, \ldots, n, j=1, \ldots, r$, and $nt$ variables $ v_{j}^i \in V, i =1, \ldots, n, j=1, \ldots, t.$ For $i =1, \ldots, n$ and  $g \in G,$ we define the generic elements
	$$
	Z^{i,g} = \sum_{r_j} u_{r_j}^i\otimes a_{r_j} + \sum_{t_j} v_{t_j}^i\otimes b_{t_j},
	$$
	where the first sum runs over all $r_j$ such that $a_{r_j}$ is of homogeneous degree $(g, 0)$ and the second one runs over all $t_j$ such that $b_{t_j}$ is of homogeneous degree $(g, 1).$
	We denote by $\mathcal{H}$ the $G$-graded subalgebra of $S(B)$ generated by the generic elements $Z^{i,g} , i =1, \ldots, n$, and $g\in G$.
	We have the following basic property (see \cite{LMR}).
	
	\begin{Proposition} \label{free}The algebra $\mathcal{H}$ is isomorphic to the relatively free $G$-graded algebra for $E(B)$ in $ns$ graded generators.
	\end{Proposition}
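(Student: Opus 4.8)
The plan is to exhibit the isomorphism as a quotient of the free $G$-graded algebra on the $ns$ generators. First I would introduce the free $G$-graded algebra $F\langle X_n,G\rangle$ on the generators $\{x_i^g : 1\le i \le n,\ g\in G\}$ together with the evaluation homomorphism
$$
\psi\colon F\langle X_n,G\rangle \to \mathcal{H}, \qquad \psi(x_i^g)=Z^{i,g}.
$$
Since each $Z^{i,g}$ is homogeneous of $G$-degree $g$ in $S(B)$, this is a $G$-graded algebra homomorphism, and it is surjective onto $\mathcal{H}$ by the very definition of $\mathcal{H}$. By the first isomorphism theorem $\mathcal{H}\cong F\langle X_n,G\rangle/\ker\psi$, so the statement reduces to proving
$$
\ker\psi = Id^G(E(B))\cap F\langle X_n,G\rangle .
$$

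The inclusion $\supseteq$ is immediate: as $Id^G(S(B))=Id^G(E(B))$ and the $Z^{i,g}$ are homogeneous elements of $S(B)$, every graded identity of $E(B)$ vanishes once its variables are replaced by the corresponding generic elements, hence lies in $\ker\psi$.

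For the reverse inclusion $\subseteq$, which is the heart of the matter, I would use the freeness of $S=F[U,V]$ to realise an arbitrary homogeneous evaluation of $E(B)$ as a specialisation of the generic one. Fix homogeneous elements $\xi_{i,g}\in E(B)^{(g)}$ and write $\xi_{i,g}=\sum_r \eta^{i}_{g,r}\otimes a_r + \sum_t \theta^{i}_{g,t}\otimes b_t$ with $\eta^{i}_{g,r}\in E^{(0)}$, $\theta^{i}_{g,t}\in E^{(1)}$, the basis elements ranging over those of $G$-degree $g$. As $g$ varies the generic elements $Z^{i,g}$ involve pairwise disjoint families of $U$- and $V$-variables, so there is a well-defined $\mathbb{Z}_2$-graded algebra homomorphism $\Phi_0\colon S\to E$ with $u_r^i\mapsto \eta^{i}_{g,r}$, $v_t^i\mapsto \theta^{i}_{g,t}$ (remaining variables to $0$); this uses precisely that $S$ is the free supercommutative algebra and $E$ is supercommutative. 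Because $\Phi_0$ respects the $\mathbb{Z}_2$-grading, $\Phi:=\Phi_0\otimes \mathrm{id}_B$ carries $S^{(0)}\otimes B^{(0)}$ into $E^{(0)}\otimes B^{(0)}$ and $S^{(1)}\otimes B^{(1)}$ into $E^{(1)}\otimes B^{(1)}$, hence defines a $G$-graded algebra homomorphism $\Phi\colon S(B)\to E(B)$ with $\Phi(Z^{i,g})=\xi_{i,g}$. If now $f\in\ker\psi$, applying $\Phi$ to the identity $f(Z^{1,\bullet},\ldots,Z^{n,\bullet})=0$ gives $f(\ldots,\xi_{i,g},\ldots)=0$; since the $\xi_{i,g}$ were arbitrary homogeneous elements, $f\in Id^G(E(B))$. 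This yields $\ker\psi\subseteq Id^G(E(B))\cap F\langle X_n,G\rangle$ and completes the identification.

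I expect the main obstacle to be the careful construction and verification of the specialisation homomorphism $\Phi$: one must check that $\Phi_0$ is well defined on all of $S$ (freeness of $F[U,V]$ as a supercommutative algebra), that it is $\mathbb{Z}_2$-graded so that $\Phi$ genuinely lands in the Grassmann envelope $E(B)$ rather than in all of $S(B)$, and that $\Phi$ is multiplicative for the superenvelope product $(w_1\otimes c_1)(w_2\otimes c_2)=w_1w_2\otimes c_1c_2$. Once this is in place, the signs coming from the anticommuting Grassmann generators are automatically matched by supercommutativity of $S$, so no separate multilinearisation or monomial-by-monomial sign bookkeeping is required.
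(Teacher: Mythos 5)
Your proof is correct. The paper itself gives no argument for this proposition (it simply cites \cite{LMR}), but what you write is the standard and complete proof: the surjection $\psi$ from the free $G$-graded algebra, the easy inclusion via $Id^G(S(B))=Id^G(E(B))$ (which the paper asserts immediately before the statement), and the reverse inclusion by using the universal property of the free supercommutative algebra $F[U,V]$ to specialise the generic elements $Z^{i,g}$ to arbitrary homogeneous elements of $E(B)$ through a $\mathbb{Z}_2$-graded homomorphism $\Phi_0\otimes\mathrm{id}_B$. This is exactly the argument one finds in the cited source, so there is no substantive divergence to report.
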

	
	Now we are able to prove the following.
	\begin{Proposition} \label{A10,A11} Let $g_1,g_2,g_3 \in G$. Then
		\hfill
		\begin{itemize}
			\item[1)] $Id^G(\mathcal{A}_{9}^{g_1,g_2,g_3})=$ $Id^G(B_1^{(1,g_1)})Id^G(B_2^{(1,1,g_3)})$
			
			\item[2)] $Id^G(\mathcal{A}_{10}^{g_1,g_2,g_3})=Id^G(C_1^{(1,g_1, g_1)})Id^G(C_2^{(1,g_3)})$
		\end{itemize}
	\end{Proposition}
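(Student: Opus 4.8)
The plan is to recognize each of $\mathcal{A}_9$ and $\mathcal{A}_{10}$ as a block upper-triangular $G$-graded algebra whose two diagonal corners are precisely the building blocks appearing on the right-hand side, and then to factor the $T_G$-ideal accordingly. First I would unwind the gradings. Since $M\subseteq UT(2,3)$, the algebra $M^{(e,g_1^0,(g_1g_2)^0,(g_1g_2)^1,(g_1g_2g_3)^0)}$ is homogeneous for the $(2,3)$ block decomposition; computing the degree $d_i^{-1}d_j$ of each matrix unit shows that the top-left $2\times 2$ corner is even on its two nonzero entries and carries relative $G$-degree $g_1$, while the bottom-right $3\times 3$ corner realizes the $\left(\begin{smallmatrix} u & v\\ v & u\end{smallmatrix}\right)$-pattern with exactly the parities of $B_2$ and relative $G$-degrees $(1,1,g_3)$. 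Passing to Grassmann envelopes, which respects this homogeneous block decomposition, then identifies the corners of $\mathcal{A}_9=E(M)$ with $B_1^{(1,g_1)}$ and $B_2^{(1,1,g_3)}$, the connecting $2\times 3$ block being the full bimodule. The same computation for $N\subseteq UT(3,2)$ identifies the corners of $\mathcal{A}_{10}=E(N)$ with $C_1^{(1,g_1,g_1)}$ and $C_2^{(1,g_3)}$.

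The inclusion $Id^G(B_1^{(1,g_1)})\,Id^G(B_2^{(1,1,g_3)})\subseteq Id^G(\mathcal{A}_9)$ is the routine direction. Writing a homogeneous element of $\mathcal{A}_9$ in block form $\left(\begin{smallmatrix} a_1 & w\\ 0 & a_2\end{smallmatrix}\right)$, a $G$-identity $u$ of the top corner kills the $(1,1)$-block of any evaluation, so $u$ takes values in $\left(\begin{smallmatrix} 0 & *\\ 0 & *\end{smallmatrix}\right)$, while a $G$-identity $v$ of the bottom corner takes values in $\left(\begin{smallmatrix} * & *\\ 0 & 0\end{smallmatrix}\right)$; by the block multiplication rule the product $uv$ of two such values is zero. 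The same argument gives the corresponding inclusion in part 2).

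The hard inclusion $Id^G(\mathcal{A}_9)\subseteq Id^G(B_1^{(1,g_1)})\,Id^G(B_2^{(1,1,g_3)})$ is the heart of the matter, and I would deduce it from the graded analogue of Lewin's theorem on products of $T$-ideals: if the connecting bimodule of a block-triangular algebra is free over the relatively free algebras of its two corners, then its $T_G$-ideal of identities is exactly the product of the two corner $T_G$-ideals. The hypothesis would be checked through the generic-element realization of Proposition \ref{free}: the generic elements of $E(M)$ are again block upper-triangular, so evaluating a multilinear $f$ splits into three blocks, the two diagonal ones forcing $f\in Id^G(B_1^{(1,g_1)})\cap Id^G(B_2^{(1,1,g_3)})$ (note each corner embeds as a subalgebra of $\mathcal{A}_9$) and the off-diagonal one carrying the mixed information. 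The main obstacle is precisely verifying that this off-diagonal $2\times 3$ block remains a free bimodule after passing to the Grassmann envelope, that is, that the six connecting entries, with the parities imposed by the grading, still yield algebraically independent generic coefficients so that the off-diagonal component detects no relation outside the product ideal. This is also where the absence of $g_2$ from the statement is explained: a full, free connecting bimodule contributes no identities regardless of its internal grading, and $g_2$ governs only that grading. Granting freeness, Lewin's theorem yields the equality; equivalently, one reduces $f$ modulo the product ideal using the explicit generators of Lemma \ref{TidealB} and shows directly that, once the diagonal blocks vanish, the vanishing of the off-diagonal block leaves nothing outside the product ideal.

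Part 2) would then follow verbatim, replacing $(B_1,B_2)$ by $(C_1,C_2)$ with the gradings $(1,g_1,g_1)$ and $(1,g_3)$ determined above, using the full $3\times 2$ connecting block of $N$, and invoking Lemma \ref{TidealC} in place of Lemma \ref{TidealB}. I expect the only genuinely technical point, in both parts, to be the freeness verification for the connecting bimodule in the Grassmann-envelope setting; everything else is bookkeeping on the block decomposition.
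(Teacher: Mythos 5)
Your strategy coincides with the paper's: the same block decomposition identifying the two diagonal corners with $B_1^{(1,g_1)}$ and $B_2^{(1,1,g_3)}$ (resp.\ $C_1^{(1,g_1,g_1)}$ and $C_2^{(1,g_3)}$), and the same reduction of the hard inclusion to the freeness of the connecting bimodule over the relatively free algebras of the corners via the graded Lewin-type factorization theorem (the paper invokes \cite[Corollary 3.2]{DVLS} for exactly this). Your explanation of why $g_2$ is absent from the statement is also the correct one.

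The gap is that the freeness verification, which you defer as ``the only genuinely technical point,'' is in fact essentially the entire content of the paper's proof, and it is not a routine independence-of-coefficients check. Concretely, after introducing generic elements $X^{i,\cdot}$, $Y^{i,\cdot}$, $U^{i,\cdot}$ in the superenvelope, one must rule out a relation $\sum_i a^1_{3,i}U^3_1 b_{3,i}=0$ with the $a^1_{3,i}$ nonzero in $\tilde B_1$ and the $b_{3,i}$ linearly independent in $\tilde B_2$. Specializing the connecting variables reduces this to the following situation: an element $f=\sum_i \lambda^{i} b_{3,i}$ of $\tilde B_2$ (with coefficients $\lambda^i\in E^{(0)}$ coming from evaluations of the $a$'s) that is not a graded identity of $B_2^{(1,1,g_3)}$, yet every evaluation $\phi(f)$ has zero first row. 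The paper then exploits the specific structure of $B_2$ --- in a nonzero evaluation each monomial admits at most one variable of degree $g_3$, necessarily landing in the last column --- to write $f=f_0+f_1x_i^{g_3}$ and, by comparing the evaluation sending $x_i^{g_3}$ to $de_{45}$ with $d\in E^{(1)}$ against the one sending it to $d'e_{35}$ with $d'\in E^{(0)}$, to force $\phi(f)=0$, a contradiction. Nothing in your proposal supplies this argument or an equivalent one; asserting that ``the six connecting entries yield algebraically independent generic coefficients'' does not suffice, because the potential obstruction lives in the quotients $\tilde B_1$ and $\tilde B_2$, not in a free algebra, and relations of the above kind could a priori occur there. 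Until that verification is carried out (and its analogue for $N$, $C_1$, $C_2$ in part 2), the proposal is an outline of the proof rather than a proof.
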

	\begin{proof}
		We write
		$$
		\mathcal{A}_{9}^{g_1,g_2,g_3}=\left(
		\begin{array}{cc}
			B_1 & J \\
			0 & B_2 \\
		\end{array}
		\right)^{(1,g_1, g_1g_2, g_1g_2, g_1g_2g_3)}
		$$
		
		\medskip
		\noindent
		where
        $B_1$ and $B_2$ are the algebras defined before Lemma \ref{TidealB} and
		$J=\left(
		\begin{array}{ccc}
			E^{(0)} & E^{(1)} & E^{(0)} \\
			E^{(0)} & E^{(1)} & E^{(0)} \\
		\end{array}
		\right)$.
			
		We shall prove that $Id^G(\mathcal{A}_{9}^{g_1,g_2,g_3})=Id^G(B_1^{(1,g_1)})Id^G(B_2^{(1,1,g_3)})$ by making use of Proposition \ref{free}.

        Recall that $\mathcal{A}_{9}^{g_1,g_2,g_3}=E(B)$, where $B=M^{(e,g_1^0 ,(g_1g_2)^0, (g_1g_2)^{1},(g_1g_2g_3)^0)}$
		with $g_i \in G$,
		$1\leq i\leq 3.$
		Notice that $B=B^{(0)}\oplus B^{(1)}$  where  dim$_FB^{(0)}=8$ and dim$_FB^{(1)}=4$.
		
		Let $\{a_1, \ldots , a_8\}$ be a $G$-graded basis of $B^{(0)}$ and let $\{b_1, \ldots , b_4\}$ be a $G$-graded basis of $B^{(1)}$.
		We choose $8n$ homogeneous polynomial variables $u^i_{j}$, $1\le i\le n$, $1\le j\le 8$, and $4n$ homogeneous Grassmann variables  $v^i_{j}$, $1\le i\le n$, $1\le j\le 4$.
		Denote by $\mathcal{H}$ the subalgebra generated by the elements
		$$
		Z^{i,g}=\sum u^i_{r_j} \otimes a_{r_j} + \sum v^i_{t_j}\otimes b_{t_j} \in  F[U, V]\otimes B,
		$$
		for $i = 1, \ldots, n,$ $ g\in G, $ where the first sum runs over all $r_j$ such that $a_{r_j}$ is of homogeneous degree $(g, 0)$ and the
		second one runs over all $t_j$ such that $b_{t_j}$ is of homogeneous degree $(g, 1).$
		
		Since the basis of $B$ is made of matrix units, we shall use a double index for the commutative and Grassmann variables.
		Let us consider, for $i=1, \ldots , n$, the following three infinite disjoint sets of commutative and Grassmann variables
		$$
		X^i=\{u^i_{1,2}, u^i_{2,2}\}, \,\, Y^i=\{u^i_{3,3}, u^i_{3,5}, v^i_{3,4}, v^i_{4,5}\}, \,\,
		U^i=\{u^i_{1,3}, u^i_{1,5}, u^i_{2,3}, u^i_{2,5}, v^i_{1,4}, v^i_{2,4}\},
		$$
		
		\noindent and the following matrices
		$$
		X^{i,g_1}=u^i_{1,2} \otimes e_{12}, \,\, X^{i,1} =u^i_{2,2}\otimes e_{22},
		$$
		$$
		Y^{i,1}=u^i_{3,3}\otimes (e_{33}+e_{44})+ v^i_{3,4}\otimes (e_{34}+e_{43}), \, \, Y^{i,g_3} =u^i_{3,5}\otimes e_{35}+v^i_{4,5}\otimes e_{45},
		$$
		$$
		U^{i,g_1g_2}=u^i_{1,3}\otimes e_{13}+ v^i_{1,4}\otimes e_{14},
		U^{i,g_1g_2g_3} =u^i_{1,5}\otimes e_{15},$$
        $$
		U^{i,g_2}=u^i_{2,3}\otimes e_{23}+v^i_{2,4}\otimes e_{24}, \,\,
		U^{i,g_2g_3}=u^i_{2,5}\otimes e_{25}.
		$$
		
		Then, by the previous Proposition \ref{free}, the generic matrices $X^{i,g_1} , X^{i,1}$ generate the relatively free $G$-graded algebra $\tilde{B}_1$ of rank $2n$ of
		the variety $var^G(B_1^{(i,g_1)})$ and $Y^{i,1} , Y^{i,g_3}$ generate the relatively free $G$-graded algebra $\tilde{B}_2$ of rank $2n$ of
		the variety $var^G(B_2^{(1,1,g_3)})$.
		
		If we prove that, for $i =1 \ldots, n,$ $U^{i,g_1g_2}, U^{i,g_1g_2g_3} ,U^{i,g_2}, U^{i,g_1g_2}, U^{i,g_2g_3}$ generate a $G$-graded free
		$(\tilde{B}_1, \tilde{B}_2$)-bimodule then, by  \cite[Corollary 3.2]{DVLS},
		we obtain that the set  $\{ Z^{i,g}=X^{i,g}+ Y^{i,g}+ U^{i,g} | g \in G, \, 1\le i\le n \}$ generates a relatively free $G$-graded
		algebra of the variety corresponding to  the product of the $T_G$-ideals $Id^G(B_1^{(1,g_1)})Id^G(B_2^{(1,1,g_3)})$.
		This will complete the proof of the proposition.

		Hence let us show that, for $i = 1, \ldots,n,$ $U^{1}_i=U^{i,g_1g_2}, U^{2}_i=U^{i,g_1g_2g_3} , U^{3}_i=U^{i,g_2}, U^{4}_i=U^{i,g_2g_3}$ generate a $G$-graded free
		$(\tilde{B}_1, \tilde{B}_2)$-bimodule.
		Let $a^j_{l,i},$ $1\le l\le 4$,  be  non-zero elements  of $\tilde{B}_1$
		and let  $b_{l,1},b_{l,2}, \ldots $, $1\le l\le 4$,   be linearly independent elements of $\tilde{B}_2$.
		We want to prove that
		$$
		\sum_i\sum_{j=1}^na^j_{1,i}U^{1}_jb_{1,i}+\cdots +\sum_i\sum_{j=1}^na^j_{4,i}U^{4}_jb_{4,i}\neq 0.
		$$
		Since $U^1_j, \ldots , U^4_j$, for $j = 1, \ldots,n,$ depend on disjoint sets of variables it is enough to check that
		$$
		a^1_{3,1}U_1^3b_{3,1}+\cdots +a^1_{3,t}U_1^3b_{3,t}\neq 0,
		$$
		for all $t\geq 1$.
		Since $a^1_{3,1}\neq 0$, it has a non zero value in $B_1^{(1,g_1)}$ under some specialization of the variables in $X^1$. This value is of the type
		$\lambda_{1,2}^1\otimes e_{12}+\lambda_{2,2}^1\otimes e_{22}\neq 0,$
		with $\lambda_{1,2}^1, \lambda_{2,2}^1 \in E^{(0)}$.
		Suppose now that
		$
		a^1_{3,1}U_1^3b_{3,1}+\cdots +a^1_{3,t}U_1^3b_{3,t}= 0.
		$
		Then
		$$
		\sum_{i=1}^ta^1_{3,i}(u^i_{2,3}\otimes e_{23}+  v^i_{2,4}\otimes e_{24})b_{3,i}=0
		$$
		and, so,
		$$
		\sum_{i=1}^t (\lambda_{1,2}^{i}\otimes e_{12}+\lambda_{2,2}^{i}\otimes e_{22})(u^i_{2,3}\otimes e_{23}+  v^i_{2,4}\otimes e_{24}) b_{3,i} =0.
		$$
		Assume for instance that $\lambda_{1,2}^{1}\neq 0$.
		If we consider the following specialization
		$
		u^i_{2,3}\rightarrow 1, \,\,  v^i_{2,4}\rightarrow 0,
		$
		then
		$$
		\sum_{i=1}^t (\lambda_{1,2}^{i}\otimes e_{13}+\lambda_{2,2}^{i}\otimes e_{23})b_{3,i} =0
		$$
		and in particular,
		$$
		(1\otimes e_{13})(\sum_{i=1}^t \lambda_{1,2}^{i} b_{3,i})  =0.
		$$
		Let $f=\sum_{i=1}^t \lambda_{1,2}^{i} b_{3,i}.$ We observe that $f$ is not an identity for $B_2^{(1,1,g_3)}$ but for any specialization of the variables in $Y$,  i.e. for any $G$-graded homomorphism $\phi: \tilde{B}_2\rightarrow B_2^{(1,1,g_3)}$, we have $e_{13}\phi(f)=0.$ It follows that in $\phi(f)$
		all elements of the first row are zero.
        Thus $\phi(f)\in \langle E^{(1)}e_{45} \rangle$. Notice that in a non-zero evaluation of $f$ only one variable in each monomial of $f$ can be evaluated in $E^{(0)}e_{35}$  or $E^{(1)}e_{45}$; all the other variables must be evaluated in $E^{(0)}(e_{33}+e_{44})+E^{(1)}(e_{34}+e_{43}).$ Then in each monomial of $f$ at most one variable of degree $g_3$ can appear.
        Write $f=f_0+f_1x_i^{g_3}$, where a variable $x_j^{g_3}$ never appears as rightmost variable in $f_0$.
        Now let $\phi(x_i^{g_3})=d e_{45}$, with $d \in E^{(1)}.$ Hence $0 \neq \phi(f)= \phi(f_0)+\phi(f_1x_i^{g_3})=\phi(f_1)\phi(x_i^{g_3})=\phi(f_1)de_{45}$
        $=(a (e_{33}+e_{44})+  b(e_{34}+e_{43}))de_{45}= a d e_{45}+ bd e_{35} \in \langle E^{(1)}e_{45} \rangle$, with $a \in E^{(0)}$ and $b\in E^{(1)}$. Thus $b =0.$
         Now if we consider the evaluation $\phi'$ that differs from $\phi$ only for the value of  $x_i^{g_3}$ that is now evaluated in  $d'e_{35}$ with $d'\in E^{(0)}$, then we get
        $\phi'(f)= \phi'(f_0)+\phi'(f_1x_i^{g_3})=\phi(f_1)d'e_{35}$
        $=( a (e_{33}+e_{44})+ b(e_{34}+e_{43}))d'e_{35}= a d' e_{35}+ bd' e_{45} \in \langle E^{(1)}e_{45} \rangle$. This says that $a=0$. Thus $a=b=0$, and this contradicts $\phi(f)\ne 0$.

		The contradiction just obtained proves that $Id^G(\mathcal{A}_{9}^{g_1,g_2,g_3})=Id^G(B_1^{(1,g_1)})Id^G(B_2^{(1,1,g_3)}).$
		
		The second part is proved similarly.
	\end{proof}
	\medskip
	By  Proposition \ref{A10,A11} and its proof we have the following.
	\medskip
	
	\begin{Remark} \label{rem1}
	For the following $G$-graded algebras

		\bigskip
		
		$\mathcal{A}_{9,1}^{g_1,g_2,g_3} =   E( M^{(e,g_1^0 ,(g_1g_2)^0, (g_1g_2)^{1},(g_1g_2g_3)^1)}),$		

        \medskip
		$\mathcal{A}_{9,2}^{g_1,g_2,g_3} =   E(M^{(e,g_1^1 ,(g_1g_2)^1, (g_1g_2)^{0},(g_1g_2g_3)^1}),$	

        \medskip
		$\mathcal{A}_{9,3}^{g_1,g_2,g_3} =   E(M^{(e,g_1^1 ,(g_1g_2)^1, (g_1g_2)^{0},(g_1g_2g_3)^0}),$

		\medskip
		$\mathcal{A}_{10,1}^{g_1,g_2,g_3} =  E( N^{(e,g_1^1 ,g_1^0, (g_1g_2)^1,(g_1g_2g_3)^1}),$
		
		\medskip
		$\mathcal{A}_{10,2}^{g_1,g_2,g_3}  =     E( N^{(e,g_1^0 ,g_1^1, (g_1g_2)^0,(g_1g_2g_3)^1)}),$
		
		\medskip
		$\mathcal{A}_{10,3}^{g_1,g_2,g_3}  =   E( N^{(e,g_1^1 ,g_1^0, (g_1g_2)^1,(g_1g_2g_3)^0)}),$

		\medskip
		\noindent with  $g_i \in G$,  $1\leq i\leq 3$,
        we have
		$$
		Id^G(\mathcal{A}_{9,1}^{g_1,g_2,g_3} )=Id^G(\mathcal{A}_{9,2}^{g_1,g_2,g_3} )=Id(\mathcal{A}_{9,3}^{g_1,g_2,g_3} )=Id^G(\mathcal{A}_{9}^{g_1,g_2,g_3} )
		$$
		and
		$$
		Id^G(\mathcal{A}_{10,1}^{g_1,g_2,g_3} )=Id^G(\mathcal{A}_{10,2}^{g_1,g_2,g_3} )=Id^G(\mathcal{A}_{10,3}^{g_1,g_2,g_3} )=Id(\mathcal{A}_{10}^{g_1,g_2,g_3} ).
		$$
	\end{Remark}
	\bigskip
	
	\section{Constructing $G$-graded algebras of $(G,\delta)$-exponent greater than 2}

	Throughout this section $B=\oplus_{(g,i) \in G \times \mathbb{Z}_2} B^{(g,i)}$ will be a finite dimensional $G \times \mathbb{Z}_2$-graded algebra  over an algebraically closed field $F$ of characteristic zero.
	
	We start with the following
	
	\begin{Lemma} \label{lemmaA_7}
		If $B^{(1,0)}$ contains three orthogonal idempotents $e_1,e_2,e_3$ such that
		$e_1j_1e_2j_2e_3j_3e_1\ne 0$, for some $j_1,j_2,j_3\in J(B)$, then there exist  $g_1,g_2,g_3 \in G$  and a $G \times \mathbb{Z}_2$-graded subalgebra $\bar B$ of $B$  such that $Id^G(E(\bar B))\subseteq Id^G(\mathcal{A}_6^{g_1,g_2,g_3}).$
	\end{Lemma}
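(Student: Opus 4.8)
The plan is to exhibit $\mathcal{A}_6^{g_1,g_2,g_3}$, for a suitable choice of $g_1,g_2,g_3$, as a $G$-graded homomorphic image of a $G$-graded \emph{sub}algebra of $E(\bar B)$. Once this is done, the two standard facts that $G$-identities pass to subalgebras and to homomorphic images give $Id^G(E(\bar B))\subseteq Id^G(\mathcal{A}_6^{g_1,g_2,g_3})$, which is the assertion. First I would homogenize the radical elements. Since $e_1,e_2,e_3\in B^{(1,0)}$ are homogeneous of degree $e$ and $J=J(B)$ is a graded ideal, each $e_1j_1e_2$ splits into homogeneous pieces $e_1(j_1)_{(g,i)}e_2$, and likewise for $e_2j_2e_3$ and $e_3j_3e_1$. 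Expanding the nonzero product $e_1j_1e_2j_2e_3j_3e_1$ into homogeneous summands, at least one summand survives; hence I may replace $j_1,j_2,j_3$ by homogeneous radical elements lying in the corners $e_1Je_2$, $e_2Je_3$, $e_3Je_1$, of respective $G\times\mathbb{Z}_2$-degrees $(g_1,i_1),(g_2,i_2),(g_3,i_3)$, with $j_1j_2j_3=e_1j_1e_2j_2e_3j_3e_1\ne0$. I then fix $g_1,g_2,g_3\in G$ to be the $G$-components of these degrees and let $\bar B$ be the finite dimensional $G\times\mathbb{Z}_2$-graded subalgebra of $B$ generated by $e_1,e_2,e_3,j_1,j_2,j_3$.

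Next I would record the multiplicative combinatorics of $\bar B$. Because the $e_i$ are orthogonal idempotents and the $j_k$ sit in the stated corners, every product of generators is a scalar multiple of a monomial walking along the oriented $3$-cycle $1\to2\to3\to1$; in particular the backward products $j_2j_1$, $j_3j_2$, $j_1j_3$ vanish by orthogonality, while the forward sub-paths $j_1,j_2,j_3,j_1j_2,j_2j_3,j_1j_2j_3$ are all nonzero. The intended target is the assignment $e_1\mapsto e_{11}+e_{44}$, $e_2\mapsto e_{22}$, $e_3\mapsto e_{33}$, $j_1\mapsto e_{12}$, $j_2\mapsto e_{23}$, $j_3\mapsto e_{34}$ into $\mathcal{A}_6^{g_1,g_2,g_3}$, under which $j_1j_2\mapsto e_{13}$, $j_2j_3\mapsto e_{24}$ and $j_1j_2j_3\mapsto e_{14}\ne0$. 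The decisive bookkeeping point is that every ``wrap-around'' monomial of $\bar B$ (one that traverses the edge $3\to1$ and then continues, or that lies in $J^4$) is automatically sent to $0$, precisely because the corresponding product of matrix units in the \emph{upper}-triangular algebra $\mathcal{A}_6^{g_1,g_2,g_3}$ is out of range; this out-of-range vanishing is exactly what makes the assignment compatible with multiplication.

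The genuine obstacle is that some $j_k$ may be odd, whereas their targets $e_{12},e_{23},e_{34}$ are even, so the assignment above is not a $G\times\mathbb{Z}_2$-graded map and cannot be handed to the Grassmann-envelope machinery directly. To remove this I would pass to $E(\bar B)$ and absorb the parities into Grassmann scalars: choosing distinct generators of $E^{(1)}$, set $\epsilon_k=1$ when $j_k$ is even and $\epsilon_k\in E^{(1)}$ otherwise, and put $\hat e_k=1\otimes e_k$ and $\hat j_k=\epsilon_k\otimes j_k$, so that $\hat j_k\in E(\bar B)^{(g_k)}$. Let $C$ be the $G$-graded subalgebra of $E(\bar B)$ generated by the $\hat e_k$ and $\hat j_k$. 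Since any product $\epsilon_{k_1}\cdots\epsilon_{k_r}$ of distinct Grassmann generators is nonzero, the surviving forward products of the $\hat j_k$ keep exactly the $G$-degrees $g_{k_1}\cdots g_{k_r}$ and mirror the nonzero matrix-unit products; in particular $\hat j_1\hat j_2\hat j_3=\epsilon_1\epsilon_2\epsilon_3\otimes j_1j_2j_3\ne0$. Defining $\psi\colon C\to\mathcal{A}_6^{g_1,g_2,g_3}$ on the monomial basis by sending $\hat e_1\mapsto e_{11}+e_{44}$, $\hat e_2\mapsto e_{22}$, $\hat e_3\mapsto e_{33}$, the forward sub-paths to their matrix units, and all remaining monomials to $0$, I obtain a surjective $G$-graded homomorphism by the same out-of-range vanishing as above. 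Since $\psi$ is surjective onto each homogeneous component (the matrix units and $e_{11}+e_{44}$ span $\mathcal{A}_6^{g_1,g_2,g_3}$ and lie in the image), it follows that $Id^G(E(\bar B))\subseteq Id^G(C)\subseteq Id^G(\mathcal{A}_6^{g_1,g_2,g_3})$.

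I expect the hardest part to be precisely the verification in the third step: that the Grassmann scalars $\epsilon_k$ never collapse the relevant images, that the envelope multiplication (with its signs coming from the anticommuting odd generators) leaves all forward products nonzero with the correct degrees, and that $\psi$ respects products in every case, including the many wrap-around monomials that must all land in the kernel. The homogenization and the description of $\bar B$ are routine; the parity bookkeeping inside $E(\bar B)$ is where the real care is needed.
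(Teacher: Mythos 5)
Your proposal is correct and follows essentially the same route as the paper's proof: after homogenizing $j_1,j_2,j_3$, the paper takes the same subalgebra $\bar B$ generated by $e_1,e_2,e_3$ and the corner elements, maps it onto a subalgebra of $UT_4(F)^{\underline{h}}$ via exactly your assignment $e_1\mapsto e_{11}+e_{44}$, $e_2\mapsto e_{22}$, $e_3\mapsto e_{33}$, $e_1j_1e_2\mapsto e_{12}$, etc.\ (with kernel generated by the wrap-around $e_3j_3e_1j_1e_2$), and then absorbs the $\mathbb{Z}_2$-parities with Grassmann scalars to locate a copy of $\mathcal{A}_6^{g_1,g_2,g_3}$. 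The only difference is that you pass to the Grassmann envelope before quotienting by the wrap-around relations rather than after, which is immaterial.
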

	
	\begin{proof}
		By linearity we may assume that the elements $j_1,  j_2, j_3$ are homogeneous of degree $h_1=(g_1, i_1), h_2=(g_2, i_2)$ and $h_3=(g_3, i_3)$, respectively.
		Let $\bar B$ be the $ G \times \mathbb{Z}_2$-graded subalgebra of $B$ generated by the homogeneous elements
		$$
		e_1,  e_2,  e_3, e_1j_1e_2, e_2j_2e_3, e_3j_3e_1.
		$$
		We consider  $UT_4(F)^{\underline{h}}$ the algebra of $4 \times 4$ upper triangular matrices  with elementary $G\times\mathbb{Z}_2$-grading induced by $\underline{h}=(e, h_1, h_1h_2, h_1h_2h_3).$ We construct a homomorphism of $G \times \mathbb{Z}_2$-graded algebras $\varphi :\bar B\rightarrow UT_4(F)^{\underline{h}}$  by setting $\varphi(e_1)=e_{11}+e_{44}$, $\varphi(e_2)=e_{22}$, $\varphi(e_3)=e_{33}$, $\varphi(e_1j_1e_2)=e_{12}$, $\varphi(e_2j_2e_3)=e_{23}$ and $\varphi(e_3j_3e_1)=e_{34}$. Then $I=Ker\varphi$ is the ideal of $\bar B$ generated by the element
		$$
		e_3j_3e_1j_1e_2
		$$
		and $\bar B/I\cong \varphi(\bar B)$ is a subalgebra of $UT_4(F)^{\underline{h}}$.
		Now, it can be checked that the elements
		$$
		e_1,  e_2,  e_3, e_1j_1e_2, e_2j_2e_3, e_3j_3e_1, e_1j_1e_2j_2e_3, e_2j_2e_3j_3e_1, e_1j_1e_2j_2e_3j_3e_1
		$$
		are linearly independent and form a basis of $\bar B$ mod $I$.
		By abuse of notation we identify these representatives with the corresponding cosets.
		
		Let $a_1, a_2, a_3$ be distinct homogeneous elements of $E$ having homogeneous degree  $i_1, i_2, i_3$, respectively. Then the elements
		$$
		1\otimes e_1,\  1\otimes e_2,\  1\otimes e_3,\ a_1\otimes e_1j_1e_2,\ a_2\otimes e_2j_2e_3,\ a_3\otimes e_3j_3e_1,
		$$
		$$
		a_1a_2\otimes e_1j_1e_2j_2e_3,\ a_2a_3\otimes e_2j_2e_3j_3e_1,\  a_1a_2a_3 \otimes e_1j_1e_2j_2e_3j_3e_1,
		$$
		form a basis of a $G$-graded subalgebra $\mathcal C$ of $E(\bar B/I)$ isomorphic to $\mathcal{A}_6^{g_1,g_2,g_3}.$
		
		We obtain that  $Id^G(\mathcal{A}_6^{g_1,g_2,g_3})= Id^G(\mathcal C) \supseteq Id^G(E(\bar B/I)) \supseteq Id^G(E(\bar B))$, and the proof is complete.
	\end{proof}
	
	\begin{Lemma} \label{lemmaA_8}
		If $B^{(1,0)}$ contains three orthogonal idempotents $e_1,e_2,e_3$ such that
		$j_1e_1j_2e_2j_3e_3j_4\ne 0$,  $j_i \in J(B)$,    then there exist  $g_1,g_2,g_3, g_4 \in G$  and  a $G\times \mathbb{Z}_2$-graded subalgebra $\bar B$ of $B$  such that $Id^G(E(\bar B))\subseteq Id^G(\mathcal{A}_7^{g_1,g_2,g_3, g_4}).$
	\end{Lemma}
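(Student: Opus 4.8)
My plan is to follow the template of Lemma \ref{lemmaA_7} almost verbatim, replacing the closed $3$-cycle of idempotents (which realised the corner identification $a_{11}=a_{44}$ of $\mathcal A_6$) by an \emph{open} path whose two endpoints play the role of the dead corners $a_{11}=a_{55}=0$ of $\mathcal A_7^{g_1,g_2,g_3,g_4}$. By linearity I may assume $j_1,j_2,j_3,j_4$ homogeneous, of $G\times\mathbb Z_2$-degrees $h_1=(g_1,i_1),\dots,h_4=(g_4,i_4)$, and I let $\bar B$ be the $G\times\mathbb Z_2$-graded subalgebra of $B$ generated by the homogeneous elements
$$e_1,\ e_2,\ e_3,\ j_1e_1,\ e_1j_2e_2,\ e_2j_3e_3,\ e_3j_4 .$$
I set $\underline h=(e,h_1,h_1h_2,h_1h_2h_3,h_1h_2h_3h_4)$, whose $G$-projection is exactly the tuple $\textbf{g}$ defining $\mathcal A_7^{g_1,g_2,g_3,g_4}$.

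Next I construct a $G\times\mathbb Z_2$-graded homomorphism $\varphi:\bar B\to UT_5(F)^{\underline h}$ by $\varphi(e_1)=e_{22}$, $\varphi(e_2)=e_{33}$, $\varphi(e_3)=e_{44}$ and $\varphi(j_1e_1)=e_{12}$, $\varphi(e_1j_2e_2)=e_{23}$, $\varphi(e_2j_3e_3)=e_{34}$, $\varphi(e_3j_4)=e_{45}$; the homogeneous degrees match since $e_{k,k+1}$ has degree $h_k$. Its image is precisely the subalgebra of $UT_5(F)^{\underline h}$ cut out by $a_{11}=a_{55}=0$, so once $\varphi$ is known to be well defined, $\bar B/\ker\varphi\cong\varphi(\bar B)$ has as an $F$-basis the three idempotents together with the consecutive sub-path products
$$e_1,e_2,e_3,\ j_1e_1,\,e_1j_2e_2,\,e_2j_3e_3,\,e_3j_4,\ j_1e_1j_2e_2,\dots,\ j_1e_1j_2e_2j_3e_3j_4 ,$$
these being sent to distinct matrix units, while every remaining product of generators is sent to $0$ and hence lies in $\ker\varphi$.

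Writing $I=\ker\varphi$, I choose homogeneous $a_1,a_2,a_3,a_4\in E$ of $\mathbb Z_2$-degrees $i_1,i_2,i_3,i_4$ and verify that $1\otimes e_1,1\otimes e_2,1\otimes e_3$ together with $a_1\otimes j_1e_1,\dots,a_4\otimes e_3j_4$, then $a_1a_2\otimes j_1e_1j_2e_2,\dots$, up to $a_1a_2a_3a_4\otimes j_1e_1j_2e_2j_3e_3j_4$, span a $G$-graded subalgebra $\mathcal C$ of $E(\bar B/I)$ isomorphic to $\mathcal A_7^{g_1,g_2,g_3,g_4}$, the Grassmann signs converting the $\mathbb Z_2$-degrees back into the elementary $G$-grading of $UT_5(F)^{\textbf{g}}$. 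Since the Grassmann envelope is functorial, $Id^G(E(\bar B))\subseteq Id^G(E(\bar B/I))$, and since $\mathcal C\subseteq E(\bar B/I)$ I conclude
$$Id^G(\mathcal A_7^{g_1,g_2,g_3,g_4})=Id^G(\mathcal C)\supseteq Id^G(E(\bar B/I))\supseteq Id^G(E(\bar B)),$$
which is the assertion.

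The one genuinely new point, and the step I expect to require the most care, is the well-definedness of $\varphi$ at the two open ends of the path. Since the path does not close up, $\ker\varphi$ is not principal as it was in Lemma \ref{lemmaA_7}. The backward and overflow products (such as $e_1j_2e_2j_1e_1$ or $e_3j_4j_1e_1$) lie in Peirce components disjoint from the valid products of equal $G\times\mathbb Z_2$-degree, so they are harmlessly annihilated; the delicate relations are the end absorptions $e_1(j_1e_1)=e_1j_1e_1$ and $(e_3j_4)e_3=e_3j_4e_3$, which share a Peirce component with $j_1e_1$ and $e_3j_4$ and hence must be forced to vanish for $\varphi$ to be consistent. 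I would arrange this by replacing $j_1$ with $(1-e_1)j_1$ and $j_4$ with $j_4(1-e_3)$ in the unitalization of $B$, so that $e_1j_1e_1=0=e_3j_4e_3$; the crux of the argument is then to check that this purification leaves the distinguished product $j_1e_1j_2e_2j_3e_3j_4$ nonzero, so that $\varphi$ still surjects onto the full thirteen-dimensional algebra. This survival of the product is exactly the place where the hypothesis $j_1e_1j_2e_2j_3e_3j_4\neq 0$ (together with the placement of the idempotents relative to the radical) must be exploited.
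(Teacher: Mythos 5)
Your construction is, up to the last paragraph, exactly the paper's: the same generating set $e_1,e_2,e_3,j_1e_1,e_1j_2e_2,e_2j_3e_3,e_3j_4$ for $\bar B$, the same graded homomorphism onto the subalgebra of $UT_5(F)^{\underline h}$ cut out by $a_{11}=a_{55}=0$, the same thirteen path products as a basis of $\bar B$ modulo the kernel, and the same decoration by Grassmann elements $a_1,\dots ,a_4$ producing a copy of $\mathcal{A}_7^{g_1,g_2,g_3,g_4}$ inside $E(\bar B/I)$, whence $Id^G(\mathcal{A}_7^{g_1,g_2,g_3,g_4})\supseteq Id^G(E(\bar B/I))\supseteq Id^G(E(\bar B))$.

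The final paragraph is where you depart from the paper, and it is also where your argument is not closed. The paper never modifies $j_1$ or $j_4$: it declares $I=\ker\psi$ to be the homogeneous ideal of $\bar B$ generated by the seven defective products $e_3j_4e_1$, $e_3j_4e_2$, $e_3j_4e_3$, $e_3j_4j_1e_1$, $e_3j_1e_1$, $e_2j_1e_1$, $e_1j_1e_1$, passes to $\bar B/I$, and asserts that the thirteen path products stay linearly independent there. Your alternative purification, replacing $j_1$ by $(1-e_1)j_1$ and $j_4$ by $j_4(1-e_3)$, is not harmless: if for instance $j_1\in e_1B$ (say $j_1\in e_1Je_1$), then $(1-e_1)j_1=0$, so the purified product vanishes even though $j_1e_1j_2e_2j_3e_3j_4\neq 0$; more generally $(1-e_1)j_1e_1j_2e_2j_3e_3j_4(1-e_3)$ can be zero while the original product is not. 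You correctly isolate the survival of the distinguished product as ``the crux,'' but you do not prove it, and the specific fix you propose can destroy the hypothesis rather than preserve it. To match the paper you should drop the purification entirely and instead work in the quotient $\bar B/I$, where the end absorptions $e_1j_1e_1$ and $e_3j_4e_3$ are killed by fiat; the point that then needs checking (and which the paper asserts as ``linear independence of the thirteen elements mod $I$'') is that the long product $j_1e_1j_2e_2j_3e_3j_4$ does not fall into the ideal generated by the defective products.
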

	
	\begin{proof}
		As in the previous lemma we assume that the elements  $j_1,j_2,j_3, j_4$ are homogeneous of degree $h_t=(g_t,i_t)$, for $t=1,\ldots , 4,$ respectively.
		Let $\bar B$ be the $G \times \mathbb{Z}_2$-graded subalgebra of $B$ generated by the homogeneous elements
		$$
		e_1,  e_2,  e_3, j_1e_1, e_1j_2e_2, e_2j_3e_3, e_3j_4.
		$$
		Next we consider $UT_5(F)^{\underline{h}}$,  the algebra $UT_5(F)$ with elementary $G\times\mathbb{Z}_2$-grading induced by $\underline{h}=(e, h_1, h_1h_2, h_1h_2h_3, \\ h_1h_2h_3h_4)$ and we build a homomorphism of $G \times \mathbb{Z}_2$-graded algebras $\psi :\bar B\rightarrow UT_5(F)^{\underline{h}}$ defined by setting $\psi(e_1)=e_{22}$, $\psi(e_2)=e_{33}$, $\psi(e_3)=e_{44}$, $\psi(j_1e_1)=e_{12}$, $\psi(e_1j_2e_2)=e_{23}$, $\psi(e_2j_3e_3)=e_{34}$ and $\psi(e_3j_4)=e_{45}$. Then $I=Ker \psi$ is the homogeneous ideal of $\bar B$ generated by the elements
		$$
		e_3j_4e_1, \, e_3j_4e_2, \, e_3j_4e_3, \, e_3j_4j_1e_1, \, e_3j_1e_1, \,  e_2j_1e_1, \,
		e_1j_1e_1
		$$
		and $\bar B/I\cong \psi(\bar B)$ a subalgebra of $UT_5(F)^{\underline{h}}$.
		The elements
		$$
		e_1,  e_2,  e_3, \
		j_1e_1,\
		e_1j_2e_2,\
		e_2j_3e_3,\
		e_3j_4,\ j_1e_1j_2e_2,\ e_1j_2e_2j_3e_3,\
		$$
		$$
		e_2j_3e_3j_4,\ j_1e_1j_2e_2j_3e_3,\
		e_1j_2e_2j_3e_3j_4,\
		j_1e_1j_2e_2j_3e_3j_4
		$$
		are linearly independent  and form a basis of $\bar B$ mod $I$.
		
		By abuse of notation we identify these representatives with the corresponding cosets.
		Then, for distinct $a_t\in  E^{(0)}\cup E^{(1)}$ having homogeneous  $\mathbb{Z}_2$-degree equal to $i_t$, with $t=1, \ldots , 4$, the elements
		$$
		1\otimes e_1,  1\otimes e_2,  1\otimes e_3,\ a_1\otimes  j_1e_1,\
		a_2 \otimes e_1j_2e_2,\
		a_3 \otimes e_2j_3e_3,\
		a_4 \otimes e_3j_4,\
		$$
		$$
		a_1a_2  \otimes j_1e_1j_2e_2,\ a_2a_3\otimes e_1j_2e_2j_3e_3,\
		a_3a_4\otimes e_2j_3e_3j_4,\
		$$$$
		a_1a_2a_3 \otimes j_1e_1j_2e_2j_3e_3,\
		a_2a_3a_4 \otimes e_1j_2e_2j_3e_3j_4,\
		a_1a_2a_3a_4 \otimes j_1e_1j_2e_2j_3e_3j_4,
		$$
		
		\smallskip
		\noindent are a basis of a subalgebra
		$\mathcal C$ of $E(\bar B/I)$ isomorphic to $\mathcal{A}_7^{g_1,g_2,g_3,g_4}$
		
		It follows that $Id^G(\mathcal{A}_7^{g_1,g_2,g_3, g_4})=Id^G(\mathcal C) \supseteq Id^G(E(\bar B/I)) \supseteq Id^G(E(\bar B))$, a desired conclusion.
	\end{proof}
	
	\begin{Lemma} \label{lemmaA_9}
		If $B^{(1,0)}$ contains two orthogonal idempotents $e_1,e_2$, with $e_2\in F\oplus cF$, $c^2=1$, such that
		$e_2j_2e_1j_1e_2\ne 0$, for some $j_1,j_2\in J(B)$,   then there exist $g_1,g_2 \in G$ and a $G \times \mathbb{Z}_2$-graded subalgebra $\bar B$ of $B$  such that $Id^G(E(\bar B))\subseteq Id^G(\mathcal{A}_8^{g_1,g_2})$.
	\end{Lemma}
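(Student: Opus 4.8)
The plan is to adapt the method of Lemmas~\ref{lemmaA_7} and~\ref{lemmaA_8}: I extract from $B$ a finitely generated $G\times\mathbb{Z}_2$-graded subalgebra $\bar B$, pass to a controlled quotient $\bar B/I$, and identify its Grassmann envelope with $\mathcal{A}_8^{g_1,g_2}$. As in those lemmas, by linearity I may assume $j_1,j_2$ homogeneous, say of degrees $h_1=(g_1,i_1)$ and $h_2=(g_2,i_2)$, and I take $c$ homogeneous. Since the diagonal of $\mathcal{A}_8^{g_1,g_2}$ ranges over all of $E$ and not merely over $E^{(0)}$, the pertinent case is that of $c$ \emph{odd}, of degree $(1_G,1)$, with $c^2=e_2$; thus $Fe_2\oplus Fc$ is a graded copy of $F\mathbb{Z}_2$ inside $e_2Be_2$, with identity $e_2$. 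I let $\bar B$ be generated by the homogeneous elements $e_1,e_2,c$ together with $r_1=e_1j_1e_2$ and $r_2=e_2j_2e_1$, and I note that the parameters $g_1,g_2$ of the target are exactly the $G$-degrees of $j_1,j_2$.

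Next I quotient by the homogeneous ideal $I$ that trims $\bar B$ to a fixed nine-element pattern; concretely $I$ is generated by the ``reverse loop'' $r_1r_2=e_1j_1e_2j_2e_1$ and by the corner relation identifying $c\,(r_2r_1)$ with $(r_2r_1)\,c$. Because $r_1^2=r_2^2=0$ and $ce_1=e_1c=0$ by orthogonality, every longer word then vanishes, and a direct computation shows that $\bar B/I$ is spanned by the nine linearly independent elements
$$
e_2,\ c,\ e_1,\ r_2,\ cr_2,\ r_1,\ r_1c,\ r_2r_1,\ c\,r_2r_1 .
$$
Here $e_1$ is the even middle idempotent, the pair $e_2,c$ spans the outer corner $F\mathbb{Z}_2$, and the remaining six elements distribute, one even and one odd each, over the three strictly-upper positions. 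It is precisely the hypotheses $r_2r_1=e_2j_2e_1j_1e_2\neq 0$ and $c^2=e_2$ that keep this list from collapsing.

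I then write down an isomorphism $E(\bar B/I)\cong\mathcal{A}_8^{g_1,g_2}$ into $UT_3(E)^{\mathbf{g}}$ with $\underline g=(1_G,g_2,g_1)$, sending each homogeneous tensor to the matching scalar multiple of a matrix unit: for $\xi,\eta\in E^{(0)}$ and $\zeta\in E^{(1)}$,
$$
\xi\otimes e_1\mapsto\xi e_{22},\qquad \eta\otimes e_2\mapsto\eta(e_{11}+e_{33}),\qquad \zeta\otimes c\mapsto\zeta(e_{11}+e_{33}),
$$
and likewise $r_2,cr_2$ to $E$-multiples of $e_{12}$, $r_1,r_1c$ to $E$-multiples of $e_{23}$, and $r_2r_1,c\,r_2r_1$ to $E$-multiples of $e_{13}$. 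The reason for placing $c$ in degree $(1_G,1)$ is the corner computation $(\zeta\otimes c)(\zeta'\otimes c)=\zeta\zeta'\otimes e_2\mapsto\zeta\zeta'(e_{11}+e_{33})$, i.e. the collapse $E(F\mathbb{Z}_2)\cong E$: the generators $e_2,c$ together produce an arbitrary diagonal entry $a\in E$, automatically equal in positions $(1,1)$ and $(3,3)$, whereas $e_1$, having no odd partner, forces $a_{22}\in E^{(0)}$. This is the defining shape of $\mathcal{A}_8^{g_1,g_2}$. Since $\bar B/I$ is a quotient of $\bar B$, I conclude
$$
Id^G(E(\bar B))\subseteq Id^G(E(\bar B/I))=Id^G(\mathcal{A}_8^{g_1,g_2}),
$$
as desired.

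The real work lies in showing that the assignment above is an algebra isomorphism and not merely a graded linear bijection: one has to match, position by position, the Grassmann signs generated by the super-multiplication of $E(\bar B/I)$ against the ordinary matrix multiplication in $\mathcal{A}_8^{g_1,g_2}$, the delicate instances being $c^2=e_2$ and its interaction with $cr_2$, $r_1c$ and $c\,r_2r_1$. The companion fact, linear independence of the nine generators modulo $I$, is exactly where $e_2j_2e_1j_1e_2\neq 0$ and $c^2=e_2$ are used; as in Lemmas~\ref{lemmaA_7} and~\ref{lemmaA_8} one checks it by specializing $j_1,j_2,c$ back to the witnessing elements of $B$, or else by invoking the generic-element description of the relatively free envelope from Proposition~\ref{free}.
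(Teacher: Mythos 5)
Your strategy is the same as the paper's: take $\bar B$ generated by $e_1,e_2,ce_2,r_1=e_1j_1e_2,r_2=e_2j_2e_1$, pass to a nine-dimensional quotient, and identify its Grassmann envelope with $\mathcal{A}_8^{g_1,g_2}$ inside $UT_3$; your nine basis elements are exactly the ones the paper lists. There is, however, a concrete error in the reduction step. The ideal $I$ you describe, generated by $r_1r_2$ and $c(r_2r_1)-(r_2r_1)c$, does not kill the word $r_1cr_2=e_1j_1ce_2j_2e_1$: this word contains no occurrence of $r_1r_2$, is untouched by $r_1^2=r_2^2=0$ and $ce_1=e_1c=0$, and lies in $e_1\bar Be_1$ outside the span of your nine elements. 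So the assertion that ``every longer word then vanishes'' fails, $\bar B/I$ is a priori ten-dimensional, and the assignment you write down cannot be an isomorphism onto $\mathcal{A}_8^{g_1,g_2}$, since it sends $r_1$, $c$, $r_2$ to multiples of $e_{23}$, of $e_{11}+e_{33}$ and of $e_{12}$ respectively, whose product is zero, while $r_1cr_2$ need not vanish in your quotient. The paper avoids this by placing $e_1j_1ce_2j_2e_1$, together with $e_1j_1e_2j_2e_1$, among the generators of $I$; adding it repairs your argument, because all remaining longer words then do die by the kind of manipulation you indicate (for instance $r_2r_1cr_2=(cr_2r_1)r_2=cr_2(r_1r_2)=0$ and $c\,r_2r_1c=r_2r_1c^2=r_2r_1$).

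A secondary point: you restrict at the outset to $c$ homogeneous of degree $(1_G,1)$, calling this ``the pertinent case''. The hypothesis only gives $c^2=1$ with $e_2\in F\oplus cF$, and in the application inside Theorem \ref{teorema1} the graded-simple component $F\oplus cF$ may also have its nontrivial part in degree $(g,0)$ or $(g,1)$ with $g\neq 1_G$ of order two; the paper formally keeps $c$ of arbitrary homogeneous degree and absorbs the $\mathbb{Z}_2$-degrees of $j_1,j_2$ into coefficients $\alpha,\beta\in\{1,c\}$ attached to $e_{23}$ and $e_{12}$. You should either justify why the remaining degrees of $c$ can be discarded or treat them explicitly.
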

	\begin{proof}
		Let $j_1,  j_2 \in J(B)$ be  homogeneous of degree $h=(g_1, i_1)$ and $ k =(g_2, i_2)$, respectively and let $\bar B$ be the $G \times \mathbb{Z}_2$-graded subalgebra of $B$ generated by the homogeneous elements
		$$
		e_1,  e_2, ce_2,   e_1j_1e_2, e_2j_2e_1.
		$$
		
		Consider the homomorphism of $G \times \mathbb{Z}_2$-graded algebras $\varphi : \bar B\rightarrow UT_3(F\oplus cF)^{(e, k, kh)}$  defined by setting $\varphi(e_1)=e_{22}$, $\varphi(e_2)=e_{11}+e_{33}$, $\varphi(ce_2)=c(e_{11}+e_{33})$, $\varphi(e_1j_1e_2)=\alpha e_{23}$ and $\varphi(e_2j_2e_1)=\beta e_{12}$,   where $\alpha, \beta \in \{1, c\}$ according to  $i_1$ and $i_2$.
		Then $I=Ker\varphi$ is the ideal of $ \bar B$ generated by
		$
		e_1j_1e_2j_2e_1, e_1j_1ce_2j_2e_1
		$
		and $ \bar B/I\cong \varphi( \bar B)=\bar A$, where
		$$\bar A=\left \{\begin{pmatrix}
			a & x & y  \\
			0& b & z  \\
			0 & 0 & a
		\end{pmatrix} \, | \, b\in F, a,x,y,z\in F\oplus cF \right \}
		$$
		\noindent is the subalgebra of $UT_3(F\oplus cF)$ with elementary  $G \times \mathbb{Z}_2$ grading induced by $(e, k, kh).$
		The following elements
		$$
		e_1,  e_2, ce_2,   e_1j_1e_2, e_2j_2e_1, e_1j_1ce_2, ce_2j_2e_1, e_2j_2e_1j_1e_2, ce_2j_2e_1j_1e_2
		$$
		are linearly independent and form a basis of $\bar B$ mod $I.$ By identifying these representatives with the corresponding cosets
		we obtain that  the elements
		$$
		1\otimes e_1,  1\otimes e_2,  1\otimes ce_2, 1 \otimes e_1j_1e_2, 1 \otimes  e_2j_2e_1,$$ $$ 1 \otimes e_1j_1ce_2,  1 \otimes ce_2j_2e_1,  1 \otimes e_2j_2e_1j_1e_2,  1 \otimes ce_2j_2e_1j_1e_2
		$$
		form a basis of $E(\bar B/I)\cong E(\bar A)=\mathcal{A}_8^{g_1,g_2}$ and so
		$Id^G(\mathcal{A}_8^{g_1,g_2}) \supseteq Id^G(E(\bar B))$.
	\end{proof}
	
	\begin{Lemma} \label{lemmaA_{10}}
		Let $B^{(1,0)}$ contain two orthogonal idempotents $e_1,e_2$, with $e_2\in F\oplus cF$, $c^2=1$, such that
		$j_1e_1j_2e_2j_3\ne 0$, for some  $j_1,j_2,j_3\in J(B)$. Then there exist $g_1,g_2,g_3 \in G$ and a $G \times\mathbb{Z}_2$-graded subalgebra $\bar B$ of $B$  such that $Id^G(E(\bar B))\subseteq Id^G(\mathcal{A}_{9}^{g_1,g_2,g_3})$ .
	\end{Lemma}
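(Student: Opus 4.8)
The plan is to realize the algebra $M$ of Proposition \ref{A10,A11} as a homomorphic image of a suitable subalgebra of $B$, exactly as in Lemmas \ref{lemmaA_7}, \ref{lemmaA_8} and \ref{lemmaA_9}. By multilinearity of the graded identities I may assume that $j_1,j_2,j_3$ are homogeneous, say of degrees $h_t=(g_t,i_t)\in G\times\mathbb{Z}_2$, $t=1,2,3$, and I take $g_1,g_2,g_3$ to be their $G$-parts. Since $e_2$ is the unit of $F\oplus cF$ and $c^2=e_2\in B^{(1,0)}$, the reflection $c$ is homogeneous of some degree $(1_G,\varepsilon)$; the relevant case, and the one matching the middle block of $M$, is $\varepsilon=1$, which I assume (if $\varepsilon=0$ then $\tfrac12(e_2\pm c)$ are two ordinary orthogonal idempotents of $B^{(1,0)}$, a configuration falling outside the present one). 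First I would let $\bar B$ be the $G\times\mathbb{Z}_2$-graded subalgebra of $B$ generated by the homogeneous elements $e_1,\ e_2,\ c,\ j_1e_1,\ e_1j_2e_2,\ e_2j_3$.

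Next I would endow $M$ with the elementary $G\times\mathbb{Z}_2$-grading induced by $\underline{h}=(e,\ g_1^{i_1},\ (g_1g_2)^{i_1+i_2},\ (g_1g_2)^{i_1+i_2+1},\ (g_1g_2g_3)^{i_1+i_2+i_3})$, tuned so that $e_{34}$ has degree $(1_G,1)$ and the arms $e_{12},e_{23},e_{35}$ have degrees $(g_1,i_1),(g_2,i_2),(g_3,i_3)$. A direct inspection shows that $E(M^{\underline{h}})$ is $G$-graded isomorphic to one of $\mathcal{A}_9^{g_1,g_2,g_3},\mathcal{A}_{9,1}^{g_1,g_2,g_3},\mathcal{A}_{9,2}^{g_1,g_2,g_3},\mathcal{A}_{9,3}^{g_1,g_2,g_3}$: these four variants, together with the interchange of rows and columns $3$ and $4$ (an automorphism of $M$), absorb all eight parity patterns of $(i_1,i_2,i_3)$, and by Remark \ref{rem1} they all have the same $G$-graded identities as $\mathcal{A}_9^{g_1,g_2,g_3}$. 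Then I would define $\varphi:\bar B\to M^{\underline{h}}$ by $\varphi(e_1)=e_{22}$, $\varphi(e_2)=e_{33}+e_{44}$, $\varphi(c)=e_{34}+e_{43}$, $\varphi(j_1e_1)=e_{12}$, $\varphi(e_1j_2e_2)=e_{23}$, $\varphi(e_2j_3)=e_{35}$. The idempotency and orthogonality relations, together with $c^2=e_2\mapsto e_{33}+e_{44}$ and the directedness of the three arms, are matched by the matrix units, so $\varphi$ extends to a graded homomorphism, and multiplying out the generators shows it is surjective.

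Setting $I=\ker\varphi$, so $\bar B/I\cong M^{\underline{h}}$, I would check that the twelve products $e_1,\ e_2,\ c,\ j_1e_1,\ e_1j_2e_2,\ e_1j_2e_2c,\ e_2j_3,\ ce_2j_3,\ j_1e_1j_2e_2,\ j_1e_1j_2e_2c,\ e_1j_2e_2j_3,\ j_1e_1j_2e_2j_3$ are a basis of $\bar B$ modulo $I$: they are linearly independent since $\varphi$ carries them onto the twelve matrix units spanning $M$, and they span because $\dim_F\bar B/I=\dim_F M=12$. Identifying each with its coset and choosing distinct homogeneous Grassmann elements $a_1,a_2,a_3$ of $\mathbb{Z}_2$-degrees $i_1,i_2,i_3$ and $b\in E^{(1)}$, I would exhibit the $G$-graded subalgebra $\mathcal C$ of $E(\bar B/I)$ spanned by $1\otimes e_1$, $1\otimes e_2$, $b\otimes c$, $a_1\otimes j_1e_1$, $a_2\otimes e_1j_2e_2$, $a_2b\otimes e_1j_2e_2c$, $a_3\otimes e_2j_3$, $ba_3\otimes ce_2j_3$, $a_1a_2\otimes j_1e_1j_2e_2$, $a_1a_2b\otimes j_1e_1j_2e_2c$, $a_2a_3\otimes e_1j_2e_2j_3$, $a_1a_2a_3\otimes j_1e_1j_2e_2j_3$; since $c$ is odd, the relation $c^2=e_2$ combines with $b^2=0$ to turn $F\oplus cF$ into the local copy of $E$ sitting in the middle block of $M$, whence $\mathcal C$ is isomorphic to $E(M^{\underline{h}})$, i.e. to one of the four variants above. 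Finally, as $\mathcal C\subseteq E(\bar B/I)$ and $\bar B/I$ is a graded quotient of $\bar B$, Remark \ref{rem1} gives
\[
Id^G(\mathcal{A}_9^{g_1,g_2,g_3})=Id^G(\mathcal C)\supseteq Id^G(E(\bar B/I))\supseteq Id^G(E(\bar B)),
\]
which is the assertion. I expect the main obstacle to be the bookkeeping around the odd reflection $c$: one must ensure it is $\mathbb{Z}_2$-odd (so that $F\oplus cF$ yields a copy of $E$), that the twisted products $e_1j_2e_2c,\ ce_2j_3,\ j_1e_1j_2e_2c$ reproduce precisely the entries $e_{24},e_{45},e_{14}$ of $M$, and that the parity vector $(i_1,i_2,i_3)$ is routed to the correct variant of $\mathcal{A}_9$; the remaining computations are the same ``directed path'' manipulations already used for $\mathcal{A}_6$ and $\mathcal{A}_7$.
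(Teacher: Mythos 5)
Your proof follows essentially the same route as the paper's: generate $\bar B$ by $e_1, e_2, ce_2, j_1e_1, e_1j_2e_2, e_2j_3$, map it onto $M$ with an elementary $G\times\mathbb{Z}_2$-grading via the same assignments, exhibit the same twelve-element basis modulo the kernel, pass to the Grassmann envelope, and absorb the parity cases of $(i_1,i_2,i_3)$ into the variants $\mathcal{A}_{9,1}^{g_1,g_2,g_3},\mathcal{A}_{9,2}^{g_1,g_2,g_3},\mathcal{A}_{9,3}^{g_1,g_2,g_3}$ via Remark \ref{rem1}. The only differences are presentational (a single uniform grading tuple plus the $3\leftrightarrow 4$ swap instead of the paper's explicit case split, and the added remark that $c$ must be $\mathbb{Z}_2$-odd, which the paper leaves implicit), and your parity bookkeeping checks out.
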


	\begin{proof}
		Start by  assuming that the elements  $j_t$ have grading $(g_t,0)=g_t^0$ for $t=1,2,3$, and let $\bar B$ be the $G \times \mathbb{Z}_2$-graded subalgebra of $B$ generated by the homogeneous elements
		$$
		e_1,  e_2, ce_2,   j_1e_1, e_1j_2e_2, e_2j_3.
		$$
		such that $j_1e_1j_2e_2j_3\ne 0$.
		We consider the homomorphism of $G \times \mathbb{Z}_2$-graded algebras $$\varphi : \bar B\rightarrow UT(2,3)^{(e,g_1^0,(g_1g_2)^0,(g_1g_2)^{1},(g_1g_2g_3)^0)}$$  by setting $\varphi(e_1)=e_{22}$, $\varphi(e_2)=e_{33}+e_{44}$, $\varphi(ce_2)=e_{34}+e_{43}$, $\varphi(j_1e_1)=e_{12}$, $\varphi(e_1j_2e_2)=e_{23}$   and $\varphi(e_2j_3)=e_{35}$.
		Let $I=Ker\varphi$, then $\bar B/I\cong \varphi(\bar B)=M$ where

		$$
		M= M^{(0)} \oplus M^{(1)} = \left\{\begin{pmatrix}
			0 & a & b & d & e  \\
			0 & f & g & h & l \\
			0 & 0 & u & v & m \\
			0 & 0 & v & u & n \\
			0 & 0 & 0 & 0& 0
		\end{pmatrix}\right\} =  \left\{\begin{pmatrix}
			0 & a & b & 0 & e  \\
			0 & f & g & 0 & l \\
			0 & 0 & u & 0 & m \\
			0 & 0 & 0 & u & 0 \\
			0 & 0 & 0 & 0& 0
		\end{pmatrix}\right\}   \oplus \left\{ \begin{pmatrix}
			0 & 0 & 0 & d & 0  \\
			0 & 0 & 0 & h & 0 \\
			0 & 0 & 0 & v & 0 \\
			0 & 0 & v & 0 & n \\
			0 & 0 & 0 & 0& 0
		\end{pmatrix}\right\} .
		$$
		
		\medskip

		The following elements
		$$
		e_1,  e_2,\ ce_2,\   j_1e_1,\ e_1j_2e_2,\ e_2j_3,\ j_1e_1j_2e_2,
		$$
		$$
		j_1e_1j_2ce_2,\ j_1e_1j_2e_2j_3,\ e_1j_2ce_2,\ e_1j_2e_2j_3,\
		ce_2j_3
		$$
		
		\noindent \smallskip
		are linearly independent mod $I$ and form a basis of $\bar B/I$.
		By the same procedure of the previous lemmas it follows that
		$\mathcal{A}_{9}^{g_1,g_2,g_3} = E(M^{(e,g_1^0,(g_1g_2)^0,(g_1g_2)^{1},(g_1g_2g_3)^0)})$
		and so  $Id^G(\mathcal{A}_{9}^{g_1,g_2,g_3}) \supseteq Id^G(E(\bar B))$.
		\smallskip
		Similarly we obtain the same result when $j_1 \in J^{(0)}$ and $j_2, j_3 \in J^{(1)}.$
		Moreover if $j_1 \in J^{(0)}$ and $j_2, j_3$ are of different homogeneous degree with  respect to the  induced $\mathbb{Z}_2$-grading,
		it follows that  $Id^G(\mathcal{A}_{9,1}^{g_1,g_2,g_3})\supseteq  Id^G(E(\bar B))$.

		In case $j_1 \in J^{(1)}$ then we get that either $Id^G(\mathcal{A}_{9,2}^{g_1,g_2,g_3}) \supseteq Id^G(E(\bar B))$
		or $Id^G(\mathcal{A}_{9,3}^{g_1,g_2,g_3}) \supseteq Id^G(E(\bar B))$, according to whether $j_2, j_3$ are of the same or of different homogeneous degree with respect to the  induced $\mathbb{Z}_2$-grading.

		Since by Remark \ref{rem1}, $Id^G(\mathcal{A}_{9,1}^{g_1,g_2,g_3})=Id^G(\mathcal{A}_{9,2}^{g_1,g_2,g_3})=Id^G(\mathcal{A}_{9,3}^{g_1,g_2,g_3})=Id^G(\mathcal{A}_{9}^{g_1,g_2,g_3}),$ we get the desired result.
	\end{proof}
	
	\begin{Lemma} \label{lemmaA_{11}}
	Suppose that $B^{(1,0)}$ contains two orthogonal idempotents $e_1,e_2$, with $e_2\in F\oplus cF$, $c^2=1$, such that
		$j_1e_2j_2e_1j_3\ne 0$ for some  $j_1,j_2,j_3\in J(B)$. Then there exist $g_1,g_2,g_3 \in G$ and a $G\times\mathbb{Z}_2$-graded subalgebra $\bar B$ of $B$  such that $Id^G(E(\bar B))\subseteq Id^G(\mathcal{A}_{10}^{g_1,g_2,g_3})$.
	\end{Lemma}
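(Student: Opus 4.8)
The plan is to follow the blueprint of Lemma~\ref{lemmaA_{10}} almost verbatim, replacing the matrix $M$ by $N$ and the target $\mathcal{A}_9$ by $\mathcal{A}_{10}$; the only structural change is that in the word $j_1e_2j_2e_1j_3$ the idempotent $e_2\in F\oplus cF$ now stands immediately after $j_1$ rather than in the middle. By multilinearity I may assume $j_1,j_2,j_3$ are homogeneous, and I first treat the case $j_t\in J^{(0)}$ of degree $(g_t,0)=g_t^0$, $t=1,2,3$. Let $\bar B$ be the $G\times\mathbb{Z}_2$-graded subalgebra of $B$ generated by
$$
e_1,\ e_2,\ ce_2,\ j_1e_2,\ e_2j_2e_1,\ e_1j_3,
$$
which by hypothesis satisfies $j_1e_2j_2e_1j_3\neq 0$. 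I then define a homomorphism of $G\times\mathbb{Z}_2$-graded algebras
$$
\varphi:\bar B\longrightarrow UT(3,2)^{(e,\,g_1^0,\,g_1^1,\,(g_1g_2)^0,\,(g_1g_2g_3)^0)}
$$
by $\varphi(e_1)=e_{44}$, $\varphi(e_2)=e_{22}+e_{33}$, $\varphi(ce_2)=e_{23}+e_{32}$, $\varphi(j_1e_2)=e_{12}$, $\varphi(e_2j_2e_1)=e_{24}$ and $\varphi(e_1j_3)=e_{45}$, and check that $\varphi$ is well defined, grading preserving, and has image exactly $N$ (so that in particular $\varphi(j_1e_2j_2e_1j_3)=e_{12}e_{24}e_{45}=e_{15}\neq 0$). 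Setting $I=\mathrm{Ker}\,\varphi$ then gives $\bar B/I\cong N$.

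The second step is to write down the twelve cosets
$$
e_1,\ e_2,\ ce_2,\ j_1e_2,\ j_1ce_2,\ e_2j_2e_1,\ ce_2j_2e_1,\ e_1j_3,\ j_1e_2j_2e_1,\ e_2j_2e_1j_3,\ ce_2j_2e_1j_3,\ j_1e_2j_2e_1j_3,
$$
and to verify, by reading off their respective images $e_{44},\,e_{22}+e_{33},\,e_{23}+e_{32},\,e_{12},\,e_{13},\,e_{24},\,e_{34},\,e_{45},\,e_{14},\,e_{25},\,e_{35},\,e_{15}$ under $\varphi$, that they are linearly independent and hence form a basis of $\bar B\bmod I$, matching $\dim_F N=12$. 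Tensoring each coset with a suitable homogeneous element of $E$ according to its induced $\mathbb{Z}_2$-degree, exactly as in Lemmas~\ref{lemmaA_7}--\ref{lemmaA_{10}}, yields a basis of a $G$-graded subalgebra $\mathcal C$ of $E(\bar B/I)$ isomorphic to $\mathcal{A}_{10}^{g_1,g_2,g_3}=E(N^{(e,g_1^0,g_1^1,(g_1g_2)^0,(g_1g_2g_3)^0)})$. Since $\mathcal C$ is a subalgebra of a homomorphic image of $E(\bar B)$, I conclude
$$
Id^G(\mathcal{A}_{10}^{g_1,g_2,g_3})=Id^G(\mathcal C)\supseteq Id^G(E(\bar B/I))\supseteq Id^G(E(\bar B)),
$$
which is the required inclusion in this case.

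It then remains to dispose of the other homogeneous degrees of $j_1,j_2,j_3$. Because $e_2\in F\oplus cF$ with $c$ of $\mathbb{Z}_2$-degree $1$, the two-dimensional block carrying $e_2$ absorbs one unit of parity between $j_1$ and $j_2$, so the construction is governed only by the parity of $j_3$ and by whether $j_1$ and $j_2$ have the same or opposite induced $\mathbb{Z}_2$-degree. Running the same argument with the $\mathbb{Z}_2$-grading of $N$ shifted accordingly produces, in the four resulting cases, the algebras $\mathcal{A}_{10},\mathcal{A}_{10,1},\mathcal{A}_{10,2},\mathcal{A}_{10,3}$ of Remark~\ref{rem1}: for instance $j_1,j_2,j_3$ all even (or $j_1,j_2$ both odd and $j_3$ even) gives $\mathcal{A}_{10}$, $j_1,j_2$ of opposite parity with $j_3$ even gives $\mathcal{A}_{10,1}$, and $j_3$ odd gives $\mathcal{A}_{10,2}$ or $\mathcal{A}_{10,3}$ according as $j_1,j_2$ agree or differ in parity. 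Since Remark~\ref{rem1} gives $Id^G(\mathcal{A}_{10,1}^{g_1,g_2,g_3})=Id^G(\mathcal{A}_{10,2}^{g_1,g_2,g_3})=Id^G(\mathcal{A}_{10,3}^{g_1,g_2,g_3})=Id^G(\mathcal{A}_{10}^{g_1,g_2,g_3})$, in every case $Id^G(E(\bar B))\subseteq Id^G(\mathcal{A}_{10}^{g_1,g_2,g_3})$, completing the proof.

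The hard part, as in the companion lemmas, is the middle bookkeeping rather than any conceptual difficulty: one must confirm that $\varphi$ respects both the $G$- and the $\mathbb{Z}_2$-grading on every generator, that $\mathrm{Ker}\,\varphi$ consists of precisely the products leaving the path $1\to\{2,3\}\to 4\to 5$ (so that the twelve listed cosets are genuinely independent and nothing further survives in $\bar B/I$), and that in the Grassmann-envelope step each coset is tensored with a Grassmann monomial of the correct parity, so that $\mathcal C$ lands on the prescribed homogeneous components of $E(N^{\cdots})$. Once these grading computations are checked, the inclusion of $T_G$-ideals is purely formal.
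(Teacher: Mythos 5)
Your proposal is correct and follows essentially the same route as the paper: the paper's own proof simply says to repeat the argument of Lemma \ref{lemmaA_{10}} with the generators $e_1, e_2, ce_2, j_1e_2, e_2j_2e_1, e_1j_3$ mapping into $N\subseteq UT(3,2)$ and then invokes Remark \ref{rem1} for the parity cases, which is exactly what you have spelled out (with correct gradings, kernel, and the twelve-element basis). The only difference is that you have made explicit the details the paper leaves implicit.
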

	\begin{proof}
		By following the proof of Lemma \ref{lemmaA_{10}} we get that,
		if $j_3\in J^{(0)}$ and $j_1, j_2 \in J^{(0)}$ (or $J^{(1)}$),
		then $Id^G(\mathcal{A}_{10}^{g_1,g_2,g_3}) \supseteq Id^G(E(\bar B))$. On the other hand, if $j_3\in J^{(0)}$ and $j_1, j_2$ are of different homogeneous degree with respect to the  induced $\mathbb{Z}_2$-grading,
		then $Id^G(\mathcal{A}_{10,1}^{g_1,g_2,g_3}) \supseteq Id^G(E(\bar B))$. Analogously, if $j_3\in J^{(1)},$ we get that either $Id^G(\mathcal{A}_{10,2}^{g_1,g_2,g_3}) \supseteq Id^G(E(\bar B))$ or $Id^G(\mathcal{A}_{10,3}^{g_1,g_2,g_3}) \supseteq Id^G(E(\bar B))$.
		
		By  Remark \ref{rem1} we have that  $Id^G(\mathcal{A}_{10,1}^{g_1,g_2,g_3})=Id^G(\mathcal{A}_{10,2}^{g_1,g_2,g_3})=Id^G(\mathcal{A}_{10,3}^{g_1,g_2,g_3})=Id^G(\mathcal{A}_{10}^{g_1,g_2,g_3})$, and we are done.
	\end{proof}
	
	\begin{Lemma} \label{lemmaA_{12}}
		If $B^{(1,0)}$ contains two orthogonal idempotents $e_1,e_2$, with $e_2\in F\oplus cF$, $c^2=1$, such that
		$e_1j_1e_2j_2e_1\ne 0$, for some $j_1,j_2\in J(B)$,   then there exist $g_1,g_2 \in G$ and a $G\times\mathbb{Z}_2$-graded subalgebra $\bar B$ of $B$  such that $Id^G(E(\bar B))\subseteq Id^G( \mathcal{A}_{11}^{g_1,g_2})$ or $Id^G(E(\bar B))\subseteq Id^G( \mathcal{A}_{12}^{g_1,g_2})$.
	\end{Lemma}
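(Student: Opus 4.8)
The plan is to follow the template of Lemmas \ref{lemmaA_9} and \ref{lemmaA_{10}}: build an explicit $G \times \mathbb{Z}_2$-graded homomorphism from a finitely generated subalgebra of $B$ onto a matrix realization of $P$, and then pass to Grassmann envelopes. By linearity I may assume $j_1, j_2$ are homogeneous, of degrees $(g_1, i_1)$ and $(g_2, i_2)$ with $i_1, i_2 \in \mathbb{Z}_2$. I would let $\bar B$ be the $G \times \mathbb{Z}_2$-graded subalgebra generated by
$$
e_1,\ e_2,\ ce_2,\ e_1 j_1 e_2,\ e_2 j_2 e_1 .
$$
Here $e_1$ is destined to occupy the two equal corners (rows and columns $1$ and $4$) of $P$, while $e_2$ and $ce_2$ supply the identity and the off-diagonal part of the central symmetric $2 \times 2$ block in rows and columns $2,3$. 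Note that the roles of $F$ and $F \oplus cF$ are opposite to those in Lemma \ref{lemmaA_9}: here the repeated corner $e_1$ lies in $F$ while the middle idempotent $e_2$ lies in $F \oplus cF$.

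Next I would define $\varphi \colon \bar B \to UT_4(F)^{\underline{h}}$, where $\underline{h} = (e,\, g_1^0,\, g_1^1,\, (g_1 g_2)^{i_1+i_2})$ places the fourth diagonal entry at $\mathbb{Z}_2$-degree $i_1+i_2$, by
$$
\varphi(e_1) = e_{11}+e_{44}, \qquad \varphi(e_2) = e_{22}+e_{33}, \qquad \varphi(ce_2) = e_{23}+e_{32},
$$
sending $e_1 j_1 e_2$ to the matrix unit of $\mathbb{Z}_2$-degree $i_1$ among $\{e_{12}, e_{13}\}$ and $e_2 j_2 e_1$ to the matrix unit of $\mathbb{Z}_2$-degree $i_2$ among $\{e_{24}, e_{34}\}$. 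A direct check on the defining relations of the generators (for instance $(ce_2)^2 = e_2$, which becomes $(e_{23}+e_{32})^2 = e_{22}+e_{33}$) shows $\varphi$ is a homomorphism of $G \times \mathbb{Z}_2$-graded algebras whose image is exactly $P^{\underline{h}}$. The kernel $I = \ker\varphi$ is the graded ideal generated by the returning word $e_2 j_2 e_1 j_1 e_2$ together with its $c$-twist, and one checks that the eight elements
$$
e_1,\ e_2,\ ce_2,\ e_1 j_1 e_2,\ e_1 j_1 ce_2,\ e_2 j_2 e_1,\ ce_2 j_2 e_1,\ e_1 j_1 e_2 j_2 e_1
$$
are linearly independent and form a basis of $\bar B/I$, so that $\bar B/I \cong \varphi(\bar B) = P^{\underline{h}}$. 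In contrast with Lemma \ref{lemmaA_9}, the returning corner $e_1$ now lies in $F$ rather than in $F \oplus cF$, so the long word $e_1 j_1 e_2 j_2 e_1$ contributes the single basis vector $e_{14}$ with no $c$-twisted companion; this accounts for $\dim_F P = 8$ instead of $9$.

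The crux is the choice of $\mathbb{Z}_2$-degree $i_1+i_2$ on the fourth diagonal entry, which is exactly what forces the two arrows to compose to a nonzero $e_{14}$. If $i_1 = 0$ then $e_1 j_1 e_2 \mapsto e_{12}$ and $e_2 j_2 e_1 \mapsto e_{24}$, with $e_{12}e_{24} = e_{14}$; if $i_1 = 1$ then $e_1 j_1 e_2 \mapsto e_{13}$ and $e_2 j_2 e_1 \mapsto e_{34}$, with $e_{13}e_{34} = e_{14}$. In either case $e_{14} \ne 0$, matching the hypothesis $e_1 j_1 e_2 j_2 e_1 \ne 0$, and $e_{14}$ carries $\mathbb{Z}_2$-degree $i_1+i_2$, precisely the degree of the word $e_1 j_1 e_2 j_2 e_1$. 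Passing to Grassmann envelopes as in the previous lemmas gives $E(\bar B/I) \cong E(P^{\underline{h}})$, which by the definitions of the two algebras equals $\mathcal{A}_{11}^{g_1,g_2}$ when $i_1+i_2 \equiv 0$ and $\mathcal{A}_{12}^{g_1,g_2}$ when $i_1+i_2 \equiv 1$. Since $\bar B/I$ is a graded quotient of $\bar B$ we obtain $Id^G(E(\bar B)) \subseteq Id^G(E(\bar B/I))$, hence $Id^G(E(\bar B)) \subseteq Id^G(\mathcal{A}_{11}^{g_1,g_2})$ in the first case and $Id^G(E(\bar B)) \subseteq Id^G(\mathcal{A}_{12}^{g_1,g_2})$ in the second, which is the required dichotomy. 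The main obstacle is precisely this bookkeeping of $\mathbb{Z}_2$-degrees: there is no analogue of Remark \ref{rem1} collapsing $\mathcal{A}_{11}$ and $\mathcal{A}_{12}$, so the parity of $i_1+i_2$ genuinely decides the target algebra and must be tracked carefully to keep the composite arrow nonzero and in the correct homogeneous slot.
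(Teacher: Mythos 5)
Your proof is correct and takes essentially the same route as the paper: the same five generators for $\bar B$, the same graded homomorphism onto $P$ with $\varphi(e_1)=e_{11}+e_{44}$, $\varphi(e_2)=e_{22}+e_{33}$, $\varphi(ce_2)=e_{23}+e_{32}$, the same eight-element basis of $\bar B/I$, and the same parity dichotomy deciding between $\mathcal{A}_{11}^{g_1,g_2}$ and $\mathcal{A}_{12}^{g_1,g_2}$ (the paper just treats the cases $i_1=i_2=0$, $i_1=i_2=1$ and $i_1\ne i_2$ separately where you track the single parameter $i_1+i_2$). The only slip is the codomain: since $\varphi(ce_2)=e_{23}+e_{32}$ is not upper triangular, the target should be $UT(1,2,1)^{\underline{h}}$ (as in the paper) rather than $UT_4(F)^{\underline{h}}$.
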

	
	\begin{proof}
		First assume that the elements  $j_1,j_2$ are  homogeneous of degree $g_1^0 =(g_1,0)$ and $g_2^0 =(g_2,0)$, respectively.
		Let $\bar B$ be the $G\times\mathbb{Z}_2$-graded subalgebra of $B$ generated by the homogeneous elements
		$$
		e_1,  e_2,\ ce_2,\   e_1j_1e_2,\ e_2j_2e_1.
		$$
		We consider the homomorphism of $G \times \mathbb{Z}_2$-graded algebras  $\varphi : \bar B\rightarrow UT(1,2,1)^{(e,g_1^0,g_1^1,(g_1g_2)^0)}$ defined by setting $\varphi(e_1)=e_{11}+e_{44}$, $\varphi(e_2)=e_{22}+e_{33}$, $\varphi(ce_2)=e_{23}+e_{32}$ and $\varphi(e_1j_1e_2)=e_{12}$, $\varphi(e_2j_2e_1)=e_{24}$.
		Then $I=Ker\varphi$ is the ideal of $\bar B$ generated by
		$
		e_2j_2e_1j_1e_2, e_1j_1ce_2j_2e_1
		$
		and $\bar B/I\cong \varphi(\bar B)= P$, where

		$$
		P= \left\{\begin{pmatrix}
			a & b & d & e  \\
			0 & u & v  & h \\
			0 & v & u & m \\
			0 & 0 &  0  & a
		\end{pmatrix} \right\} =  \left\{\begin{pmatrix}
			a & b & 0 & e \\
			0  & u & 0  & h \\
			0 & 0 & u & 0\\
			0 &  0 &  0 & a
		\end{pmatrix}\right\}   \oplus \left\{\begin{pmatrix}
			0 & 0 & d & 0  \\
			0& 0 & v  & 0 \\
			0 & v & 0 & m \\
			0 &  0 & 0  & 0
		\end{pmatrix}\right\}.
		$$
		
		\smallskip
		
		The following elements
		$$
		e_1,  e_2,\ ce_2,\   e_1j_1e_2,\ e_2j_2e_1,\ e_1j_1ce_2,\ ce_2j_2e_1,\ e_1j_1e_2j_2e_1
		$$
		are linearly independent and form a basis of $\bar B$ mod $I.$  We identify these representatives with the corresponding cosets.
		The elements
		$$
		1\otimes e_1,  1\otimes e_2,  1\otimes ce_2, 1 \otimes e_1j_1e_2, 1 \otimes  e_2j_2e_1, 1 \otimes e_1j_1ce_2,  1 \otimes ce_2j_2e_1,  1 \otimes e_1j_1e_2j_2e_1
		$$
		form a basis of $E(\bar B/I)$ isomorphic to $\mathcal{A}_{11}^{g_1,g_2}$.
		Thus  $Id^G( \mathcal{A}_{11}^{g_1,g_2}) \supseteq Id^G(E(\bar B))$.
		
		If $j_1, j_2 \in J^{(1)}$ we obtain the same result.
		If $j_1\in J^{(0)}$ and $j_2 \in J^{(1)}$ (or $j_1\in J^{(1)}$ and $j_2 \in J^{(0)})$, we get a homomorphism of  $G\times\mathbb{Z}_2$-graded algebras $\varphi : \bar B\rightarrow UT(1,2,1)^{(e,g_1^0,g_1^1, (g_1g_2)^1)}$ such that $\bar B/I\cong \varphi(\bar B)= P$ where
		
		$$P=\left\{\begin{pmatrix}
			a & b & d & e  \\
			0& u & v  & h \\
			0 & v & u & m \\
			0 &  0 & 0  & a
		\end{pmatrix}\right\} =  \left\{\begin{pmatrix}
			a & b & 0 & 0  \\
			0& u & 0  & 0 \\
			0& 0 & u & m\\
			0 &0   & 0  & a
		\end{pmatrix}\right\}  \oplus \left\{\begin{pmatrix}
			0 & 0 & d & e  \\
			0 & 0 & v  & h \\
			0 & v & 0 & 0 \\
			0 &  0 &  0 & 0
		\end{pmatrix}\right\}.
		$$
		
		\smallskip
		\noindent
		Its Grassmann envelope is $E(P^{(e,g_1^0,g_1^1, (g_1g_2)^1)})=\mathcal{A}_{12}^{g_1,g_2}$.
		Thus  $Id^G( \mathcal{A}_{12}^{g_1,g_2}) \supseteq Id^G(E(\bar B))$, and we are done.
	\end{proof}

	\section{The main results}
	The aim of  this section is to characterize the varieties of $G$-graded algebras with proper central $G$-exponent greater or equal to two.
	
	\begin{Theorem} \label{teorema1}
		Let $F$ be a field of characteristic zero and $\mathcal{V}$ a variety of  $G$-graded algebras over $F$.
		Then  $exp^{G,\delta}(\mathcal V)> 2$ if and only if  at least one of the G-graded algebras $$\mathcal{A}^{g,i}_1,  \mathcal{A}^p_2, \mathcal{A}_3, \mathcal{A}_4, \mathcal{A}_5^{i,j}, \mathcal{A}_6^{g_1,g_2,g_3}, \mathcal{A}_7^{g_1,g_2,g_3, g_4}, \mathcal{A}_{8}^{g_1,g_2}, \mathcal{A}_{9}^{g_1,g_2,g_3}, \mathcal{A}_{10}^{g_1,g_2,g_3},\mathcal{A}_{11}^{g_1,g_2}, \mathcal{A}_{12}^{g_1,g_2} $$ belongs to $\mathcal{V}$.
	\end{Theorem}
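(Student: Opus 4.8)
The plan is to prove the two implications separately, the reverse being routine while the forward direction carries all the content. For the implication ``$\Leftarrow$'', suppose one of the listed algebras $\mathcal{A}$ lies in $\mathcal{V}$, so that $var^G(\mathcal{A})\subseteq\mathcal{V}$. Since the proper central $G$-exponent does not decrease when the variety is enlarged (by the description of $exp^{G,\delta}$ given in \cite{LMR}), one has $exp^{G,\delta}(\mathcal{V})\ge exp^{G,\delta}(\mathcal{A})$. By the computation recorded in the remark preceding Lemma 3.1, together with Lemma 3.1 itself, every algebra on the list has proper central $G$-exponent at least $3$ (namely $3$ for $\mathcal{A}_6,\dots,\mathcal{A}_{12}$, $4$ for $\mathcal{A}_1^{g,i},\mathcal{A}_3,\mathcal{A}_4,\mathcal{A}_5^{i,j}$, and the prime $p\ge 3$ for $\mathcal{A}_2^p$), whence $exp^{G,\delta}(\mathcal{V})>2$.

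For ``$\Rightarrow$'' assume $exp^{G,\delta}(\mathcal{V})>2$. As the exponent is finite, $\mathcal{V}$ is a PI-variety, so I would first invoke Theorem 2.1 to write $\mathcal{V}=var^G(E(B))$ for a finite dimensional $G\times\mathbb{Z}_2$-graded algebra $B=\bar B+J$, with $\bar B=B_1\oplus\cdots\oplus B_q$ semisimple and $J=J(B)$. Then, by the characterization of the proper central $G$-exponent in \cite{LMR}, I would fix a centrally admissible subalgebra $C=B_{i_1}\oplus\cdots\oplus B_{i_k}$ with $\dim_F C\ge 3$, carrying a proper central $G$-polynomial of $E(B)$ whose non-zero evaluation meets each $E(B_{i_h})$; the identities $1_{B_{i_h}}$ are orthogonal idempotents of degree $e=(1_G,0)$ lying in $B^{(1,0)}$.

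The heart of the proof is the structural analysis of $C$, which I would split according to whether the central value is produced inside a single component or by threading the radical between several. If $k=1$, then $E(B_{i_1})$ itself has a proper central polynomial and $\dim B_{i_1}\ge 3$; writing $B_{i_1}\cong F^{\alpha}H\otimes M_m(F)$ by the Bahturin--Sehgal--Zaicev classification (Theorem 2.2) and passing to a suitable graded subalgebra $\bar B$, a short finite check on $(H,\alpha,m)$ yields $Id^G(E(\bar B))\subseteq Id^G(\mathcal{A})$ for one of $\mathcal{A}_1^{g,i},\mathcal{A}_2^p,\mathcal{A}_3,\mathcal{A}_4,\mathcal{A}_5^{i,j}$: a matrix block ($m\ge 2$) yields $\mathcal{A}_1$; a commutative $F^{\alpha}H$ with $|H|$ a prime yields $\mathcal{A}_2^p$; and $|H|=4$ yields $\mathcal{A}_3,\mathcal{A}_4$ or $\mathcal{A}_5$ according to $H\cong C_4$ or $C_2\times C_2$ and to the $\mathbb{Z}_2$-component of the grading. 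If $k\ge 2$, the central value is obtained by a product passing through $J$ between distinct components and returning to a diagonal, hence central, position. Reading off the orthogonal idempotents and the connecting radical elements of this product, I would check that the configuration satisfies the hypothesis of exactly one of Lemmas 4.1--4.6: three one-dimensional waypoints closed cyclically ($e_1Je_2Je_3Je_1$) give $\mathcal{A}_6$ and with free radical ends give $\mathcal{A}_7$, while the presence of a two-dimensional component (an idempotent $e_2\in F\oplus cF$, $c^2=1$, of odd $\mathbb{Z}_2$-degree) together with the various return and open patterns gives $\mathcal{A}_8,\dots,\mathcal{A}_{12}$. In every case the cited lemma produces a $G\times\mathbb{Z}_2$-graded subalgebra $\bar B$ of $B$ with $Id^G(E(\bar B))\subseteq Id^G(\mathcal{A})$, so that $\mathcal{A}\in var^G(E(\bar B))\subseteq var^G(E(B))=\mathcal{V}$.

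The main obstacle is the completeness of this case analysis. For $k=1$ one must ensure that no simple component of dimension at least $3$ escapes the five families $\mathcal{A}_1$--$\mathcal{A}_5$; this rests on Theorem 2.2 together with explicit knowledge of which Grassmann envelopes of small simple algebras admit a proper central $G$-polynomial, and on the fact that larger components contain one of these in their variety. For $k\ge 2$ the delicate point is that requiring the evaluation to be \emph{central}, not merely non-zero, forces the radical chain to close up into a diagonal position; it is precisely this constraint---together with the bookkeeping of the even and odd $\mathbb{Z}_2$-degrees of the idempotents and radical elements---that singles out the finitely many cyclic and palindromic patterns of Lemmas 4.1--4.6, and verifying that these patterns are exhaustive is the crux of the argument.
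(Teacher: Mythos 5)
Your overall strategy coincides with the paper's: the reverse implication via the exponent computations for the listed algebras, and the forward implication via the Aljadeff--Belov--Sviridova theorem, the Wedderburn--Malcev decomposition, the Bahturin--Sehgal--Zaicev classification of the simple components, and a case analysis feeding into Lemmas \ref{lemmaA_7}--\ref{lemmaA_{12}}. Two points, however, leave genuine gaps. The first is the ordering of your case split on the number $k$ of components of the centrally admissible subalgebra $C$. You invoke the classification of simple algebras only when $k=1$; but when $k\ge 2$ a component $B_{i_h}$ of $C$ may still be $M_{m}(F^{\alpha}H)$ with $m\ge 2$ or $|H|\ge 3$, in which case the ``waypoints'' of your radical chain are not one-dimensional idempotents or elements of $F\oplus cF$, and your pattern analysis for $k\ge 2$ does not apply. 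The paper avoids this by first disposing of every large simple component of $B$ itself (not of the admissible subalgebra): since $E(B)\supseteq E(B_i)$, a large $B_i$ already forces one of $\mathcal{A}_1^{g,i},\mathcal{A}_2^{p},\mathcal{A}_3,\mathcal{A}_4,\mathcal{A}_5^{i,j}$ into $\mathcal{V}$ irrespective of central admissibility, and only afterwards may one assume every $B_i$ is $F$ or $F\oplus cF$, so that a centrally admissible subalgebra of dimension at least $3$ necessarily has several small components. This is a reordering and is easily repaired, but as written your $k\ge 2$ branch is not exhaustive.

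The second gap is the one you yourself call the crux, and it is not resolved by the appeal to centrality ``forcing the radical chain to close up.'' After the configurations $e_1j_1e_2j_2e_3j_3e_1\ne 0$, $j_1e_1j_2e_2j_3e_3j_4\ne 0$ (and their analogues in the mixed case) have been handled by the lemmas of Section~4, one must still show that if \emph{no} monomial of the central evaluation $\bar f$ admits such a pattern, a contradiction results. The paper proves this by a concrete computation: writing $\bar f=\bar e_1\bar f_1+\bar f_2\bar e_1+\bar e_2\bar f_3+\cdots$ according to which idempotent $\bar e_i=1\otimes e_i$ begins or ends the evaluation of each monomial, and combining the relations $\bar e_i\bar f=\bar f\bar e_i=0$ (which hold because $\bar f$ is a central value annihilated by the idempotents) with the assumed vanishing of the patterns, one derives $\bar f=0$, contradicting that $f$ is a \emph{proper} central $G$-polynomial. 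Without this elimination argument the forward implication is unproved: stating that ``verifying that these patterns are exhaustive is the crux'' identifies the problem but supplies no argument for it, and this is precisely the part of the proof that does not follow formally from the lemmas.
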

	
	\begin{proof}
	 By Lemma \ref{exponent Ai} and its previous remark it  is clear that, if at least one of the $G$-graded algebras in the statement of the theorem  belongs to $\mathcal{V}$ then $exp^{G,\delta}(\mathcal V)> 2$.
		
	Suppose that  $exp^{G,\delta}(\mathcal V)> 2$. Assume, as we may, that $\mathcal V = var^G(A)= var^G(E(B))$ where $E(B)=E^{(0)}\otimes B^{(0)} \oplus E^{(1)}\otimes B^{(1)}$ is the Grassmann envelope of a finite dimensional
		$G\times {\mathbb Z}_2$-graded algebra $B=B^{(0)}\oplus B^{(1)}$, with  $B^{(0)}=\oplus_{g \in G}B^{(g,0)}$ and $B^{(1)}=\oplus_{g \in G}B^{(g,1)}$.
		Write $B=B_1\oplus \cdots \oplus B_q +J$, with the $B_i$'s simple $G\times {\mathbb Z}_2$-algebras, $1\le i\le q$.
		By Theorem \ref{BSZ},  $B_i= M_{k_i}(F^{\alpha_i} H_i)=F^{\alpha_i} H_i\otimes M_{k_i}(F)$, for $1\le i\le q,$ where $H_i$ is a subgroup of
		$G\times {\mathbb Z}_2$ and  $\alpha_i$ is a  $2$-cocycle.
		
		Suppose first that there exists some $B_i= M_{k_i}(F^{\alpha_i} H_i)$ with $k_i\ge 2$ and grading ${\underline{m_i} }=(e,m_2, \ldots , m_{k_i})\in (G\times {\mathbb Z}_2)^{k_i}.$
		Then $B_i$ contains the graded subalgebra $M_2(F)$   with elementary $G\times {\mathbb Z}_2$-grading induced by ${\underline{m_i}}.$
		Hence
		$$
		E(B)\supseteq E(B_i)\supseteq E(M_2(F)^{\underline{m_i}})
		$$
		and so  $Id^G(E(B))\subseteq Id^G(\mathcal{A}^{g,i}_1),$   for some $g\in G$ and $i\in {\mathbb Z}_2$.
		
		Therefore we may assume that, for $1\le i\le q$, $B_i= F^{\alpha_i} H_i$ with $H_i$ a subgroup of
		$G\times {\mathbb Z}_2$ and  $\alpha_i$ a $2$-cocycle.

		Suppose first that $(g, 0)\in H_i$ for some
		$g\in G$ of order a prime $p>2.$ Let $C_{p,0}=<(g,0)>$, then
		$B_i$ contains the graded subalgebra $F^{\alpha_i}C_{p,0}$  and we may assume that the cocycle $\alpha_i$ is trivial on $C_{p,0}.$
		Thus
		$$
		E(B)\supseteq E(B_i)\supseteq E(FC_{p,0})=E^{(0)}\otimes FC_{p,0}
		$$
		and since $E^{(0)}\otimes FC_{p,0}$ has the same $G$-polynomial identities as $FC_{p}=\mathcal{A}_2^p$ we are done in this case.
		
		Suppose now that $(g, 1)\in H_i$, with $g\in G$ of order a prime $p>2$. Then the element $(g^2, 0)\in H_i$ and we are in the previous case.
		
		Now assume that $H_i$ has order $2^k$, with $k>1$. If there exists $g'\in H_i$ of order $4$, then we
		have two possibilities: either $g' = (g, 0)$ or $g' = (g, 1)$, with  $g$ of order $4$. In the first
		case we get $\mathcal{A}_3 \in \mathcal{V}$ whereas in the second one we have $\mathcal{A}_4 \in \mathcal{V}$.
		
		On the other hand, if there are no elements of order $4$ in $H_i$, then  there exist distinct elements $g'=(g,r)$, $h'=(h,s) \in H_i$,   with $g,h \in G$  distinct elements of order 2.  Let $H_{r,s}=<g',h'>$ be the subgroup of $H_i$ generated by $g'$ and $h'.$
		It is easily checked that $F^{\alpha_i} H_{r,s}$ is a subalgebra of $B_i$ and therefore $\mathcal{A}_5^{r,s}=E(F^{\alpha_i} H_{r,s})\in \mathcal{V}$.
		
		Hence we may assume that, for  $1\le i\le q$, either $B_i= F$, with trivial grading, or $B_i= F\oplus cF$  with  grading $(F, cF)$ and $c^2=1$.

		Let  $D=B_1\oplus \cdots\oplus B_r$ be a centrally admissible $G\times \mathbb{Z}_2$-graded subalgebra of $B$ of maximal dimension, then
		by definition there exists $f$, a proper central $G$-polynomial  of $E(B)$, and an evaluation
		$0\ne f(b_1, \ldots, b_r, b_{r+1}, \ldots, b_n)\in Z(E(B))$, where $b_1\in E(B_1), \ldots, b_r\in E(B_r)$,
		$b_{r+1}, \ldots, b_n\in E(B)$.
		
		Since $exp^{G,\delta}(E(B))= \dim D=d\ge 3$ at least one of the following three cases occurs:
		\begin{itemize}
			\item[1)]
			$D$ contains at least three simple components that must be of the type $F$ or $F\oplus cF$;
			\item [2)]
			$D$ contains exactly one copy of $F$ and one of $F\oplus cF$;
			
			\item[3)]
			$D$ contains only two copies of $F\oplus cF$.
		\end{itemize}
		
		Let's assume first that Case 1 occurs. $D$ contains  three orthogonal idempotents $e_1,e_2,e_3$ homogeneous of degree $(1,0)$.
		Since $f$ is the proper central $G$-polynomial corresponding to $D$ we have that in a non-zero evaluation $\bar f$ of $f$  at least three variables are evaluated in
		$$
		a_1\otimes e_1,\  a_2\otimes e_2,\  a_3\otimes e_3,
		$$
		where $a_1,a_2,a_3\in E^{(0)}$ are distinct elements.
		Since $E^{(0)}$ is central in $E$ and $f$ is multilinear, we may assume that $a_1=a_2=a_3=1$
		and, so,  the elements $1\otimes e_1,\  1\otimes e_2,\ 1\otimes e_3$ appear in the evaluation of $f$.
		
		Suppose first that a monomial of $f$ can be evaluated into a product of the type
		$$
		(1\otimes e_1)(b_1 \otimes j_1)(1\otimes e_2)(b_2 \otimes j_2) (1\otimes e_3)(b_3 \otimes j_3)(1\otimes e_1)\ne 0,
		$$
		for some $b_1\otimes j_1,b_2\otimes j_2,b_3\otimes j_3\in E^{(0)}\otimes  J^{(0)}\cup E^{(1)} \otimes  J^{(1)}.$
		Hence
		$$
		b_1b_2b_3\otimes e_1j_1e_2j_2e_3j_3e_1\ne 0,
		$$
		and this says that $e_1j_1e_2j_2e_3j_3e_1\ne 0$, for some homogeneous elements $j_1,j_2,j_3\in J$ of degree $h_1=(g_1,i_1)$, $h_2=(g_2,i_2)$ and $h_3=(g_3,i_3)$, respectively.
		By Lemma \ref{lemmaA_7} we get that there exists a $G\times \mathbb{Z}_2$-graded subalgebra $\bar B$ of $B$  such that $Id^G(E(\bar B))\subseteq Id^G(\mathcal{A}_6^{g_1,g_2,g_3})$ and we are done in this case. This clearly holds for any permutation of the $e_i$'s, $ i=1,2,3$.
		
		Next suppose that there exists an evaluation of a monomial of $f$ containing a product of the type
		$$
		(b_1 \otimes j_1)(1\otimes e_1)(b_2\otimes j_2) (1\otimes e_2)
		(b_3\otimes j_3)(1\otimes e_3)(b_4 \otimes j_4)\ne 0,
		$$
		for some $b_1\otimes j_1,b_2\otimes j_2,b_3\otimes j_3, b_4\otimes j_4\in E^{(0)}\otimes  J^{(0)}\cup E^{(1)} \otimes  J^{(1)}.$
		Then
		$$
		b_1b_2b_3b_4\otimes j_1e_1j_2e_2j_3e_3j_4\ne 0,
		$$
		and, so, $j_1e_1j_2e_2j_3e_3j_4\ne 0$, for some homogeneous elements $j_1,j_2,j_3, j_4\in J$  of degree $h_t=(g_t,i_t)$,  $1\le t\le 4$.
		By Lemma \ref{lemmaA_8} there exists a $G \times\mathbb{Z}_2$-graded subalgebra $\bar B$ of $B$  such that $Id^G(E(\bar B))\subseteq Id^G(\mathcal{A}_7^{g_1,g_2,g_3, g_4})$ and we are done.
		
		Hence we may assume that in $\bar f$, the evaluation of a monomial of $f$ never starts and ends with elements of $J$.
		Since $\bar f$ is central we may also assume that  $(1\otimes e_i)\bar f=\bar f (1\otimes e_i)=0$, $1\le i\le 3$. Let us set $\bar e_i=1\otimes e_i$, $1\le i\le 3$, and write
		$$
		\bar f = \bar e_1\bar f_1+ \bar f_2 \bar e_1+ \bar e_2 \bar f_3 + \bar f_4 \bar e_2+ \bar e_3\bar f_5+ \bar f_6 \bar e_3,
		$$
		
		\noindent where $\bar e_i\bar f_{2i-1}$  is the sum of the evaluations of the monomials of $f$ starting  with $\bar e_i$, and
		$\bar f_{2i} \bar e_i$ is the  sum of the evaluations of the remaining monomials of $f$ ending with $\bar e_i$, with $i=1,2,3$.

		By the first part of the proof we may assume that  $\bar e_i\bar f_j \bar e_i=0$ for $i \neq j$. Since $\bar e_1\bar f=\bar f \bar e_2=\bar f \bar e_3=0$,  we get
		$$
		0=\bar e_1\bar f \bar e_2=\bar e_1\bar f_1 \bar e_2 + \bar e_1\bar f_4 \bar e_2,
		$$
		$$
		0=\bar e_1\bar f \bar e_3=\bar e_1\bar f_1 \bar e_3 + \bar e_1\bar f_6 \bar e_3.
		$$
		Moreover from
		$$
		\bar e_1\bar f+\bar f \bar e_2+\bar f \bar e_3=0
		$$
		
		\noindent  by using the previous equalities we obtain that

		$$
		\bar e_1\bar f_1 + \bar f_4 \bar e_2 + \bar f_6 \bar e_3=-\bar e_3 \bar f_5 \bar e_2 - \bar e_2 \bar f_3 \bar e_3.
		$$
		
		\bigskip
		\noindent Similarly,
		since $\bar f \bar e_1=\bar e_2 \bar f =\bar e_3 \bar f =0$ we get
		$$
		\bar f_2 \bar e_1 + \bar e_2 \bar f_3 +\bar e_3 \bar f_5 =-\bar e_2 \bar f_6 \bar e_3 - \bar e_3 \bar f_4 \bar e_2.
		$$
		In conclusion we have that
		$$
		\bar f= \bar e_1 \bar f_1+ \bar f_2 \bar e_1  +  \bar e_2 \bar f_3+ \bar f_4 \bar e_2+ \bar e_3 \bar f_5 + \bar f_6 \bar e_3
		=-\bar e_3 \bar f_5 \bar e_2 - \bar e_2 \bar f_3 \bar e_3-\bar e_2 \bar f_6 \bar e_3 - \bar e_3 \bar f_4 \bar e_2.
		$$
		Now, since
		$
		0=\bar f \bar e_2=-\bar e_3\bar f_5 \bar e_2 - \bar e_3 \bar f_4 \bar e_2
		$ and $
		0=\bar f \bar e_3= - \bar e_2 \bar f_3 \bar e_3-\bar e_2 \bar f_6 \bar e_3$
		we obtain
		$\bar f= 0,$
		a contradiction.
		This completes Case 1.

		Now, we consider the second case. Let $D=Fe_1\oplus (F\oplus cF)e_2.$ It is clear that the central polynomial $f$ has a non-zero evaluation $\bar f$ in which
		at least one variable is evaluated in $1\otimes e_1$
		and one variable  in  $(E^{(0)}\otimes e_2)\oplus (E^{(1)}\otimes ce_2).$
		
		Assume that there exists an evaluation of a monomial of $f$ such that
		$$
		(1\otimes e_1)(b_1\otimes j_1)(a_1\otimes e_2)(b_2\otimes j_2)(1\otimes e_1) \ne 0,
		$$
		for some $b_1\otimes j_1,b_2\otimes j_2, \in E^{(0)}\otimes  J^{(0)}\cup E^{(1)} \otimes  J^{(1)}$ and $a_1 \in E^{(0)}$.
		Then
		$$
		b_1a_1b_2\otimes e_1j_1e_2j_2e_1 \ne 0
		$$
		and by Lemma \ref{lemmaA_{12}} we obtain that there exists a $G\times \mathbb{Z}_2$-graded subalgebra $\bar B$ of $B$  such that either
		$Id^G(E(\bar B))\subseteq Id^G(\mathcal{A}_{11}^{g_1,g_2})$ or $Id^G(E(\bar B))\subseteq Id^G(\mathcal{A}_{12}^{g_1,g_2})$ for some $g_1,g_2 \in G$.

		If there exists an evaluation of a monomial of $f$ such that
		$$
		(1\otimes e_1)(b_1\otimes j_1)(a_1\otimes ce_2)(b_2\otimes j_2)(1\otimes e_1) \ne 0,
		$$ for some $b_1\otimes j_1,b_2\otimes j_2, \in E^{(0)}\otimes  J^{(0)}\cup E^{(1)} \otimes  J^{(1)}$ and $a_1 \in E^{(1)}$,
		then
		$
		e_1j_1ce_2j_2e_1 \ne 0.
		$
		If we put $j'_1 = j_1c$
		we are done by the previous case.

		Now suppose there exists an evaluation of a monomial of $f$ such that
		$$
		(a_1\otimes \alpha e_2)(b_1\otimes j_1)(1\otimes e_1)(b_2\otimes j_2)(a_2\otimes \beta e_2)\ne 0,
		$$
		for some $  a_1, a_2 \in E^{(0)}, \alpha, \beta \in \{1, c\},$ $b_1\otimes j_1,b_2\otimes j_2, \in E^{(0)}\otimes  J^{(0)}\cup E^{(1)} \otimes  J^{(1)}.$  Then $e_2j_1e_1j_2e_2 \ne 0$ and,
		by Lemma \ref{lemmaA_9}, there exists a $G\times \mathbb{Z}_2$-graded subalgebra $\bar B$ of $B$  such that $Id^G(\mathcal{A}_{8}^{g_1,g_2}) \supseteq Id^G(E(\bar B))$, for some $g_1,g_2 \in G$.

		Next assume that a monomial of $f$ has an evaluation of the type
		$$
		(b_1 \otimes j_1)(1\otimes e_1)(b_2\otimes j_2) (a_1\otimes e_2)
		(b_3\otimes j_3)\ne 0,
		$$
		for some $b_1\otimes j_1,b_2\otimes j_2,b_3\otimes j_3\in E^{(0)}\otimes  J^{(0)}\cup E^{(1)} \otimes  J^{(1)}, a_1 \in E^{(0)}$.

		Let $j_1e_1j_2e_2j_3\ne 0$. By Lemma \ref{lemmaA_{10}}
		there exists a $G\times \mathbb{Z}_2$-graded subalgebra $\bar B$ of $B$  such that $Id^G(E(\bar B))\subseteq Id^G(\mathcal{A}_{9}^{g_1,g_2,g_3})$.  The same result holds when $j_1e_1j_2ce_2j_3\ne 0$.

		Now we consider the case when a monomial of $f$ has an evaluation of the type
		$$
		(b_1 \otimes j_1)(a_1\otimes e_2)(b_2\otimes j_2) (1\otimes e_1)
		(b_3\otimes j_3)\ne 0,
		$$
		for some $b_1\otimes j_1,b_2\otimes j_2,b_3\otimes j_3\in E^{(0)}\otimes  J^{(0)}\cup E^{(1)} \otimes  J^{(1)}$ and $a_1 \in E^{(0)}.$
		By Lemma \ref{lemmaA_{11}} there exists a $G \times \mathbb{Z}_2$-graded subalgebra $\bar B$ of $B$  such that $Id^G(E(\bar B))\subseteq Id^G(\mathcal{A}_{10}^{g_1,g_2,g_3})$.  The same result holds when
		$j_1ce_2j_2e_1j_3\ne 0$.

		Then, if $\bar m\ne 0$ is the evaluation in $\bar f$ of a monomial of $f$, we may assume that
		$(1\otimes e_i)\bar m (1\otimes e_i)=0$, $1\le i\le 2$, and so
		$(1 \otimes e_i) \bar f (1 \otimes e_i) =0$ and $(1 \otimes e_i) \bar f= \bar f (1 \otimes e_i) =0$, $1\le i\le 2$.
		If we denote $(1 \otimes e_i)=\bar e_i$, then  we may write, as in the previous case,
		$$
		\bar f = \bar e_1\bar f_1+ \bar f_2 \bar e_1+ \bar e_2 \bar f_3 + \bar f_4 \bar e_2.
		$$
		Since $\bar e_1\bar f=0$ we obtain that
		$\bar e_1\bar f_1  + \bar e_1\bar f_4 \bar e_2= 0$
		and so
		$\bar e_1\bar f_1 \bar e_2 + \bar e_1\bar f_4 \bar e_2= 0.$
		It follows that $\bar e_1\Bar{f}_1 = \bar e_1\Bar{f}_1\bar e_2.$
		
		Moreover, since $\bar f\bar e_2 =0$,  similarly we get
		$
		\bar e_1\bar f_1\bar e_2+ \bar f_4 \bar e_2= 0
		$
		and so
		$
		\bar e_1\bar f_1+\bar f_4 \bar e_2= 0.
		$
		As a consequence we obtain that
		$$
		\bar f =  \bar f_2 \bar e_1+ \bar e e_2 \bar f_3.
		$$
		Since
		$0= \bar e_2\bar f =  \bar e_2\bar f_2 \bar e_1+ \bar e_2 \bar f_3,$
		we get $\bar f =  \bar f_2 \bar e_1+ \bar e_2 \bar f_3= \bar f_2 \bar e_1 - \bar e_2\bar f_2 \bar e_1$ and so
		$0=\bar f \bar e_1= \bar f_2 \bar e_1 - \bar e_2\bar f_2 \bar e_1 = \bar f \neq 0$,  a contradiction.
		This completes Case 2.

		We remark that, if $D$ contains two copies of $F\oplus cF$, then it also contains one copy of $F$ and one copy of $F\oplus cF$.
		This says that Case 3 can be deduced from Case 2.
	\end{proof}
	
	In order to  characterize the varieties of $G$-graded algebras with proper central $G$-exponent equal to two  we need to recall the main result proved in \cite{GLP}.
	
	Let $E^b$ denote the Grassmann algebra with the canonical
	$\mathbb{Z}_2$-grading induced by $b\in G$ of order $2$,
	$D^{(1,g,h)}$ and
	$D_0^{(1,g,h,h')}$ the $G$-graded algebras $D=\{(a_{ij})\in UT_3(F) \, | \, a_{11}=a_{33}\}$ and
	$D_0=\{(a_{ij})\in UT_4(F) \, | \, a_{11}=a_{44}=0\}$ with elementary $G$-graded induced by $(1,g,h)$ and  $(1,g,h,h')$,  $g, h, h' \in G,$ respectively.

	\begin{Theorem}\cite[Theorem 3.1]{GLP} \label{Theorem G}
		Let $A$ be a $G$-graded  PI-algebra.
		If   $exp^{G,\delta}(A)\ge 2$, 	then
		$Id^G(A) \subseteq Id^G(\mathcal{A}_2^p)$ or  $Id^G(A) \subseteq Id^G(E)$ or
		$Id^G(A) \subseteq  Id^G(D^{(1,g,h)})$ or
		$Id^G(A) \subseteq  Id^G(D_0^{(1,g',h',l)})$,
		or  $Id^G(A) \subseteq Id^G(E^b)$, in case $|G|$ is even,   for some $a,b, g,h,g',h',l\in G$ and a prime $p$ such that $p\mid |G|$.
	\end{Theorem}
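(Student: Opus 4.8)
The plan is to follow the same line as the proof of Theorem~\ref{teorema1}, but with the threshold lowered from $3$ to $2$, so that the configurations one must extract are the five minimal generators appearing in the statement rather than the algebras $\mathcal{A}_6^{g_1,g_2,g_3},\dots,\mathcal{A}_{12}^{g_1,g_2}$. First I would reduce to the finite-dimensional model: by the representability theorem we may assume $A=E(B)$ with $B=B_1\oplus\cdots\oplus B_q+J$ in Wedderburn--Malcev form and every $B_i$ a simple $G\times\mathbb{Z}_2$-graded algebra. Since by \cite{LMR} the number $exp^{G,\delta}(E(B))$ equals the maximal dimension of a centrally admissible subalgebra of $B$, the assumption $exp^{G,\delta}(A)\ge 2$ yields a centrally admissible $D=B_{i_1}\oplus\cdots\oplus B_{i_k}$ with $\dim D\ge 2$, together with a proper central $G$-polynomial $f$ of $E(B)$ having a non-zero central evaluation $\bar f$ that meets each $E(B_{i_h})$. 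The task is then to decide which minimal sub-configuration of $D$ supports $\bar f$.

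If the dimension $\ge 2$ is already carried by a single simple component $B_i$, I would apply Theorem~\ref{BSZ} to write $B_i\cong F^{\alpha}H\otimes M_m(F)$ and search inside $H$ (and, when $m\ge 2$, inside a matrix block) for a minimal homogeneous subalgebra. An element of prime order $p$ dividing $|G|$, whether of the form $(g,0)$ or with a square landing in the $\mathbb{Z}_2$-trivial slot, produces a copy of $FC_p$ once $\alpha$ is trivialised on the corresponding cyclic subgroup -- for instance $F\cdot 1\oplus F(e_{12}+e_{21})$ inside a block with an involutive degree -- giving $Id^G(A)\subseteq Id^G(\mathcal{A}_2^p)$. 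An element of order two lying in the pure $\mathbb{Z}_2$-slot produces, through the Grassmann envelope, the algebra $E$, while an involution $(b,1)$ with $b\in G$ produces $E^b$ and forces $|G|$ even. The grading tuple and the induced $\mathbb{Z}_2$-degree of the chosen generator select which case occurs.

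If instead $\bar f$ genuinely links two one-dimensional components $B_i,B_j$ through the radical, the pattern of $\bar f$ fixes the model exactly as in Lemmas~\ref{lemmaA_7}--\ref{lemmaA_{12}}, but with two idempotents in place of three. A bridge that departs from and returns to the same (identified) idempotent, $e_1j_1e_2j_2e_1\ne 0$, calls for the homomorphism onto $\{(a_{ij})\in UT_3(F)\mid a_{11}=a_{33}\}$ and yields $Id^G(A)\subseteq Id^G(D^{(1,g,h)})$; two consecutive interior bridges, $j_1e_1j_2e_2j_3\ne 0$, call for the model $\{(a_{ij})\in UT_4(F)\mid a_{11}=a_{44}=0\}$ and yield $Id^G(A)\subseteq Id^G(D_0^{(1,g',h',l)})$. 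In each case I would build the explicit $G\times\mathbb{Z}_2$-graded homomorphism onto the upper-triangular model, pass to Grassmann envelopes, and identify the image with the listed algebra, precisely mimicking the computations carried out in those lemmas.

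The step I expect to be the main obstacle is the closing-up argument guaranteeing that at least one of the patterns above actually occurs. As at the end of Cases~1 and~2 of Theorem~\ref{teorema1}, one assumes that no admissible bridge is present and shows that every monomial evaluation of $f$ is annihilated on both sides by the relevant idempotents; centrality of $\bar f$ then collapses it to $0$, contradicting $\bar f\ne 0$. Executing this at the threshold $2$ demands a careful enumeration of the induced $\mathbb{Z}_2$-degrees of the radical elements $j_t$ -- this is what decides between $D^{(1,g,h)}$ and $D_0^{(1,g',h',l)}$, between $E$ and $E^b$, and which cocycle appears -- together with a verification, at each extraction, that the isolated subalgebra has its $T_G$-ideal contained in that of the target, i.e. that the target genuinely lies in $var^G(A)$.
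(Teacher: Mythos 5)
First, a point of reference: the paper does not prove this statement at all --- it is imported verbatim from \cite[Theorem 3.1]{GLP} and used as a black box, so there is no internal proof to measure your attempt against. Judged on its own terms, your outline correctly mirrors the architecture of the proof of Theorem \ref{teorema1}: reduction to $E(B)$ via representability and Wedderburn--Malcev, passage to a centrally admissible subalgebra $D$ of dimension at least $2$, the split according to whether that dimension sits inside one simple component or is spread over two, the extraction of $D^{(1,g,h)}$ from a round trip $e_1j_1e_2j_2e_1\ne 0$ and of $D_0^{(1,g',h',l)}$ from a pass-through $j_1e_1j_2e_2j_3\ne 0$, and the final ``no admissible bridge implies $\bar f=0$'' contradiction. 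The two-component branch (where both components may indeed be taken one-dimensional in degree $(1_G,0)$, since centrally admissibility passes to sub-collections) and the closing-up argument are sound in outline, and you are right to flag the $\mathbb{Z}_2$-degree bookkeeping as the delicate point there.

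The genuine gap is in the single-component branch when the simple component is $F^{\alpha}H\otimes M_m(F)$ with $m\ge 2$. Your recipe --- find an element of prime order in $H$, or take $F\cdot 1\oplus F(e_{12}+e_{21})$ ``inside a block with an involutive degree'' --- produces nothing when $H$ is trivial and the elementary grading on the matrix block is trivial, or more generally when $\deg e_{12}$ is not an involution, so that $e_{12}+e_{21}$ is not even homogeneous. In that situation none of $\mathcal{A}_2^p$, $E$, $E^b$ can be pulled out of the semisimple part, and what is actually required is a $T_G$-ideal containment of the form $Id^G(M_2(F)^{\underline g})\subseteq Id^G(D^{(1,g',h')})$, i.e.\ that $D$ with a suitable elementary grading lies in $var^G(M_2(F)^{\underline g})$. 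This is true but is a statement needing its own proof: already in the ungraded case it amounts to verifying that both $s_4$ and the Hall identity $[[x_1,x_2]^2,x_3]$ vanish on $D$ (using $J(D)^3=0$ and the centrality of $J(D)^2$ in $D$), and the graded version requires matching the elementary gradings. Without this step the argument fails on the simplest relevant example, $A=M_2(F)$ with trivial grading, which has $exp^{G,\delta}(A)=4$ yet admits none of your semisimple extractions; so as written the sketch does not close.
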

	
	As a consequence we have the following.
	
	\begin{Corollary}
		Let $\mathcal{V}$ be a variety of $G$-graded algebras. Then  $exp^{G,\delta}(\mathcal V)=2$ if and only if  $\mathcal{A}^{g,i}_1,    \mathcal{A}^p_2, \mathcal{A}_3, \mathcal{A}_4, \mathcal{A}_5^{i,j}, \\ \mathcal{A}_6^{g_1,g_2,g_3}, \mathcal{A}_7^{g_1,g_2,g_3, g_4}, \mathcal{A}_{8}^{g_1,g_2}, \mathcal{A}_{9}^{g_1,g_2,g_3}, \mathcal{A}_{10}^{g_1,g_2,g_3}, \mathcal{A}_{11}^{g_1,g_2}, \mathcal{A}_{12}^{g_1,g_2}\not \in \mathcal{V}$  and at least
		one algebra among $D^{(1,g,h)}$,  $D_0^{(1,g,h,h')}$,  $E$ with trivial grading, and $E^b$ if $|G|$ is even, belongs to $\mathcal{V}$.
	\end{Corollary}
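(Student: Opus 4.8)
The plan is to read off the Corollary by combining the two characterization theorems already at our disposal, Theorem \ref{teorema1} and Theorem \ref{Theorem G}, together with the standard dictionary between varieties and $T_G$-ideals: for $\mathcal V = var^G(A)$ one has $B \in \mathcal V$ if and only if $Id^G(A) \subseteq Id^G(B)$, and consequently $var^G(B) \subseteq \mathcal V$, whence $exp^{G,\delta}(B) \le exp^{G,\delta}(\mathcal V)$ whenever $B \in \mathcal V$. I would record this monotonicity at the outset, since it converts both theorems, which are phrased via ideal inclusions, into statements about membership in $\mathcal V$ and about the exponent.

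For the forward implication, assume $exp^{G,\delta}(\mathcal V) = 2$. Since $2 \not> 2$, Theorem \ref{teorema1} gives at once that none of $\mathcal{A}_1^{g,i}, \ldots, \mathcal{A}_{12}^{g_1,g_2}$ lies in $\mathcal V$, which is the first half of the required condition. Since also $2 \ge 2$, I would apply Theorem \ref{Theorem G} to an algebra $A$ generating $\mathcal V$, obtaining an inclusion $Id^G(A) \subseteq Id^G(C)$ with $C$ among $\mathcal{A}_2^p,\, E,\, D^{(1,g,h)},\, D_0^{(1,g',h',l)},\, E^b$, i.e. $C \in \mathcal V$. The only alternative to discard is $C = \mathcal{A}_2^p$: but $\mathcal{A}_2^p$ is itself one of the algebras of Theorem \ref{teorema1} and was just excluded (equivalently, $exp^{G,\delta}(\mathcal{A}_2^p) = p > 2$ would force $exp^{G,\delta}(\mathcal V) > 2$). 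Hence one of $E,\, D^{(1,g,h)},\, D_0^{(1,g',h',l)},\, E^b$ belongs to $\mathcal V$, completing this direction.

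For the reverse implication, assume none of $\mathcal{A}_1^{g,i}, \ldots, \mathcal{A}_{12}^{g_1,g_2}$ lies in $\mathcal V$ while at least one of $D^{(1,g,h)},\, D_0^{(1,g,h,h')},\, E,\, E^b$ does. The first hypothesis, via the contrapositive of Theorem \ref{teorema1}, yields the upper bound $exp^{G,\delta}(\mathcal V) \le 2$. For the matching lower bound I would invoke \cite{GLP}: each of $D^{(1,g,h)},\, D_0^{(1,g,h,h')},\, E,\, E^b$ generates a variety of almost polynomial $(G,\delta)$-growth and hence has proper central $G$-exponent equal to $2$; by the monotonicity recorded above, whichever one belongs to $\mathcal V$ forces $exp^{G,\delta}(\mathcal V) \ge 2$. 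The two bounds give $exp^{G,\delta}(\mathcal V) = 2$.

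The whole argument is bookkeeping once the two characterizations are granted; the single point that genuinely requires care is the reconciliation of the two lists, namely removing the alternative $\mathcal{A}_2^p$ produced by Theorem \ref{Theorem G} so that the surviving options match exactly the algebras $D^{(1,g,h)},\, D_0^{(1,g,h,h')},\, E,\, E^b$ appearing in the Corollary. The other ingredients I would cite rather than reprove: the monotonicity $B \in var^G(A) \Rightarrow exp^{G,\delta}(B) \le exp^{G,\delta}(A)$ from the preliminaries, and the value $2$ of the $(G,\delta)$-exponent for the four almost-polynomial-growth algebras from \cite{GLP}.
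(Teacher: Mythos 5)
Your argument is correct and follows essentially the same route as the paper: combine Theorem \ref{teorema1} (which characterizes $exp^{G,\delta}(\mathcal V)\le 2$ by exclusion of the twelve listed algebras) with Theorem \ref{Theorem G} (which characterizes $exp^{G,\delta}(\mathcal V)\ge 2$ by membership of one of $\mathcal{A}_2^p, D^{(1,g,h)}, D_0^{(1,g,h,h')}, E, E^b$), then intersect the two conditions. You are in fact slightly more explicit than the paper on two small points it leaves implicit --- discarding the $\mathcal{A}_2^p$ alternative because it already occurs among the excluded algebras, and supplying the converse of Theorem \ref{Theorem G} via the almost polynomial $(G,\delta)$-growth of $D^{(1,g,h)}, D_0^{(1,g,h,h')}, E, E^b$ together with monotonicity of the exponent --- but this is the same proof.
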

	\begin{proof}
		By Theorem \ref{Theorem G} $exp^{G,\delta} (\mathcal V)\geq 2$ if and only if $ \mathcal{A}^p_2=FC_p \in \mathcal V$, with $p\mid |G|$, or $D^{(1,g,h)} \in \mathcal V$ or $D_0^{(1,g,h,h')} \in \mathcal V$ or $E \in \mathcal V$ or $E^b \in \mathcal V$ in case $|G|$ is even,   for some $b, g,h,g',h',l\in G$. Moreover, by Theorem \ref{teorema1}, $exp^{G,\delta} (\mathcal V)\leq 2$ if and only if the $G$-graded algebras  $\mathcal{A}^{g}_1, \ldots, \mathcal{A}_{12}^{g_1,g_2}\not \in \mathcal{V}$, and the proof is completed.
	\end{proof}
	
	\medskip
	Let us recall the following.
	
	\begin{Definition}
		Let $\mathcal{V}$ be a variety of $G$-graded algebras. We say that $\mathcal{V}$ is minimal of proper central $G$-exponent $d$ if exp$^{G,\delta}(\mathcal{V})=d$ and for every proper subvariety $\mathcal{U}\subset \mathcal{V}$ we have that exp$^{G,\delta}(\mathcal{U})<d$.
	\end{Definition}
	
	\medskip
	
	Let   $var^G(A)$ be the variety of $G$-graded algebras generated by an algebra $A$.
	Then denote
	$$
	\mathcal{V}_1^g=var^G(\mathcal{A}^{g,i}_1), \
, \
	\mathcal{V}_2^p=var^G(\mathcal{A}^{p}_2), \
	\mathcal{V}_3=var^G(\mathcal{A}_3), \
	\mathcal{V}_4=var^G(\mathcal{A}_4), \
	\mathcal{V}_5^{i,j}=var^G(\mathcal{A}^{i,j}_5),
	$$
	$$
	\mathcal{V}_6^{g_1,g_2,g_3}=var^G( \mathcal{A}_6^{g_1,g_2,g_3}), \ \mathcal{V}_7^{g_1,g_2,g_3,g_4}=var^G( \mathcal{A}_7^{g_1,g_2,g_3,g_4}),\ \mathcal{V}_8^{g_1,g_2}=var^G( \mathcal{A}_8^{g_1,g_2}),
	$$
	$$
	\mathcal{V}_{9}^{g_1,g_2,g_3}=var^G( \mathcal{A}_{9}^{g_1,g_2,g_3}), \
	\mathcal{V}_{10}^{g_1,g_2,g_3}=var^G( \mathcal{A}_{10}^{g_1,g_2,g_3}), \ \mathcal{V}_{11}^{g_1,g_2}=var^G( \mathcal{A}_{11}^{g_1,g_2}), \
	\mathcal{V}_{12}^{g_1,g_2}=var^G( \mathcal{A}_{12}^{g_1,g_2}).
	$$
	
	\bigskip
	
	We say that two varieties $\mathcal V$ and $\mathcal W$ are not comparable if $\mathcal V\nsubseteq \mathcal W$ and
	$\mathcal W\nsubseteq  \mathcal V$. Then we have the following.
	
	\begin{Proposition} \label{differentvarieties}
		The varieties   $\mathcal{V}_1^{g,i},  \mathcal{V}_2^p, \mathcal{V}_3, \mathcal{V}_4, \mathcal{V}_5^{i,j},$ $\mathcal{V}_6^{g_1,g_2,g_3}, \mathcal{V}_7^{g_1,g_2,g_3,g_4}, \mathcal{V}_8^{g_1,g_2},$ $\mathcal{V}_{9}^{g_1,g_2,g_3}$ and $\mathcal{V}_{10}^{g_1,g_2,g_3}$ are not comparable. Moreover they do not contain the varieties $\mathcal{V}_{11}^{g_1,g_2}, \mathcal{V}_{12}^{g_1,g_2}$.
	\end{Proposition}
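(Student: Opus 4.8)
The plan is to reduce pairwise incomparability to the production, for each ordered pair of generators, of a single multilinear $G$-polynomial lying in the $T_G$-ideal of one algebra but not of the other. Indeed, for $\mathcal V=var^G(A)$ and $\mathcal W=var^G(C)$ one has $\mathcal V\subseteq\mathcal W$ if and only if $Id^G(C)\subseteq Id^G(A)$, so $\mathcal V$ and $\mathcal W$ are incomparable exactly when there exist $f\in Id^G(C)\setminus Id^G(A)$ and $h\in Id^G(A)\setminus Id^G(C)$. I would assemble such witnesses from three sources: (i) the ordinary $G$-exponent, which is monotone under inclusion, since $\mathcal V\subseteq\mathcal W$ forces $c_n^G(\mathcal V)\le c_n^G(\mathcal W)$ and hence $exp^G(\mathcal V)\le exp^G(\mathcal W)$; (ii) the grading support, through the identities $x_1^h$ with $h$ outside the support of one of the algebras; and (iii) the explicit $T_G$-ideal generators already computed in Lemmas \ref{TidealB} and \ref{TidealC} and in Proposition \ref{A10,A11}, together with short commutator and anticommutator identities tested on the matrix-unit bases.

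First I would stratify the list by $exp^G$. By the remark preceding Lemma \ref{exponent Ai} one has $exp^{G,\delta}=4$ for $\mathcal A_1^{g,i},\mathcal A_3,\mathcal A_4,\mathcal A_5^{i,j}$ and $exp^{G,\delta}(\mathcal A_2^p)=p$; being themselves simple or commutative of the respective dimensions, these also satisfy $exp^G=4$, respectively $exp^G=p$, and by Lemma \ref{exponent Ai} one has $exp^G=3$ for $\mathcal A_6,\dots,\mathcal A_{12}$. Monotonicity of $exp^G$ then rules out any inclusion of a higher-exponent variety into a lower-exponent one; in particular none of $\mathcal V_1,\mathcal V_3,\mathcal V_4,\mathcal V_5$ lies in any of $\mathcal V_6,\dots,\mathcal V_{10}$, and when $p\ge5$ the variety $\mathcal V_2^p$ is contained in none of the others. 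What remains is: the reverse inclusions in each exponent-distinct pair, all comparisons inside the exponent-$4$ block, all comparisons inside the exponent-$3$ block, and the boundary case $p=3$.

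The remaining directions I would settle with structural identities. Each of $\mathcal A_6,\dots,\mathcal A_{12}$ is an envelope of a subalgebra of upper block-triangular matrices, so its radical is nilpotent and it satisfies a product-of-commutators identity $[x_1,y_1]\cdots[x_m,y_m]\equiv0$; the noncommutative simple generators $\mathcal A_1^{g,i},\mathcal A_5^{i,j}$ satisfy no such identity, while conversely a polynomial identity of the simple algebra (a standard identity or the Hall identity $[[x_1,x_2]^2,x_3]$), which an explicit matrix-unit evaluation shows fails on $\mathcal A_j$, gives $\mathcal V_j\not\subseteq\mathcal V_1,\mathcal V_5$. Against the commutative generators $\mathcal A_2^p$ and $\mathcal A_3$ the witness in one direction is a commutator $[x_1^a,x_2^b]$ (an identity of the commutative algebra violated by the radical-carrying one), and in the other direction an out-of-support monomial $x_1^h$ or a short monomial such as $x_1^gx_2^g$ that vanishes in the triangular algebra but not in the semisimple one; this also disposes of $p=3$. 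Inside the exponent-$4$ block I combine the supports with \emph{both} the commutator and the anticommutator: for instance $\mathcal A_3=FC_4$ is commutative and satisfies $[x_1^g,x_2^g]$, whereas $\mathcal A_4=E(FC_{4,1})$ has an odd homogeneous component in degree $g$ and instead satisfies the super-identity $x_1^gx_2^g+x_2^gx_1^g$, each polynomial failing on the other algebra; $\mathcal A_5^{i,j}$ (two commuting involutions) and $\mathcal A_1^{g,i}$ (simple noncommutative) are then split off by their distinct homogeneous supports and by whether $[x_1^1,x_2^1]$ holds. For the exponent-$3$ block I exploit the explicit generators: the configurations realized by Lemmas \ref{lemmaA_7}--\ref{lemmaA_{12}} are genuinely different (a $3$-cycle $e_1Je_2Je_3Je_1$ for $\mathcal A_6$, a length-$4$ path $Je_1Je_2Je_3J$ for $\mathcal A_7$, and two-idempotent patterns with one idempotent of type $F\oplus cF$ for $\mathcal A_8,\mathcal A_{11},\mathcal A_{12}$), while $\mathcal A_9$ and $\mathcal A_{10}$ are separated through Proposition \ref{A10,A11}, since the factorizations $Id^G(\mathcal A_9)=Id^G(B_1)Id^G(B_2)$ and $Id^G(\mathcal A_{10})=Id^G(C_1)Id^G(C_2)$ attach the commuting $2\times2$ block on opposite sides of the idempotent chain and hence carry different one-sided annihilator identities. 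The non-containment of $\mathcal V_{11},\mathcal V_{12}$ follows the same pattern: the configuration $e_1Je_2Je_1$ with $e_2\in F\oplus cF$ defining $\mathcal A_{11},\mathcal A_{12}$ is not realized inside any $\mathcal A_i$ with $i\le10$, so one exhibits a $G$-identity of $\mathcal A_i$ violated on $\mathcal A_{11}$ (respectively $\mathcal A_{12}$), the two being told apart by the placement of the radical with respect to the induced $\mathbb Z_2$-grading.

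The main obstacle is the exponent-$3$ stratum. There the monotone invariant gives nothing and the supports may coincide, so one must instead produce, for each of the many ordered pairs, an explicit short multilinear $G$-polynomial and verify non-vanishing by an evaluation on the matrix-unit generators. The delicate cases are $(\mathcal A_9,\mathcal A_{10})$ and $(\mathcal A_{11},\mathcal A_{12})$, where the algebras differ only in the side on which the commuting block attaches and in the $\mathbb Z_2$-placement of the radical, so that the distinguishing polynomials must be super-identities (anticommutators) read off from Lemmas \ref{TidealB}, \ref{TidealC} and Proposition \ref{A10,A11}; the final technical point is to keep the chosen witnesses valid uniformly in the free parameters $g_1,\dots,g_4$.
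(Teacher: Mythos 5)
Your overall scheme---two-sided witness identities, monotonicity of the ordinary $G$-exponent, and degree-support monomials---is the right one and, for the exponent-$4$ generators and the commutative generators $\mathcal{A}_2^p$, $\mathcal{A}_3$, $\mathcal{A}_4$, your witnesses (commutators versus anticommutators, support monomials, nilpotency of the non-identity components of the triangular algebras) coincide in substance with what the paper does. However, there is a genuine gap exactly where you yourself locate ``the main obstacle'': the pairwise incomparability of $\mathcal{V}_1^{g,i},\mathcal{V}_6,\dots,\mathcal{V}_{10}$ and the non-containment of $\mathcal{V}_{11},\mathcal{V}_{12}$ in them are never actually established. The paper does not produce these witnesses by hand; it reduces the whole block in one stroke to \cite[Proposition~1]{BV}, the ungraded analogue, using that an ordinary identity is a special kind of $G$-graded identity, so that incomparability of the underlying ordinary varieties transfers verbatim. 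Your substitute---``the configurations realized by Lemmas \ref{lemmaA_7}--\ref{lemmaA_{12}} are genuinely different'' and the appeal to ``different one-sided annihilator identities'' for $\mathcal{A}_9$ versus $\mathcal{A}_{10}$---is an assertion, not an argument: Proposition \ref{A10,A11} gives factorizations of the two $T_G$-ideals but does not by itself show that neither product is contained in the other. Moreover the concrete witnesses you do name are doubtful: the Hall identity $[[x_1,x_2]^2,x_3]$ is an identity of $M_2(F)$ but you also need an identity of $M_{1,1}(E)$ (the case $\mathcal{A}_1^{g,1}$) and of $E(F^\alpha H_{i,j})$ failing on each of $\mathcal{A}_6,\dots,\mathcal{A}_{10}$, and these algebras do not satisfy the Hall identity; nor do the triangular generators satisfy a product-of-commutators identity in the naive form you state, since their radicals contain odd Grassmann components.

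A second omission: the proposition also asserts incomparability \emph{within} each parametrized family, e.g.\ $\mathcal{V}_6^{g_1,g_2,g_3}$ versus $\mathcal{V}_6^{h_1,h_2,h_3}$ for distinct tuples, and $\mathcal{V}_5^{i,j}$ versus $\mathcal{V}_5^{i',j'}$. The paper settles these with the monomials $x_1^{g_1}x_2^{g_2}x_3^{g_3}$ (nonzero on $e_{12}e_{23}e_{34}$ in one grading, an identity in the other) and with cocycle/anticommutator identities for the $\mathcal{A}_5^{i,j}$. Your proposal never addresses same-type, different-parameter pairs; your closing remark about keeping the witnesses ``valid uniformly in the free parameters'' suggests you treated each family as a single variety, which is not what the statement requires. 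To repair the proof you should either import \cite[Proposition~1]{BV} as the paper does, or genuinely exhibit and verify the separating polynomials for each ordered pair in the exponent-$3$ block and for each pair of distinct parameter tuples within a family.
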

	\begin{proof}

   We start by comparing among themselves varieties of the same type. Clearly the varieties of the type $\mathcal{V}_1^{g,i}$  are not comparable among themselves.
 Moreover, since the variety $\mathcal{V}_2^p$ has almost polynomial $\delta$-growth (see \cite{GLP}), we immediately get to   $\mathcal{V}_2^p$ and $\mathcal{V}_2^q$ are not comparable, for $p \ne q$.
		
Let us now consider the varieties of type $\mathcal{V}_5^{i,j}$. We start by comparing the graded identities of $\mathcal{V}_5^{i,j}$ and $\mathcal{V}_5^{i',j'}$ when $(i',j')\ne (i,j)$ and $(i',j')\ne (0,0)$.
	 Let  $\tilde{g}\in G$ be an element of order two such that $(\mathcal{A}^{i,j}_5)^{(\tilde{g})}=E^{(0)}\otimes F^\alpha(\tilde{g},0)$ and $(\mathcal{A}^{i',j'}_5)^{(\tilde{g})}=E^{(1)}\otimes F^\alpha(\tilde{g},1)$. Then
		$x_1^{\tilde{g}}\circ x_2^{\tilde{g}}=x_1^{\tilde{g}} x_2^{\tilde{g}}+x_2^{\tilde{g}}x_1^{\tilde{g}}$ is an identity of $\mathcal{A}^{i',j'}_5$ but not of $\mathcal{A}^{i,j}_6$.
        Now, let $(i',j')=(0,0)$ and let $\tilde{g}, \tilde{h} \in G$ be elements of order two such that $(\mathcal{A}^{i,j}_5)^{(\tilde{g})}=E^{(1)}\otimes F^\alpha(\tilde{g},1)$ and $(\mathcal{A}^{i,j}_5)^{(\tilde{h} )}=E^{(1)}\otimes F^\alpha(\tilde{h} ,1)$. Then $\alpha(\tilde{g},\tilde{h} )x_1^{\tilde{h} }x_2^{\tilde{g}}-\alpha(\tilde{h} ,\tilde{g})x_1^{\tilde{g}}x_2^{\tilde{h} }$  is an identity of $\mathcal{A}^{0,0}_5$ but not of $\mathcal{A}^{i,j}_5$.  Thus the varieties of type $\mathcal{V}_5^{i,j}$ are not comparable among themselves.

		Let's go on by comparing the graded identities of $\mathcal{A}_6^{g_1,g_2,g_3}$ and $\mathcal{A}_6^{h_1,h_2,h_3}$ when $(g_1,g_2,g_3)\ne (h_1,h_2,h_3)$.
		Since $e_{12}e_{23}e_{34}\ne 0$ we get that $x_1^{g_1}x_2^{g_2}x_3^{g_3}$ is an identity of  $\mathcal{A}_6^{h_1,h_2,h_3}$ but  not of  $\mathcal{A}_6^{g_1,g_2,g_3}$. Similarly $x_1^{h_1}x_2^{h_2}x_2^{h_3}$ is an identity of  $\mathcal{A}_6^{g_1,g_2,g_3}$ but  not of  $\mathcal{A}_6^{h_1,h_2,h_3}$.
		Then $Id^G(\mathcal{A}_6^{g_1,g_2,g_3})\not\subseteq Id^G(\mathcal{A}_6^{h_1,h_2,h_3})$
		and $Id^G(\mathcal{A}_6^{h_1,h_2,h_3})\not\subseteq Id^G(\mathcal{A}_6^{g_1,g_2,g_3})$.
		Thus $\mathcal{V}_6^{g_1,g_2,g_3}$ and $\mathcal{V}_6^{h_1,h_2,h_3}$ are not comparable.
		
		We can apply a similar consideration  to the $G$-graded algebras of the type $\mathcal{A}_7^{g_1,g_2,g_3,g_4}$ by considering the product $e_{12}e_{23}e_{34}e_{45}\ne 0$ and we get that  $\mathcal{V}_7^{g_1,g_2,g_3,g_4}$ and $\mathcal{V}_7^{h_1,h_2,h_3,h_4}$ are not comparable.
		
		Now, we consider the $G$-graded algebras of type $\mathcal{A}_8^{g_1,g_2}$. In this case $\alpha \beta e_{12}e_{23}\ne 0$, for any $\alpha, \beta \in E$, then $x_1^{g_1}x_2^{g_2}$
		is not an identity of $\mathcal{A}_8^{g_1,g_2}$ but  is an identity of $\mathcal{A}_8^{h_1,h_2}$ when $(g_1,g_2)\ne (h_1,h_2)$ and $Id^G(\mathcal{A}_8^{h_1,h_2})\not\subseteq Id^G(\mathcal{A}_8^{g_1,g_2})$. Similarly
		we prove that $Id^G(\mathcal{A}_8^{g_1,g_2})\not\subseteq Id^G(\mathcal{A}_8^{h_1,h_2})$ and so $\mathcal{V}_8^{g_1,g_2}$ and $\mathcal{V}_8^{h_1,h_2}$ are not comparable.
		
		Now, we compare the graded identities of $\mathcal{A}_{9}^{g_1,g_2,g_3}$ and $\mathcal{A}_{9}^{h_1,h_2,h_3}$ when $(g_1,g_2,g_3)\ne (h_1,h_2,h_3)$. We notice that
		$e_{12}e_{23}e_{35}\ne 0$ then $x_1^{g_1}x_2^{g_2}x_3^{g_3} \notin Id^G(\mathcal{A}_{9}^{g_1,g_2,g_3})$ but $x_1^{g_1}x_2^{g_2}x_3^{g_3} \in Id^G(\mathcal{A}_{9}^{h_1,h_2,h_3})$. Also $x_1^{h_1}x_2^{h_2}x_3^{h_3} \notin Id^G(\mathcal{A}_{9}^{h_1,h_2,h_3})$ but $x_1^{h_1}x_2^{h_2}x_3^{h_3} \in Id^G(\mathcal{A}_{9}^{g_1,g_2,g_3})$. Thus $\mathcal{V}_{9}^{g_1,g_2,g_3}$ and $\mathcal{V}_{9}^{h_1,h_2,h_3}$ are not comparable.
		
		By using similar arguments we obtain that $\mathcal{V}_{10}^{g_1,g_2,g_3}$ and $\mathcal{V}_{10}^{h_1,h_2,h_3}$ are not comparable.

		Now, we have to compare varieties of different types. We observe that
		if we regard the ordinary identities as $G$-polynomial identities, then by \cite[Proposition 1]{BV} we get that
		$\mathcal{V}_1^{g,i},$ $\mathcal{V}_6^{g_1,g_2,g_3}, \mathcal{V}_7^{g_1,g_2,g_3,g_4}, \mathcal{V}_8^{g_1,g_2},$ $\mathcal{V}_9^{g_1,g_2,g_3}$ and $\mathcal{V}_{10}^{g_1,g_2,g_3}$ are not comparable varieties.
		
		Hence we only need to compare the varieties $\mathcal{V}_2^p, \mathcal{V}_3, \mathcal{V}_4$ and $ \mathcal{V}_5^{i,j}$   with
		other types.
		
		Let $\mathcal V$ be one of the above varieties. Since $exp^{G,\delta}(\mathcal{V})=3,4$ and $\mathcal{V}_2^p$ has almost polynomial $\delta$-growth (see \cite{GLP}), for $p \ne 3$, the varieties $\mathcal{V}_2^p$ and $\mathcal V$ are not comparable.
		
		Now, if $p=3,$ we shall consider the $C_3$-graded algebra $\mathcal{A}_2^3$ where $C_3= \langle h \rangle,$ $h \in G.$ We observe that $\mathcal{A}_2^3 \notin \mathcal{V}_1^{g,i}$; in fact, if $\mathcal{A}_1^{g,i}$ has trivial grading, then $\mathcal{A}_2^3$  has more
		non-zero homogeneous components and we are done. Otherwise we have to
		consider only the cases in which $g = h$ or $g = h^2$. In both cases it is clear
		that $x_1^gx_2^g \equiv 0$ in $\mathcal{A}_1^{g,i}$  but not in $\mathcal{A}_2^3$.
		Since $exp^{G,\delta}(\mathcal{V}_1^{g,i})=4$ and $exp^{G,\delta}(\mathcal{V}_2^3)=3$ we get that the varieties $\mathcal{V}_2^3$ and $\mathcal{V}_1^{g,i}$ are not comparable.
		
		It should be notice  that if two algebras are graded by different nonisomorphic groups then there is nothing to prove. This is the case of the algebras $\mathcal{A}_2^3$
		and $B$, where $B \in \{ \mathcal{A}_3, \mathcal{A}_4, \mathcal{A}_5^{i,j}\}$.
		Hence we only need to compare the variety $\mathcal{V}_2^3$ with the
		variety  $\mathcal{V}=var^G(\mathcal{A})$,  where $\mathcal{V} \in \{ \mathcal{V}_6^{g_1,g_2,g_3}, \mathcal{V}_7^{g_1,g_2,g_3, g_4},  \mathcal{V}_8^{g_1,g_2}, \mathcal{V}_{9}^{g_1,g_2,g_3},\mathcal{V}_{10}^{g_1,g_2,g_3} \}$. This is achieved by noticing that  $\mathcal{A}_2^3$ is commutative and the
		components of $\mathcal{A}$  of  homogeneous degree different from one are nilpotent of index at most three.
		
		Since $\mathcal A_3, \mathcal A_4, \mathcal{A}_5^{i,j}$ have  more
		non-zero homogeneous components than  $\mathcal{A}_1^{g,i}$ it follows that $\mathcal{A}_3, \mathcal{A}_4, \mathcal{A}_5^{i,j}\not \in \mathcal{V}_1^{g,i}.$

        Now, we shall consider the variety $\mathcal{V}_3$. We start by comparing the $G$-polynomial identities of $\mathcal{A}_3$ and of $\mathcal{A}_1^{g,0}$.  If $g=1_G$ then $[x_1^1,x_2^1]$ is an identity of $\mathcal{A}_3$ but is not an identity of $\mathcal{A}_1^{g,0}$.
		If $g=h^2$ with $o(h) = 4$, then $[x_1^g,x_2^g]\in Id^G(\mathcal{A}_3)$ and $[x_1^g,x_2^g]\notin Id^G(\mathcal{A}_1^{g,0})$. If $g=h$ or $g=h^3$ with $o(h) = 4$, then $[x_1^1,x_2^g]\in Id^G(\mathcal{A}_3)$ and $[x_1^1,x_2^g]\notin Id^G(\mathcal{A}_1^{g,0})$.
		Thus the varieties $\mathcal{V}_3$ and $\mathcal{V}_1^{g,0}$ are not comparable.
		Similarly it can be proved that $\mathcal{V}_3$ and $\mathcal{V}_1^{g,1}$ are not comparable.

        The calculations for the variety $\mathcal{V}_4$  are the same as those made for the variety $\mathcal{V}_3$. The only difference is to consider the $G$-graded polynomial $x_1^1 \circ x_2^g$ instead of the polynomial  $[x_1^1,x_2^g]$ in the comparison with the variety $\mathcal{V}_1^{g,i}$ with $g=h$ or $g=h^3$ and $o(h) = 4$.

        Let's now consider the variety  $\mathcal{V}_5^{i,j}$; we have that
		  $\mathcal{A}_1^{g,i} \notin \mathcal{V}_5^{i,j}.$ In fact if $g={h_1}$ or $g={h_2}$ or $g={h_1}{h_2}$
		then $[x_1^1,x_2^g]$ is an identity of $\mathcal{A}_5^{i,j}$ but not of $\mathcal{A}_1^{g,i},$  where $h_1, h_2 \in G$ are distinct elements order 2. Otherwise, we have to consider $[x_1^1,x_2^1]$. Thus $\mathcal{V}_5^{i,j}$ is not comparable with the varieties $\mathcal{V}_1^{g,i}.$

		It is clearly that $\mathcal{V}_3$ and $\mathcal{V}_4$ are not comparable, in fact $[x_1^g,x_2^g]\in Id(\mathcal{A}_3)$ but   $[x_1^g,x_2^g]\notin Id(\mathcal{A}_4)$ and  $x_1^g\circ x_2^g=x_1^gx_2^g+x_2^gx_1^g \in Id(\mathcal{A}_4)$
		but $x_1^g\circ x_2^g \notin Id(\mathcal{A}_3)$.
		Moreover $\mathcal{V}_3$ and $\mathcal{V}_5^{i,j}$ are not comparable because they are graded by different non isomorphic groups.
		If we take a variety of $G$-graded algebras  $\mathcal{V}=var^G(\mathcal A)$, where $\mathcal{V} \in \{ \mathcal{V}_6^{g_1,g_2,g_3}, \mathcal{V}_7^{g_1,g_2,g_3, g_4}, \mathcal{V}_8^{g_1,g_2}, \mathcal{V}_{9}^{g_1,g_2,g_3},\mathcal{V}_{10}^{g_1,g_2,g_3} \}$, then $\mathcal{V}_3$ and $\mathcal{V}$ are not comparable since $\mathcal{A}_3$ is commutative and the
		components of $\mathcal A$  of  homogeneous degree different from one are nilpotent of index at most three.

		  Similarly considerations hold for the variety $\mathcal{V}_4.$
		
		Finally, let's consider the variety  $\mathcal{V}_5^{i,j}$  and let
		 $\mathcal{V}=var^G(\mathcal{A})$,  where $\mathcal{V} \in \{ \mathcal{V}_6^{g_1,g_2,g_3}, \mathcal{V}_7^{g_1,g_2,g_3, g_4}, \mathcal{V}_8^{g_1,g_2}, \mathcal{V}_{9}^{g_1,g_2,g_3},\mathcal{V}_{10}^{g_1,g_2,g_3} \}$. Then, as for the variety $\mathcal{V}_3$, by remarking that $\mathcal{A}_5^{i,j}$ is commutative and the
		components of $\mathcal{A}$  of  homogeneous degree different from one are nilpotent of index at most three,  we obtain that $\mathcal{V}_5^{i,j}$ and $\mathcal{V}$ are not comparable.
		
		The first part of the proposition is  proved.
		
		Now  consider the variety $\mathcal{V}_{11}^{g_1,g_2}$.
		From \cite[Proposition 1]{BV}, by regarding the ordinary identities as $G$-polynomial identities,  we get that
		$\mathcal{V}_{11}^{g_1,g_2} \not\subseteq \mathcal{V}$ where $\mathcal{V} \in \{ \mathcal{V}_1^{g,i}, \mathcal{V}_6^{g_1,g_2,g_3}, \mathcal{V}_7^{g_1,g_2,g_3, g_4}, \mathcal{V}_8^{g_1,g_2}, \mathcal{V}_9^{g_1,g_2,g_3},\mathcal{V}_{10}^{g_1,g_2,g_3} \}$.
		
		Moreover $\mathcal{V}_{11}^{g_1,g_2} \not\subseteq \mathcal{V}_2^p$; in fact it is sufficient to remark that the $G$-graded algebra $\mathcal{A}_2^p$ is abelian and the polynomial $[x_1^{1},x_2^{g_1}]$ is not an identity of $\mathcal{A}_{11}^{g_1,g_2}$.
		By the same argument we get that $\mathcal{V}_{11}^{g_1,g_2} \not\subseteq \mathcal{V}_3$.
		Also we have that
		$\mathcal{V}_{11}^{g_1,g_2} \not\subseteq \mathcal{V}_4$, in fact it is easy to see that  $x_1^{1} \circ x_2^{g_1}\in Id^G(\mathcal{A}_4)$  and  $x_1^{1} \circ x_2^{g_1} \notin Id^G(\mathcal{A}_{11}^{g_1,g_2})$.
		Finally, $\mathcal{V}_{11}^{g_1,g_2} \not\subseteq \mathcal{A}_5^{i,j}$ since
		$[x_1^{1},x_2^{g_1}]\in Id^G(\mathcal{A}_5^{i,j})$ and  $[x_1^{1},x_2^{g_1}]\notin Id^G(\mathcal{A}_{11}^{g_1,g_2})$.
		
		The proof is now complete.
	\end{proof}
	
	\medskip
	
	As a consequence we get the following.
	
	\medskip
	
	\begin{Corollary} The variety  $\mathcal{V}_2^p$, with $p\mid |G|$, is minimal  of proper central $G$-exponent equal to $p.$ The varieties  $\mathcal{V}_1^{g,i}$, $\mathcal{V}_3$, $\mathcal{V}_4$ and $\mathcal{V}_5^{i,j}$ are minimal  of proper central $G$-exponent equal to $4.$
		The varieties $\mathcal{V}_6^{g_1,g_2,g_3}, \mathcal{V}_7^{g_1,g_2,g_3, g_4},  \mathcal{V}_8^{g_1,g_2}, \mathcal{V}_{9}^{g_1,g_2,g_3}$ and $\mathcal{V}_{10}^{g_1,g_2,g_3}$  are minimal of proper central $G$-exponent equal to $3.$
	\end{Corollary}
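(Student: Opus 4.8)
The plan is to read the exponent values straight off the results already established and then to obtain minimality by a single uniform argument that combines Theorem \ref{teorema1} with the non-comparability of Proposition \ref{differentvarieties}. For the exponents there is essentially nothing to do: since each $\mathcal{V}_j = var^G(\mathcal{A}_j)$ we have $exp^{G,\delta}(\mathcal{V}_j) = exp^{G,\delta}(\mathcal{A}_j)$, and the remark preceding Lemma \ref{exponent Ai} gives $exp^{G,\delta}(\mathcal{A}_2^p) = p$ together with $exp^{G,\delta}(\mathcal{A}_1^{g,i}) = exp^{G,\delta}(\mathcal{A}_3) = exp^{G,\delta}(\mathcal{A}_4) = exp^{G,\delta}(\mathcal{A}_5^{i,j}) = 4$, while Lemma \ref{exponent Ai} yields $exp^{G,\delta}(\mathcal{A}_6^{g_1,g_2,g_3}) = \cdots = exp^{G,\delta}(\mathcal{A}_{10}^{g_1,g_2,g_3}) = 3$. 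Thus each variety in the statement carries the asserted exponent, and in every case this exponent is at least $3$, hence strictly greater than $2$.

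For minimality I would argue uniformly. Let $\mathcal{V}$ be any one of the varieties listed, so that $\mathcal{V}$ is one of the mutually non-comparable varieties $\mathcal{V}_1^{g,i}, \mathcal{V}_2^p, \mathcal{V}_3, \mathcal{V}_4, \mathcal{V}_5^{i,j}, \mathcal{V}_6^{g_1,g_2,g_3}, \ldots, \mathcal{V}_{10}^{g_1,g_2,g_3}$ of Proposition \ref{differentvarieties}. Take a subvariety $\mathcal{U} \subseteq \mathcal{V}$ and suppose $exp^{G,\delta}(\mathcal{U}) > 2$. By Theorem \ref{teorema1} at least one of $\mathcal{A}_1^{g,i}, \ldots, \mathcal{A}_{12}^{g_1,g_2}$ lies in $\mathcal{U}$, and since $\mathcal{U} \subseteq \mathcal{V}$ this algebra $\mathcal{A}_k$ lies in $\mathcal{V}$, i.e.\ $\mathcal{V}_k = var^G(\mathcal{A}_k) \subseteq \mathcal{V}$. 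Proposition \ref{differentvarieties} now forbids two things at once: no two distinct varieties among $\mathcal{V}_1^{g,i}, \ldots, \mathcal{V}_{10}^{g_1,g_2,g_3}$ are comparable, and none of them contains $\mathcal{V}_{11}^{g_1,g_2}$ or $\mathcal{V}_{12}^{g_1,g_2}$. The latter excludes $k \in \{11,12\}$, and the former forces $\mathcal{V}_k = \mathcal{V}$. Hence $\mathcal{A}_k \in \mathcal{U}$ gives $\mathcal{V} = \mathcal{V}_k \subseteq \mathcal{U} \subseteq \mathcal{V}$, so $\mathcal{U} = \mathcal{V}$.

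Taking the contrapositive, every proper subvariety $\mathcal{U} \subsetneq \mathcal{V}$ satisfies $exp^{G,\delta}(\mathcal{U}) \le 2$, which is strictly less than the exponent of $\mathcal{V}$ (namely $p$, $4$, or $3$, all $\ge 3$). This is precisely the minimality required by the definition, and it disposes of all the listed varieties at once. For $\mathcal{V}_2^p$ one may note in addition that the stronger bound $exp^{G,\delta}(\mathcal{U}) \le 1$ on proper subvarieties is already known, since $\mathcal{V}_2^p$ has almost polynomial $(G,\delta)$-growth by \cite{GLP}. The only delicate point is that the algebra furnished by Theorem \ref{teorema1} could a priori be one of $\mathcal{A}_{11}^{g_1,g_2}, \mathcal{A}_{12}^{g_1,g_2}$; this is exactly the eventuality that the final clause of Proposition \ref{differentvarieties} is designed to rule out, so no separate analysis of these two algebras is needed and the argument goes through verbatim for each variety in the statement.
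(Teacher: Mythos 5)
Your proposal is correct and follows essentially the same route as the paper: the exponents are read off from the remark preceding Lemma \ref{exponent Ai} and from the lemma itself, and minimality is obtained by combining Theorem \ref{teorema1} with the non-comparability statement of Proposition \ref{differentvarieties} (including its final clause excluding $\mathcal{V}_{11}^{g_1,g_2}$ and $\mathcal{V}_{12}^{g_1,g_2}$). Your write-up merely makes explicit the contradiction that the paper leaves implicit, so no further comment is needed.
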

	\begin{proof} By Lemma \ref{exponent Ai}, $exp^{G,\delta}(\mathcal{V}_2^p)=p$,
		$exp^{G,\delta}(\mathcal{V}_1^{g,i})=$  $exp^{G,\delta}(\mathcal{V}_3)=$ $exp^{G,\delta}(\mathcal{V}_4)=$ $exp^{G,\delta}(\mathcal{V}_5^{i,j})=4$ and   $exp^{G,\delta}(\mathcal{V}_6^{g_1,g_2,g_3})=$ $exp^{G,\delta}(\mathcal{V}_7^{g_1,g_2,g_3, g_4})=$ $exp^{G,\delta}(\mathcal{V}_8^{g_1,g_2})=$ $exp^{G,\delta}(\mathcal{V}_{9}^{g_1,g_2,g_3})=$ $exp^{G,\delta}(\mathcal{V}_{10}^{g_1,g_2,g_3})=3$.
		
		Now,
		let $\mathcal{V} \in  \{ \mathcal{V}_1^{g,i},  \mathcal{V}_2^p, \mathcal{V}_3, \mathcal{V}_4, \mathcal{V}_5^{i,j}, \mathcal{V}_6^{g_1,g_2,g_3}, \mathcal{V}_7^{g_1,g_2,g_3, g_4}, \mathcal{V}_8^{g_1,g_2}, \mathcal{V}_{9}^{g_1,g_2,g_3},\mathcal{V}_{10}^{g_1,g_2,g_3} \}$ and let $\mathcal{U}$ be a proper subvariety of $\mathcal{V}$.
		Suppose  that $exp^{G,\delta}(\mathcal{U})> 2$.
		By Theorem \ref{teorema1} and  Proposition \ref{differentvarieties},  we obtain  a contradiction.
	\end{proof}
	
	\bigskip


\begin{thebibliography}{99}
		
		
		\bibitem{AG} E. Aljadeff, A. Giambruno, {\em Multialternating graded polynomials and growth of polynomial identities}, Proc. Am. Math. Soc. {\bf 141} (2013), 3055–3065.
		
		\bibitem{AKB} E. Aljadeff, A. Kenal–Belov, {\em Representability and Specht problem for G-graded algebras}, Adv.
		Math. {\bf 225}  (2010), 2391–2428.
		
		
		\bibitem{AGL} E. Aljadeff, A. Giambruno, D. La Mattina, {\em Graded polynomial identities and exponential growth}, J. Reine Angew. Math. {\bf 650} (2011), 83-100.
		
		\bibitem{BSZ} Y. A. Bahturin, S. K. Sehgal, M. V. Zaicev, {\em Finite-dimensional simple graded algebras}, Sb. Math. {\bf 199} (2008), no. 7, 965–983.
		
		\bibitem{BV} F. S. Benanti, A. Valenti, {\em A Characterization of varieties of algebras of proper central
			exponent equal to two}, arXiv:2412.17689 [math.RA].
		
		\bibitem{CM} M. Cohen, S. Montgomery, {\em  Group-graded rings, smash product and group actions},
		Trans. Amer. Math. Soc. {\bf 282} (1984), 237–258.
		
		\bibitem{CR} C. W. Curtis, I. Reiner, Representation Theory of Finite Groups and Associative Algebras, Wiley Classics Library, John Wiley \& Sons, New York, 1988.
		
		
		\bibitem{DVLS} O.M. Di Vincenzo, R. La Scala,  {\em  Block-triangular matrix algebras and factorable ideals of graded polynomial identities},  J. Algebra {\bf 279} (2004), 260--279.
		
		\bibitem{Formanek} E. Formanek, {\em Central polynomials for matrix rings}, J. Algebra {\bf 23} (1972), 129--132.
		
		
		\bibitem{GL} A. Giambruno, D. La Mattina, {\em Graded polynomial identities and codimensions: computing the exponential growth}, Adv. Math. {\bf 225} (2010), 859–881.
		
		
		\bibitem{GLP} A. Giambruno, D. La Mattina, C. Polcino Milies, {\em  On almost polynomial growth of proper central polynomials}, Proc. Amer. Math. Soc. {\bf 152} (2024), 4569--4584.
		
		
		
		\bibitem{GR} A. Giambruno, A. Regev,
		{\em Wreath products and P.I. algebras},   J. Pure Appl. Algebra {\bf 35} (1985), 133--149.
		
		\bibitem{GZ1} A. Giambruno,  M. Zaicev, {\em  On Codimension growth of finitely generated associative algebras},
		Adv. Math. {\bf 140} (1998), 145--155.
		
		\bibitem{GZ2} A. Giambruno,  M. Zaicev, {\em Exponential codimension growth of P.I. algebras: An exact estimate}, Adv. Math. {\bf 142} (1999), 221--243.
		
		\bibitem{GZ2003AM} A. Giambruno, M. Zaicev,
		{\em Minimal varieties of algebras of exponential growth},
		Adv. Math. {\bf 174} (2003), 310–323.
		
		\bibitem{GZbook} A. Giambruno, M. Zaicev,
Polynomial Identities and Asymptotic Methods, AMS, Mathematical
Surveys and Monographs Vol. {\bf 122}, Providence, R.I., 2005.
\bibitem{GZ2018} A. Giambruno, M. Zaicev,
		{\em Central polynomials and growth functions},  Israel J. Math. {\bf 226} (2018), 15--28.
		
		\bibitem{GZ2019} A. Giambruno, M. Zaicev,
		{\em Central polynomials of associative algebras and their growth}, Proc. Amer. Math. Soc.  {\bf 147}, Number 3 (2019),  909–919.
		
		
		\bibitem{IM} A. Ioppolo,  F. Martino, {\em Classifying G-graded algebras of exponent two}, Isr. J. Math. {\bf 229} (2019), 341–356.
		
		
		\bibitem{Ka} Y. Karasik, {\em Kemer’s theory for H-module algebras with application to the PI exponent},
		J. Algebra {\bf 457} (2016), 194--227.
		
		\bibitem{kemer} A. R. Kemer, Ideals of Identities of
		Associative Algebras, AMS Translations of Mathematical Monograph,
		Vol. {\bf 87}, 1988.
		
		\bibitem{LMR}  D. La Mattina, F. Martino, C. Rizzo, {\em Central polynomials of graded algebras: capturing their exponential growth}, J. Algebra {\bf 600}  (2022), 45--70.
		
		\bibitem{Razmyslov} Ju. P. Razmyslov, {\em A certain problem of Kaplansky}, (Russian)
		Izv. Akad. Nauk SSSR Ser. Mat. {\bf 37} (1973), 483--501.
		
		\bibitem{SV} D. Stefan, F. Van Oystaeyen, {\em The Wedderburn–Malcev theorem for comodule algebras}, Commun. Algebra {\bf 27} (1999), 3569--3581.
		
		\bibitem{Sv} I. Sviridova, {\em Identities of PI-algebras graded by a finite Abelian group}, Commun. Algebra {\bf 39} (2011), 3462--3490.
		
		\bibitem{V} A. Valenti, {\em Group graded algebras and almost polynomial growth}, J. Algebra {\bf 334} (2011), 247--254.
		
	\end{thebibliography}
\end{document}